\providecommand{\algorithmname}{Algorithm}
\newtheorem{theorem}{Theorem}[section]
\newtheorem{lem}{Lemma}[section]
\newtheorem{rem}{Remark}[section]
\newtheorem{prop}{Proposition}[section]
\newtheorem{cor}{Corollary}[section]
\newcounter{hypA}
\newcounter{hypB}
\newcounter{hypD}
\newenvironment{hypD}{\refstepcounter{hypD}\begin{itemize}
 \item[({\bf D\arabic{hypD}})]}{\end{itemize}}
\date{}
\begin{document}

\begin{center}

{\Large \textbf{Multilevel Particle Filters for the Non-Linear Filtering Problem in Continuous Time}}

\vspace{0.5cm}

BY AJAY JASRA$^{1}$, FANGYUAN YU$^{1}$ \& JEREMY HENG$^{2}$

{\footnotesize $^{1}$Computer, Electrical and Mathematical Sciences and Engineering Division, King Abdullah University of Science and Technology, Thuwal, 23955, KSA.}
{\footnotesize E-Mail:\,} \texttt{\emph{\footnotesize ajay.jasra@kaust.edu.sa, fangyuan.yu@kaust.edu.sa}}\\
{\footnotesize $^{2}$ESSEC Business School, Singapore, 139408, SG.}
{\footnotesize E-Mail:\,} \texttt{\emph{\footnotesize b00760223@essec.edu}}
\end{center}

\begin{abstract}
In the following article we consider the numerical approximation of the non-linear filter in continuous-time, where
the observations and signal follow diffusion processes.
Given access to high-frequency, but discrete-time observations, we resort to a first order time discretization of the non-linear filter, followed by
an Euler discretization of the signal dynamics. In order to approximate the associated discretized non-linear filter, one can use a particle filter. Under assumptions, this
can achieve a mean square error of $\mathcal{O}(\epsilon^2)$, for $\epsilon>0$ arbitrary, such that the associated cost is $\mathcal{O}(\epsilon^{-4})$.
We prove, under assumptions, that the multilevel particle filter of \cite{mlpf} can achieve a mean square error of $\mathcal{O}(\epsilon^2)$, for cost $\mathcal{O}(\epsilon^{-3})$.
This is supported by numerical simulations in several examples.\\
\noindent \textbf{Key words}: Multilevel Monte Carlo, Particle Filters, Non-Linear Filtering.
\end{abstract}

\section{Introduction}

The non-linear filtering problem in continuous-time is found in many applications in finance, economics and engineering; see e.g.~\cite{crisan_bain}.
We consider the case where one seeks to filter an unobserved diffusion process (the signal) with access to an observation trajectory that is, in theory, continuous in time and
following a diffusion process itself. The non-linear filter is the solution to the Kallianpur-Striebel formula (e.g.~\cite{crisan_bain}) and typically has no analytical solution.
This has led to a substantial literature on the numerical solution of the filtering problem; see for instance \cite{crisan_bain,delmoral}.

In practice, one has access to very high-frequency observations, but not an entire trajectory and this often means one has to time discretize the functionals associated
to the path of the observation and signal. This latter task can be achieved by using the approach in \cite{picard}, which is the one used in this article, but improvements exist; see for instance \cite{crisan_ortiz1,crisan_ortiz2}.
Even under such a time-discretization, such a filter is not available analytically, for most problems of practical interest. From here one must often discretize the dynamics of the signal (such as Euler), which in essence leads to a high-frequency discrete-time non-linear filter.
This latter object can be approximated using particle filters in discrete time, as in, for instance, \cite{crisan_bain}; this is the approach followed in this article.
Alternatives exist, such as unbiased methods \cite{fearn} and integration-by-parts, change of variables along with Feynman-Kac particle methods \cite{delmoral}, but, each of these schemes has its advantages and pitfalls versus the one followed in this paper. We refer to e.g.~\cite{crisan_ortiz2} for some discussion.

Particle filters generate $N$ samples (or particles) in parallel and sequentially approximate non-linear filters using sampling and resampling. The algorithms are very well understood mathematically; see for instance \cite{delmoral} and the references therein.
Given the particle filter approximation of the time-discretized filter, using an Euler method for the signal, one can expect that to obtain a mean squared error (MSE), relative to the true filter, of $\mathcal{O}(\epsilon^2)$, for $\epsilon>0$ arbitrary, the associated cost is $\mathcal{O}(\epsilon^{-4})$. This follows from standard results on discretizations and
particle filters. In a related context of regular, discrete time observations and dynamics, with the signal following a diffusion, \cite{mlpf} (see also \cite{mlpf_new}) show that when the MSE for a particle filter is $\mathcal{O}(\epsilon^2)$, the cost is $\mathcal{O}(\epsilon^{-3})$
and one can improve particle filters using the multilevel Monte Carlo (MLMC) method \cite{giles,giles1,hein}, as we now explain. 

MLMC is an approach which can help to approximate expectations w.r.t.~probability measures that are induced by discretizations, such as an Euler method. The idea is to create a telescoping sum representation of an expectation w.r.t.~an accurate discretization and interpolate with differences of expectations of increasingly coarse (in terms of the discretization) probability measures.
Then, if one can sample from appropriate couplings of the pairs of probability measures in the differences of the expectations, one can reduce the computational effort to achieve a given MSE. In the case of \cite{mlpf}, one can achieve a MSE $\mathcal{O}(\epsilon^2)$, for cost $\mathcal{O}(\epsilon^{-2.5})$ for a class of processes.

In this paper we apply the methodology of \cite{mlpf}, which combines particle filters with the MLMC methodology (termed the multilevel particle filter), to the non-linear filtering problem in continuous-time. The main issue is that in order to mathematically understand the application of this methodology to this new context, several new results are required. The main difference to the case of \cite{mlpf}, other than the processes involved, is the fact that
one averages over the data in the analysis of filters in continuous-time. This requires one to analyze the properties of several time-discretized Feynman-Kac semigroups, in order to verify the mathematical improvements of the approach (see also \cite{stolz}). Under assumptions, we prove that to achieve a MSE of $\mathcal{O}(\epsilon^2)$, one requires a cost of $\mathcal{O}(\epsilon^{-3})$.
This is verified in several numerical examples. We remark that the mathematical results are of interest beyond the context of this article, for instance, unbiased estimation; see \cite{glynn} for example.

This article is structured as follows. In Section \ref{sec:problem} we formalize the problem of interest.
Our approach is detailed in Section \ref{sec:mlpf}. The theoretical results are presented in Section \ref{sec:theory} and illustrated numerically in Section \ref{sec:numerics}. 
The proofs of our theoretical results are housed in the appendix.

\section{Problem}\label{sec:problem}
We introduce some notations in Section \ref{sec:notation} and continuous time non-linear filtering in Section \ref{sec:model}. 
Time-discretization of the non-linear filters is considered in Section \ref{sec:discrete_model}, followed by a discussion of multilevel estimation in this context
in Section \ref{sec:ml_estimation}.

\subsection{Notations}\label{sec:notation}

Let $(\mathsf{X},\mathcal{X})$ be a measurable space.
For $\varphi:\mathsf{X}\rightarrow\mathbb{R}$ we write $\mathcal{B}_b(\mathsf{X})$ as the collection of bounded measurable functions. 
Let $\varphi:\mathbb{R}^d\rightarrow\mathbb{R}$, $\textrm{Lip}_{\|\cdot\|_2}(\mathbb{R}^{d})$ denotes the collection of real-valued functions that are Lipschitz w.r.t.~$\|\cdot\|_2$ ($\|\cdot\|_p$ denotes the $\mathbb{L}_p-$norm of a vector $x\in\mathbb{R}^d$). That is, $\varphi\in\textrm{Lip}_{\|\cdot\|_2}(\mathbb{R}^{d})$ if there exists a $C<+\infty$ such that for any $(x,y)\in\mathbb{R}^{2d}$
$$
|\varphi(x)-\varphi(y)| \leq C\|x-y\|_2.
$$
We write $\|\varphi\|_{\textrm{Lip}}$ as the Lipschitz constant of a function $\varphi\in\textrm{Lip}_{\|\cdot\|_2}(\mathbb{R}^{d})$.
For $\varphi\in\mathcal{B}_b(\mathsf{X})$, we write the supremum norm $\|\varphi\|=\sup_{x\in\mathsf{X}}|\varphi(x)|$.
$\mathcal{P}(\mathsf{X})$  denotes the collection of probability measures on $(\mathsf{X},\mathcal{X})$.
For a measure $\mu$ on $(\mathsf{X},\mathcal{X})$
and a function $\varphi\in\mathcal{B}_b(\mathsf{X})$, the notation $\mu(\varphi)=\int_{\mathsf{X}}\varphi(x)\mu(dx)$ is used. 
$B(\mathbb{R}^d)$ denote the Borel sets on $\mathbb{R}^d$. $dx$ is used to denote the Lebesgue measure.
For $(\mathsf{X}\times\mathsf{Y},\mathcal{X}\vee\mathcal{Y})$ a measurable space and $\mu$ a non-negative measure on this space,
we use the tensor-product of functions notation for $(\varphi,\psi)\in\mathcal{B}_b(\mathsf{X})\times\mathcal{B}_b(\mathsf{X})$,
$\mu(\varphi\otimes\psi)=\int_{\mathsf{X}\times\mathsf{Y}}\varphi(x)\psi(y)\mu(d(x,y))$.
If $K:\mathsf{X}\times\mathcal{X}\rightarrow[0,\infty)$ is a non-negative operator and $\mu$ is a measure, we use the notations
$
\mu K(dy) = \int_{\mathsf{X}}\mu(dx) K(x,dy)
$
and for $\varphi\in\mathcal{B}_b(\mathsf{X})$, 
$
K(\varphi)(x) = \int_{\mathsf{X}} \varphi(y) K(x,dy).
$
For $A\in\mathcal{X}$, the indicator function is written as $\mathbb{I}_A(x)$.
$\mathcal{N}_s(\mu,\Sigma)$ (resp.~$\psi_s(x;\mu,\Sigma)$)
denotes an $s-$dimensional Gaussian distribution (density evaluated at $x\in\mathbb{R}^s$) of mean $\mu$ and covariance $\Sigma$. If $s=1$ we omit the subscript $s$. For a vector/matrix $X$, $X^*$ is used to denote the transpose of $X$.
For $A\in\mathcal{X}$, $\delta_A(du)$ denotes the Dirac measure of $A$, and if $A=\{x\}$ with $x\in \mathsf{X}$, we write $\delta_x(du)$. 
For a vector-valued function in $d-$dimensions (resp.~$d-$dimensional vector), $\varphi(x)$ (resp.~$x$) say, we write the $i^{\textrm{th}}-$component ($i\in\{1,\dots,d\}$) as $\varphi^{(i)}(x)$ (resp.~$x^{(i)}$). For a $d\times q$ matrix $x$, we write the $(i,j)^{\textrm{th}}-$entry as $x^{(ij)}$.
For $\mu\in\mathcal{P}(\mathsf{X})$ and $X$ a random variable on $\mathsf{X}$ with distribution associated to $\mu$, we use the notation $X\sim\mu(\cdot)$. For a finite set $A\in\mathcal{X}$, we write $\textrm{Card}(A)$ as the cardinality of $A$.

\subsection{Model}\label{sec:model}

Let $(\Omega,\mathcal{F})$ be a measurable space. On $(\Omega,\mathcal{F})$
consider the probability measure $\mathbb{P}$ and a pair of stochastic processes $\{Y_t\}_{t\geq 0}$, $\{X_t\}_{t\geq 0}$, with $Y_t\in\mathbb{R}^{d_y}$, $X_t\in\mathbb{R}^{d_x}$ for $(d_{y},d_x)\in\mathbb{N}^2$, and with $X_0=x_*\in\mathbb{R}^{d_x}$ given:
\begin{eqnarray}
dY_t & = & h(X_t)dt + dB_t \label{eq:obs}\\
dX_t & = & b(X_t)dt + \sigma(X_t)dW_t \label{eq:state}
\end{eqnarray}
where $h:\mathbb{R}^{d_x}\rightarrow\mathbb{R}^{d_y}$, $b:\mathbb{R}^{d_x}\rightarrow\mathbb{R}^{d_x}$, $\sigma:\mathbb{R}^{d_x}\rightarrow\mathbb{R}^{d_x\times d_x}$ with $\sigma$ non-constant and of full rank and $\{B_t\}_{t\geq 0}, \{W_t\}_{t\geq 0}$
are independent standard Brownian motions of dimension $d_y$ and $d_x$ respectively. 
The structure of the model is the `standard one' in non-linear filtering (e.g.~\cite{crisan_bain}) and we will restrict ourselves to this form. 
To minimize certain technical difficulties, the following assumption is made throughout the paper:
\begin{hypD}\label{hyp_diff:1}
We have:
\begin{enumerate}
\item{$\sigma^{(ij)}$ is bounded with $\sigma^{(ij)}\in\textrm{Lip}_{\|\cdot\|_2}(\mathbb{R}^{d_x})$, for all $(i,j)\in\{1,\dots,d_x\}^2$ and $a(x):=\sigma(x)\sigma(x)^*$ is uniformly elliptic for all $x\in\mathbb{R}^{d_x}$.}
\item{$(h^{(i)},b^{(j)})$ are bounded and $(h^{(i)},b^{(j)})\in\textrm{Lip}_{\|\cdot\|_2}(\mathbb{R}^{d_x})\times\textrm{Lip}_{\|\cdot\|_2}(\mathbb{R}^{d_x})$, for all $(i,j)\in\{1,\dots,d_y\}\times\{1,\dots,d_x\}$.}
\end{enumerate}
\end{hypD}


Let $\{\mathcal{F}_t\}_{t\geq 0}$ be a filtration on $\mathcal{F}$ such that $\{B_t\}_{t\geq 0}$ and $\{W_t\}_{t\geq 0}$
are independent standard Brownian motions. Let $T>0$ be an arbitrary positive real number 
and introduce the probability measure $\overline{\mathbb{P}}$, which is equivalent to $\mathbb{P}$ on $\mathcal{F}_T$, defined by the Radon-Nikodym derivative 
$$
Z_{T}:=\frac{d\mathbb{P}}{d\overline{\mathbb{P}}} = \exp\Big\{\int_{0}^T h(X_s)^*dY_s - \frac{1}{2}\int_{0}^Th(X_s)^*h(X_s)ds\Big\}.
$$
Under $\overline{\mathbb{P}}$, $\{X_t\}_{t\in[0,T]}$ follows the dynamics \eqref{eq:state} and independently $\{Y_t\}_{t\in [0,T]}$ is a standard Brownian motion. 
The solution to the Zakai equation is given by
$$
\gamma_{t}(\varphi) := \overline{\mathbb{E}}\Big[\varphi(X_t)\exp\Big\{\int_{0}^t h(X_s)^*dY_s - \frac{1}{2}\int_{0}^th(X_s)^*h(X_s)ds\Big\}\Big|\mathcal{Y}_t\Big]
$$
for $\varphi\in\mathcal{B}_b(\mathbb{R}^{d_x})$, where $\mathcal{Y}_t$ is the filtration generated by the process $\{Y_s\}_{0\leq s \leq t}$. Our objective is to, recursively in time, estimate the filter
$$
\eta_t(\varphi) := \frac{\gamma_{t}(\varphi)}{\gamma_{t}(1)}
$$
for $\varphi\in\mathcal{B}_b(\mathbb{R}^{d_x})$.

\subsection{Discretized Model}\label{sec:discrete_model}

For the rest of the article, we will fix a time horizon $T\in\mathbb{N}$.
In practice, we will have to work with a discretization of the model in \eqref{eq:obs}-\eqref{eq:state} for the following reasons.
\begin{enumerate}
\item{One only has access to a finite, but possibly very high frequency data.}
\item{$Z_{T}$ is typically unavailable analytically.}
\item{There may not be a non-negative and unbiased estimator of the transition densities induced by the model \eqref{eq:obs}-\eqref{eq:state}.}
\end{enumerate}
We will assume access to a path of data $\{Y_t\}_{0\leq t \leq T}$ which is observed at a high frequency, as mentioned above. 

Let $l\in\mathbb{N}_0$ denote a given level and consider an Euler discretization of step-size $\Delta_l=2^{-l}$:
\begin{eqnarray}
\widetilde{X}_{k\Delta_l} & = & \widetilde{X}_{(k-1)\Delta_l} + b(\widetilde{X}_{(k-1)\Delta_l})\Delta_l + \sigma(\widetilde{X}_{(k-1)\Delta_l})[W_{k\Delta_l}-W_{(k-1)\Delta_l}]\label{eq:disc_state}
\end{eqnarray}
for $k\in\{1,2,\dots,2^lT\}$, initialized at $\widetilde{X}_{0}=x_*$.
It should be noted that the Brownian motion in \eqref{eq:disc_state} is the same as in \eqref{eq:state} under both $\mathbb{P}$ and $\overline{\mathbb{P}}$.
Then, for $k\in\mathbb{N}_0$, we define the potential functions
$$
G_{k}^l(x_{k\Delta_l}) := \exp\Big\{h(x_{k\Delta_l})^*(y_{(k+1)\Delta_l}-y_{k\Delta_l})-\frac{\Delta_l}{2}h(x_{k\Delta_l})^*h(x_{k\Delta_l})\Big\}
$$
and note that their product over time
$$
Z_{T}^l(x_0,x_{\Delta_l},\dots,x_{T-\Delta_l}) := \prod_{k=0}^{2^lT-1}G_{k}^l(x_{k\Delta_l}) = \exp\Big\{\sum_{k=0}^{2^lT-1}\Big[h(x_{k\Delta_l})^*(y_{(k+1)\Delta_l}-y_{k\Delta_l})-\frac{\Delta_l}{2}h(x_{k\Delta_l})^*h(x_{k\Delta_l})\Big]\Big\}
$$
is simply a discretization of $Z_{T}$ (of the type of \cite{picard}).  
We can then define the time-discretized filter at time $t\in\mathbb{N}$ as
\begin{eqnarray}
\gamma_{t}^l(\varphi) & := & \overline{\mathbb{E}}\big[\varphi(X_t)Z_{t}^l(\widetilde{X}_0,\widetilde{X}_{\Delta_l},\dots,\widetilde{X}_{t-\Delta_l})|\mathcal{Y}_t\big] \notag\\
\eta_{t}^l(\varphi) & := & \frac{\gamma_{t}^l(\varphi)}{\gamma_{t}^l(1)}\label{eq:discretetime_filter}
\end{eqnarray}
where $\varphi\in\mathcal{B}_b(\mathbb{R}^{d_x})$, and for notational convenience we set $\eta_0^l(dx) := \delta_{x_{*}}(dx)$. 
To define the time-discretized filter between unit times, for $(p,t)\in \mathbb{N}_0\times\{\Delta_l,2\Delta_l,\dots,1-\Delta_l\}$, we set
\begin{eqnarray*}
\gamma_{p+t}^l(\varphi) & := & \overline{\mathbb{E}}\Big[\varphi(X_{p+t})Z_{p}^l(\widetilde{X}_0,\widetilde{X}_{\Delta_l},\dots,\widetilde{X}_{p-\Delta_l})\Big(\prod_{k=0}^{t\Delta_l^{-1}-1}G_{p\Delta_l^{-1}+k}^l(\widetilde{X}_{p+k\Delta_l^{-1}})\Big)\Big|\mathcal{Y}_{p+t}\Big] \\
\eta_{p+t}^l(\varphi) & := & \frac{\gamma_{p+t}^l(\varphi)}{\gamma_{p+t}^l(1)}
\end{eqnarray*}
where $\varphi\in\mathcal{B}_b(\mathbb{R}^{d_x})$ and $Z_{0}^l(x_{-\Delta_l})=1$. We note that the approach that we will consider is not constrained to the particular choice of $\Delta_l$; the one
chosen here is the one often used in previous works on MLMC and diffusions and we will continue with this convention. We also remark, as mentioned in the introduction,
that more precise discretization schemes exist and improvements in the approximation of $Z_T$ are possible, but not considered.

We now detail the sequential structure of the time-discretized filter \eqref{eq:discretetime_filter} that will facilitate its numerical approximation in Section \ref{sec:particle_filter}.
We write a path under discretization level $l$ on a unit time interval starting from $p\in\mathbb{N}_0$ as
$$
u_p^l := (x_{p},x_{p+\Delta_l},\dots,x_{p+1}) \in (\mathbb{R}^{d_x})^{\Delta_l^{-1}+1} =: E_l.
$$
For $\varphi\in\mathcal{B}_b(\mathbb{R}^{d_x})$, we define its extension on $E_l$, $\pmb{\varphi}^l:E_l\rightarrow\mathbb{R}$ by
$$
\pmb{\varphi}^l(x_0,x_{\Delta_l},\dots,x_{1}) := \varphi(x_1).
$$
We denote the product of potential functions on a time-discretized unit interval starting from $p\in\mathbb{N}_0$ as
$$
\mathbf{G}_p^l(u_p^l) := \prod_{k=0}^{\Delta_l^{-1}-1}G_{p\Delta_l^{-1}+k}^l(x_{p+k\Delta_l}).
$$
Let $M^l:\mathbb{R}^{d_x}\rightarrow\mathcal{P}(E_l)$ denote the joint Markov transition of a path $(x_0,x_{\Delta_l},\dots,x_{1})$ defined by the Euler discretization \eqref{eq:disc_state} with initialization $x\in\mathbb{R}^{d_x}$, i.e. for $\varphi\in\mathcal{B}_b(E_l)$, we have
$$
M^l(\varphi)(x) := \int_{E_l}\varphi(x_0,x_{\Delta_l},\dots,x_{1})\delta_x(dx_0)\Big[\prod_{k=1}^{\Delta_l^{-1}}\psi_{d_x}(x_{k\Delta_l};x_{(k-1)\Delta_l} + b(x_{(k-1)\Delta_l})\Delta_l ,a(x_{(k-1)\Delta_l})\Delta_l)\Big]d(x_{\Delta_l},\dots,x_{1}).
$$
For $p\in\mathbb{N}$, define the (prediction) operator $\Phi_p^l:\mathcal{P}(E_l)\rightarrow\mathcal{P}(E_l)$ as
\begin{equation}\label{eq:phi_rec}
\Phi_p^l(\mu)(\varphi) :=  \frac{\mu(\mathbf{G}_{p-1}^l\mathbf{M}^l(\varphi))}{\mu(\mathbf{G}_{p-1}^l)}
\end{equation}
for $(\mu,\varphi)\in\mathcal{P}(E_l)\times\mathcal{B}_b(E_l)$ (see e.g. \cite{delmoral2004} for more motivation and properties of this operator). 
To clarify notation in the preceding equation, we note that 
$$\mu(\mathbf{G}_{p-1}^l\mathbf{M}^l(\varphi))= \int_{E_l}\mu(d(x_{p-1},x_{p-1+\Delta_l},\dots,x_{p}))\mathbf{G}_{p-1}^l(x_{p-1},x_{p-1+\Delta_l},\dots,x_{p-\Delta_l})M^l(\varphi)(x_p).$$
The predictor can be defined as
$$
\pi_p^l(\varphi) := \int_{\mathbb{R}^{d_x}}\eta_{p}^l(dx) \int_{E_l} M^l(x,du)\varphi(u)
$$
for $(p,\varphi)\in\mathbb{N}_0\times\mathcal{B}_b(E_l)$, and can be shown to satisfy the recursion
$$
\pi_p^l(\varphi) = \Phi_p^l(\pi_{p-1}^l)(\varphi)
$$
for $p\in\mathbb{N}$. Lastly, note that one can also write the time-discretized filter at time $t\in\mathbb{N}$ in terms of the predictor at time $t-1$ using the Bayes' update
\begin{equation}\label{eq:pred_up}
\eta_t^l(\varphi) = \frac{\pi_{t-1}^l(\mathbf{G}_{t-1}^l\pmb{\varphi}^l)}{\pi_{t-1}^l(\mathbf{G}_{t-1}^l)}
\end{equation}
for $\varphi\in\mathcal{B}_b(\mathbb{R}^{d_x})$.

\subsection{Multilevel Estimation}\label{sec:ml_estimation}
In multilevel estimation, the basic idea is to approximate the following identity
\begin{equation}\label{eq:ml_identity}
\eta_{t}^{L}(\varphi) = \eta_{t}^{0}(\varphi) + \sum_{l=1}^L\Big([\eta_{t}^{l}-\eta_{t}^{l-1}](\varphi)\Big)
\end{equation}
for $\varphi\in\mathcal{B}_b(\mathbb{R}^{d_x})$, where $L\in\mathbb{N}$ denotes a given number of levels. 
The main objective is to construct a simulation method which can approximate the differences in the summands 
$[\eta_{t}^{l}-\eta_{t}^{l-1}](\varphi)$ in a dependent manner, so as to reduce the variance relative to approximating $\eta_{t}^{L}(\varphi)$ directly. The details are discussed in many articles, such as \cite{giles,giles1,cpf_clt,mlpf}.

The highest level $L$ is typically chosen to target a specific bias and this is the strategy considered here. In our context, there is a potential complication as $L$ also determines the level frequency of the data that are used - this is discussed below. 

In Section \ref{sec:particle_filter}, we will introduce particle filters to obtain Monte Carlo approximations of the filtering expectation $\eta_{t}^{0}(\varphi)$ at level $0$. In Section \ref{sec:cpf}, we then describe how to approximate the summands $[\eta_{t}^{l}-\eta_{t}^{l-1}](\varphi)$ 
for levels $l\in\{1,\ldots,L\}$ using a specific coupling of the particle filters at levels $l$ and $l-1$. Assuming that these coupled particle filters are simulated independently for each successive level, the resulting methodology is the multilevel particle filter \cite{mlpf}. 
The motivations and properties of this algorithm are studied in \cite{cpf_clt} and we refer the reader to that article for more details.

\section{Methodology}\label{sec:mlpf}

To simplify notation, throughout this section, we omit the $\widetilde{\cdot}$ notation from $X$ for the Euler discretization. 
This section concerns particle filtering approximations.
In Section \ref{sec:particle_filter}, we introduce particle filters and their approximation of time-discretized filtering expectations. 
Section \ref{sec:cpf} then details a coupling of particle filters between successive levels and its approximation of differences of time-discretized filtering expectations. 
Section \ref{sec:ml_est} ends this section by building on the previous two subsections to describe the multilevel particle filter and its multilevel estimator.

\subsection{Particle Filter}\label{sec:particle_filter}

We now consider approximating the time-discretized filter $\eta_{t}^l(\varphi)$ for some given level $l\in\mathbb{N}_0$ using a particle filter. Recall that the filtering expectation $\eta_t^0(\varphi)$ at level $l=0$ is needed to approximate the multilevel identity in \eqref{eq:ml_identity}. 
The objective of the particle filter (PF) is to provide an approximation of the formulae \eqref{eq:phi_rec} and \eqref{eq:pred_up}.
For a given $N\in\mathbb{N}$, the particle filter generates a system of random variables on $(E_l^N)^{n+1}$ at a time $n\in\mathbb{N}_0$ according to the probability measure
$$
\mathbb{Q}(d(u_0^{l,1:N},\dots,u_n^{l,1:N})) = \Big(\prod_{i=1}^N M^l(x_{*},du_0^{l,i})\Big)\prod_{i=1}^N\prod_{p=1}^n \Phi_p^l(\pi_{p-1}^{l,N})(du_p^{l,i})
$$
where for $\varphi\in\mathcal{B}_b(E_l)$
\begin{equation}\label{eq:empirical_measure_pred}
\pi_{p-1}^{l,N}(\varphi) := \frac{1}{N}\sum_{i=1}^N \varphi(u_{p-1}^{l,i})
\end{equation}
denotes the empirical measure at time $p-1$.
An algorithmic description of the particle filter is given in Algorithm \ref{alg:pf}. For $(t,\varphi)\in\mathbb{N}\times\mathcal{B}_b(\mathbb{R}^{d_x})$ one can approximate the time-discretized filter $\eta_{t}^l(\varphi)$ corresponding to step-size $\Delta_l$ via
\begin{equation}\label{eq:pf_est}
\eta_{t}^{l,N}(\varphi) := \frac{\pi_{t-1}^{l,N}(\mathbf{G}_{t-1}^l\pmb{\varphi}^l)}{\pi_{t-1}^{l,N}(\mathbf{G}_{t-1}^l)},
\end{equation}
which can be seen as a particle approximation of \eqref{eq:pred_up} using the empirical measure \eqref{eq:empirical_measure_pred} with $N$ particles.
For $(l,p,t,\varphi)\in\mathbb{N}\times\mathbb{N}_0\times\{\Delta_l,2\Delta_l,\dots,1-\Delta_l\}\times\mathcal{B}_b(\mathbb{R}^{d_x})$ one can also obtain a particle approximation of the time-discretized filter at time $p+t$, $\eta_{p+t}^l(\varphi)$, using
$$
\eta_{p+t}^{l,N}(\varphi) := \frac{\sum_{i=1}^N\Big(\prod_{k=0}^{t\Delta_l^{-1}-1}G_{p\Delta_l^{-1}+k}^l(x_{p+k\Delta_l}^{l,i})\Big)\varphi(x_{p+t}^{l,i})}{\sum_{i=1}^N\prod_{k=0}^{t\Delta_l^{-1}-1}G_{p\Delta_l^{-1}+k}^l(x_{p+k\Delta_l}^{l,i})}.
$$

\begin{algorithm}
\begin{enumerate}
\item{Initialize: For $i\in\{1,\dots,N\}$, generate $u_0^{l,i}$ from $M^l(x_{*},\cdot)$. Set $p=1$.}
\item{Update: For $i\in\{1,\dots,N\}$, generate $u_p^{l,i}$ from $\Phi_p^l(\pi_{p-1}^{l,N})(\cdot)$. Set $p=p+1$ and return to the start of 2.}
\end{enumerate}
\caption{Particle Filter.}
\label{alg:pf}
\end{algorithm}

\subsection{Coupled Particle Filter}\label{sec:cpf}
In this section, we describe how to approximate the summands $[\eta_{t}^{l}-\eta_{t}^{l-1}](\varphi)$ for levels $l\in\{1,\ldots,L\}$ 
in the multilevel identity \eqref{eq:ml_identity} using a specific coupling of particle filters at levels $l$ and $l-1$.
For $(l,\varphi)\in\mathbb{N}\times\mathcal{B}_b(\mathbb{R}^{d_x}\times\mathbb{R}^{d_x})$ (resp.~$(l,\varphi)\in\mathbb{N}\times\mathcal{B}_b(\mathbb{R}^{4d_x})$), we define $\pmb{\varphi}^l:E_l\times E_{l-1}\rightarrow\mathbb{R}$ (resp.~$\pmb{\varphi}^l:(E_l\times E_{l-1})^2\rightarrow\mathbb{R}$)
$$
\pmb{\varphi}^l\big((x_0,x_{\Delta_l},\dots,x_{1}),(x_0',x_{\Delta_{l-1}}',\dots,x_{1}')\big) := \varphi(x_1,x_1')
$$
(resp.~$\pmb{\varphi}^l\big((x_0,x_{\Delta_l},\dots,x_{1}),(x_0',x_{\Delta_{l-1}}',\dots,x_{1}'),(v_0,v_{\Delta_{l}},\dots,v_{1}),(v_0',v_{\Delta_{l-1}}',\dots,v_{1}')\big) := \varphi(x_1,x_1',v_1,v_1')$).
The following exposition closely follows \cite{cpf_clt}, with modifications to the context here.
Let $\check{P}^l:\mathbb{R}^{d_x}\times\mathbb{R}^{d_x}\rightarrow\mathcal{P}((\mathbb{R}^{d_x})^{\Delta_l^{-1}}\times \mathbb{R}^{d_x})^{\Delta_{l-1}^{-1}})$ be a Markov kernel, for paths $(x_{\Delta_l},\dots,x_1)$ and $(x_{\Delta_{l-1}}',\dots,x_{1}')$
constructed by using the same Brownian increments in the discretization \eqref{eq:disc_state} (see e.g.~\cite{giles} or \cite[Section 3.3]{mlpf_nc}).
Let $\check{M}^l:\mathbb{R}^{d_x}\times\mathbb{R}^{d_x}\rightarrow\mathcal{P}(E_l\times E_{l-1})$ be a Markov kernel defined for $(u,v,\varphi)\in\mathbb{R}^{d_x}\times\mathbb{R}^{d_x}\times\mathcal{B}_b(E_l\times E_{l-1})$ as
$$
\check{M}^l(\varphi)\Big((u,v)\Big) := \int_{E_l\times E_{l-1}} \varphi(u^l,u^{l-1})\delta_u(dx_0^l)\delta_v(dx_0^{l-1})\check{P}^l\Big((x_0^l,x_0^{l-1}),d((x_{\Delta_l}^l,\dots,x_1^l),(x_{\Delta_{l-1}}^{l-1},\dots,x_1^{l-1}))\Big).
$$
Note that for any $(u,v)\in\mathbb{R}^{d_x}\times\mathbb{R}^{d_x}$, $(A,B)\in B(E_l)\vee B(E_{l-1})$
$$
\check{M}^l\Big((u,v),A\times E_{l-1}\Big) = M^l(u,A)\quad\textrm{and}\quad
\check{M}^l\Big((u,v'),E_{l}\times B\Big) = M^{l-1}(v,B),
$$
i.e. $\check{M}^l$ is a coupling of $M^l$ and $M^{l-1}$ using common Brownian increments in the discretization scheme \eqref{eq:disc_state}.

We now describe a construction to couple the resampling steps for the particle filters at levels $l$ and $l-1$; this was considered in \cite{mlpf} and is based on a maximal coupling of the resampling indices to promote the sampling of common ancestors.
Let $(p,\varphi,\mu)\in\mathbb{N}\times \mathcal{B}_b(E_l\times E_{l-1})\times\mathcal{P}(E_{l}\times E_{l-1})$ and define the probability measure:
\begin{eqnarray}
\check{\Phi}_p^l(\mu)(\varphi) & := & \mu\Big(\{F_{p-1,\mu,l} \wedge F_{p-1,\mu,l-1}\} \mathbf{\check{M}}^l(\varphi)\Big)
+ \Big(1-
\mu\Big(\{F_{p-1,\mu,l} \wedge F_{p-1,\mu,l-1}\}\Big)
\Big) \times \nonumber \\ & & 
(\mu\otimes\mu)\Big(\Big\{\overline{F}_{p-1,\mu,l}\otimes \overline{F}_{p-1,\mu,l-1}\Big\}\mathbf{\bar{M}}^l(\varphi)\Big) \label{eq:phi_rec_cpf}
\end{eqnarray}
where for $(u,v)\in E_l\times E_{l-1}$
\begin{eqnarray*}
\overline{F}_{p-1,\mu,l}(u,v) &  = & \frac{F_{p-1,\mu,l}(u,v)-\{F_{p-1,\mu,l}(u,v)\wedge F_{p-1,\mu,l-1}(u,v)\}}{
\mu(F_{p-1,\mu,l}-\{F_{p-1,\mu,l}\wedge F_{p-1,\mu,l-1}\})} \\
\overline{F}_{p-1,\mu,l-1}(u,v) &  = & \frac{F_{p-1,\mu,l-1}(u,v)-\{F_{p-1,\mu,l}(u,v)\wedge F_{p-1,\mu,l-1}(u,v)\}}{
\mu(F_{p-1,\mu,l-1}-\{F_{p-1,\mu,l}\wedge F_{p-1,\mu,l-1}\})} 
\end{eqnarray*}
\begin{eqnarray*}
F_{p-1,\mu,l}(u,v) &  = & \check{G}_{p-1,\mu,l}(u)\otimes1 \\
F_{p-1,\mu,l-1}(u,v) &  = & 1\otimes \check{G}_{p-1,\mu,l-1}(v) \\
\check{G}_{p-1,\mu,l}(u) & = & \frac{\mathbf{G}_{p-1}^l(u)}{\mu(\mathbf{G}_{p-1}^l\otimes 1)} \\
\check{G}_{p-1,\mu,l-1}(v) & = & \frac{\mathbf{G}_{p-1}^{l-1}(v)}{\mu(1\otimes \mathbf{G}^{l-1}_{p-1})} 
\end{eqnarray*}
and for $((u,v),(u',v'))\in (E_l\times E_{l-1})\times (E_l\times E_{l-1})$ and $\varphi\in\mathcal{B}_b(E_l\times E_{l-1})$
$$
\bar{M}^l(\varphi)((u,v),(u',v')) = \check{M}^l(\varphi)(u,v').
$$
Recalling that the process \eqref{eq:state} starts at $x_{*}$, we define for $\varphi\in\mathcal{B}_b(E_l\times E_{l-1})$
$$
\check{\pi}_0^l(\varphi) := \check{M}^l(\varphi)\Big((x_*,x_*)\Big)
$$
and for $(p,\varphi)\in\mathbb{N}\times \mathcal{B}_b(E_l\times E_{l-1})$
\begin{equation}\label{eq:pred_rec_cpf}
\check{\pi}_p^l(\varphi) = \check{\Phi}_p^l(\check{\pi}_{p-1})(\varphi).
\end{equation}
Now it can be shown that (see \cite{mlpf}) that for $(t,\varphi)\in\mathbb{N}\times\mathcal{B}_b(\mathbb{R}^{d_x})$, we have 
\begin{equation}\label{eq:pred_up_cpf}
\eta_t^l(\varphi) = \frac{\check{\pi}_{t-1}^l((\mathbf{G}_{t-1}^l\pmb{\varphi}^l)\otimes 1)}{\check{\pi}_{t-1}^l(\mathbf{G}_{t-1}^l\otimes 1)}\quad\textrm{and}\quad
\eta_t^{l-1}(\varphi) = \frac{\check{\pi}_{t-1}^l(1\otimes (\mathbf{G}_{t-1}^{l-1}\pmb{\varphi}^{l-1}))}{\check{\pi}_{t-1}^l(1\otimes \mathbf{G}_{t-1}^{l-1})},
\end{equation}
i.e. the above coupling construction admits as marginal distributions the time-discretized filters at levels $l$ and $l-1$.
We note that this construction is by no means unique, nor, as discussed in \cite{cpf_clt} for the purposes of multilevel estimation optimal in any sense. When $d_x=1$, an alternative scheme which uses Wasserstein coupling is used in \cite{mlpf_new} (see also \cite{cpf_clt}). This latter
procedure is considered in Section \ref{sec:numerics}, but is not mathematically analyzed.


The objective of the coupled particle filter (CPF) is to provide an approximation of the formulae \eqref{eq:phi_rec_cpf} and \eqref{eq:pred_up_cpf}.
For $p\in\mathbb{N}_0$ set $w_p^l=(u_p^l,\bar{u}_p^{l-1})\in E_l\times E_{l-1}$.
For a given $N\in\mathbb{N}$, a CPF generates a system of random variables on $((E_l\times E_{l-1})^N)^{n+1}$ at a time $n\in\mathbb{N}_0$ according to the probability measure 
$$
\check{\mathbb{Q}}(d(w_0^{l,1:N},\dots,w_n^{l,1:N})) = \Big\{\prod_{i=1}^N \check{M}^l\Big((x_{*},x_{*}),dw_0^{l,i}\Big)\Big\}\prod_{i=1}^N\prod_{p=1}^n \check{\Phi}_p^l(\check{\pi}_{p-1}^{l,N})(dw_p^{l,i})
$$
where for $\varphi\in\mathcal{B}_b(E_l\times E_{l-1})$
\begin{equation}\label{eq:cpf_empirical_measure}
\check{\pi}_{p-1}^{l,N}(\varphi) := \frac{1}{N}\sum_{i=1}^N \varphi(w_{p-1}^{l,i})
\end{equation}
denotes the empirical measure at time $p-1$. To run a CPF, one must understand how to sample from $\check{\Phi}_p^l(\check{\pi}_{p-1}^{l,N})(\cdot)$ which is detailed in Algorithm \ref{alg:bayes_cpf}. An algorithmic description of the CPF is then described in Algorithm \ref{alg:cpf_max_coupling}.
Using a particle approximation of \eqref{eq:pred_up_cpf} with the empirical measure \eqref{eq:cpf_empirical_measure}, 
we can approximate $[\eta_{t}^{l}-\eta_{t}^{l-1}](\varphi)$, $\varphi\in\mathcal{B}_b(\mathbb{R}^{d_x})$ with
\begin{equation}\label{eq:cpf_est}
[\eta_{t}^{l}-\eta_{t}^{l-1}]^N(\varphi) := \frac{\check{\pi}_{t-1}^{l,N}((\mathbf{G}_{t-1}^l\pmb{\varphi}^l)\otimes 1)}{\check{\pi}_{t-1}^{l,N}(\mathbf{G}_{t-1}^l\otimes 1)} - \frac{\check{\pi}_{t-1}^{l,N}(1\otimes (\mathbf{G}_{t-1}^{l-1}\pmb{\varphi}^{l-1}))}{\check{\pi}_{t-1}^{l,N}(1\otimes \mathbf{G}_{t-1}^{l-1})}.
\end{equation}
For $(l,p,t,\varphi)\in\{2,3,\dots\}\times\mathbb{N}_0\times\{\Delta_{l-1},2\Delta_{l-1},\dots,1-\Delta_{l-1}\}\times\mathcal{B}_b(\mathbb{R}^{d_x})$ one can also estimate the differences of the time-discretized filter at time $p+t$, $[\eta_{p+t}^{l}-\eta_{p+t}^{l-1}](\varphi)$, as
$$
[\eta_{p+t}^{l}-\eta_{p+t}^{l-1}]^N(\varphi) := 
\frac{\sum_{i=1}^N\Big(\prod_{k=0}^{t\Delta_{l}^{-1}-1}G_{p\Delta_l^{-1}+k}^l(x_{p+k\Delta_l}^{l,i})\Big)\varphi(x_{p+t}^{l,i})}{\sum_{i=1}^N\prod_{k=0}^{t\Delta_{l}^{-1}-1}G_{p\Delta_l^{-1}+k}^l(x_{p+k\Delta_l}^{l,i})} - 
\frac{\sum_{i=1}^N\Big(\prod_{k=0}^{t\Delta_{l-1}^{-1}-1}G_{p\Delta_{l-1}^{-1}+k}^{l-1}(\bar{x}_{p+k\Delta_{l-1}}^{l-1,i})\Big)\varphi(\bar{x}_{p+t}^{l-1,i})}{\sum_{i=1}^N\prod_{k=0}^{t\Delta_{l-1}^{-1}-1}G_{p\Delta_{l-1}^{-1}+k}^{l-1}(\bar{x}_{p+k\Delta_{l-1}}^{l-1,i})}.
$$

\begin{algorithm}
\begin{enumerate}
\item{With probability $\check{\pi}_{p-1}^{l,N}\Big(\{F_{p-1,\check{\pi}_{p-1}^{l,N},l} \wedge F_{p-1,\check{\pi}_{p-1}^{l,N},l-1}\}\Big)$ generate $w_p\in (E_l\times E_{l-1})$ according to 
$$
\frac{\sum_{i=1}^NF_{p-1,\check{\pi}_{p-1}^{l,N},l}(w_{p-1}^{l,i})\wedge F_{p-1,\check{\pi}_{p-1}^{l,N},l-1}(w_{p-1}^{l-1,i})\check{M}^l\Big((x_p^{l,i},\bar{x}_p^{l-1,i}),\cdot\Big)}{\sum_{i=1}^NF_{p-1,\check{\pi}_{p-1}^{l,N},l}(w_{p-1}^{l,i})\wedge F_{p-1,\check{\pi}_{p-1}^{l,N},l-1}(w_{p-1}^{l-1,i})}.
$$
}
\item{Otherwise, generate $w_p\in (E_l\times E_{l-1})$ according to 
$$
\frac{1}{N^2}\sum_{i=1}^N\sum_{j=1}^N\Big\{\overline{F}_{p-1,\check{\pi}_{p-1}^{l,N},l}(w_{p-1}^{l,i})\otimes \overline{F}_{p-1,\check{\pi}_{p-1}^{l,N},l-1}(w_{p-1}^{l,j})\Big\}\check{M}^l\Big((x_p^{l,i},\bar{x}_p^{l-1,j}),\cdot\Big).
$$
}
\end{enumerate}
\caption{Sampling from $\check{\Phi}_p^l(\check{\pi}_{p-1}^{l,N})(\cdot)$.}
\label{alg:bayes_cpf}
\end{algorithm}

\begin{algorithm}
\begin{enumerate}
\item{Initialize: For $i\in\{1,\dots,N\}$, generate $w_0^{l,i}$ from $\check{M}^l\Big((x_{*},x_{*}),\cdot\Big)$. Set $p=1$.}
\item{Update: For $i\in\{1,\dots,N\}$, generate $w_p^{l,i}$ from $\check{\Phi}_p^l(\check{\pi}_{p-1}^{l,N})(\cdot)$ as described in Algorithm \ref{alg:bayes_cpf}. Set $p=p+1$ and return to the start of 2.}
\end{enumerate}
\caption{A Coupled Particle Filter.}
\label{alg:cpf_max_coupling}
\end{algorithm}

\subsection{Multilevel Particle Filter}\label{sec:ml_est}
We can now describe the multilevel particle filter (MLPF) using the developments in Sections \ref{sec:particle_filter} and \ref{sec:cpf}.

\begin{enumerate}
\item{Level 0: Run a PF as in Algorithm \ref{alg:pf} with $N_0$ samples, independently of all other levels.} 
\item{Level $l\in\{1,\dots,L\}$: Run a CPF (to approximate the time-discretized filters at levels $l$ and $l-1$) as in Algorithm \ref{alg:cpf_max_coupling} with $N_l$ samples, independently of all other levels.}
\end{enumerate}
Using the multilevel identity \eqref{eq:ml_identity}, an estimator of expectations $\eta_{t}^{L}(\varphi)$ for $t\in\mathbb{N}, \varphi\in\mathcal{B}_b(\mathbb{R}^{d_x})$ with respect to the time-discretized filter at the highest level $L$ is then given by
\begin{equation}\label{eq:ml_est}
\eta_{t}^{L,ML}(\varphi) := \eta_{t}^{0,N_0}(\varphi) + \sum_{l=1}^L [\eta_{t}^{l}-\eta_{t}^{l-1}]^{N_l}(\varphi)
\end{equation}
where $\eta_{t}^{0,N_0}(\varphi)$ is the estimator in \eqref{eq:pf_est} at level $l=0$ with $N=N_0$ samples, 
and $[\eta_{t}^{l}-\eta_{t}^{l-1}]^{N_l}(\varphi)$ is the estimator in \eqref{eq:cpf_est} with $N=N_l$ samples.
We now consider estimating expectations with respect to the time-discretized filter at level $L$ between unit times.
For $(l,p,t,\varphi)\in\{1,\dots,L-1\}\times\mathbb{N}_0\times\{\Delta_{l},2\Delta_{l},\dots,1-\Delta_{l}\}\times\mathcal{B}_b(\mathbb{R}^{d_x})$, one can approximate $\eta_{p+t}^{L}(\varphi)$ as a by-product of the above procedure using
\begin{eqnarray}\label{eq:ml_est_int}
\eta_{p+t}^{L,l,ML}(\varphi) & := & \sum_{m=l+1}^L\Bigg\{
\frac{\sum_{i=1}^{N_m}\Big(\prod_{k=0}^{t\Delta_{m}^{-1}-1}G_{p\Delta_m^{-1}+k}^m(x_{p+k\Delta_m}^{l,i})\Big)\varphi(x_{p+t}^{m,i})}{\sum_{i=1}^{N_m}\prod_{k=0}^{t\Delta_{m}^{-1}-1}G_{p\Delta_m^{-1}+k}^l(x_{p+k\Delta_m}^{l,i})}- \nonumber \\ & &
\frac{\sum_{i=1}^{N_m}\Big(\prod_{k=0}^{t\Delta_{m-1}^{-1}-1}G_{p\Delta_{m-1}^{-1}+k}^{m-1}(\bar{x}_{p+k\Delta_{m-1}}^{m-1,i})\Big)\varphi(\bar{x}_{p+t}^{m-1,i})}{\sum_{i=1}^{N_m}\prod_{k=0}^{t\Delta_{m-1}^{-1}-1}G_{p\Delta_{m-1}^{-1}+k}^{m-1}(\bar{x}_{p+k\Delta_{m-1}}^{m-1,i})}\Bigg\}+ \nonumber\\ & & 
\frac{\sum_{i=1}^{N_l}\Big(\prod_{k=0}^{t\Delta_l^{-1}-1}G_{p\Delta_l^{-1}+k}^l(x_{p+k\Delta_l}^{l,i})\Big)\varphi(x_{p+t}^{l,i})}{\sum_{i=1}^{N_l}\prod_{k=0}^{t\Delta_l^{-1}-1}G_{p\Delta_l^{-1}+k}^l(x_{p+k\Delta_l}^{l,i})} \label{eq:ml_est_int}
\end{eqnarray}
where the $m^{\textrm{th}}-$term of the summand on the R.H.S.~has been obtained by a CPF at level $m$, and the second term in the sum on the R.H.S.~is the level $l$ particles from the CPF run targeting $(\eta_t^l,\eta_t^{l-1})$.

\section{Theoretical Results}\label{sec:theory}

We consider the estimator \eqref{eq:ml_est} in our analysis. The estimator \eqref{eq:ml_est_int} can also be analyzed with the same approach with only additional notational complications.
$\overline{\mathbb{E}}$ is used to denote expectations w.r.t.~the simulated process, which averages over the dynamics of the data, under the probability measure $\overline{\mathbb{P}}$.
The proofs needed for the following result are given in the appendix. Below, we write $A:=\{(l,q)\in\{1,\dots,L\}^2:l\neq q\}$. The following theorem gives a bound on the MSE.

\begin{theorem}\label{theo:main}
Assume (D\ref{hyp_diff:1}). Then for any $t\in\mathbb{N}_0$, there exists a $C<+\infty$  such that for any $L\in\{1,2,\dots\}$, $((N_0,\dots,N_L),\varphi)\in\mathbb{N}^{L+1}\times\mathcal{B}_b(\mathbb{R}^{d_x})\cap\textrm{\emph{Lip}}_{\|\cdot\|_2}(\mathbb{R}^{d_x})$
$$
\overline{\mathbb{E}}[(\eta_{t}^{L,ML}(\varphi)-\eta_t(\varphi))^2] \leq 
$$
$$
C(\|\varphi\|+\|\varphi\|_{\textrm{\emph{Lip}}})^2\left(\sum_{l=0}^L\left(\frac{\Delta_l^{1/2}}{N_l} 
+ \frac{\Delta_l^{1/4}}{N_l^{3/2}} 
\right)
+ \sum_{l=1}^L\sum_{q=1}^L\mathbb{I}_{A}(l,q)
\left(\frac{\Delta_l^{1/4}}{N_l^{1/2}} 
+ \frac{\Delta_l^{1/8}}{N_l^{3/4}}
\right)
\left(\frac{\Delta_q^{1/4}}{N_q^{1/2}} 
+ \frac{\Delta_q^{1/8}}{N_q^{3/4}}
\right)
+ \Delta_L\right).
$$
\end{theorem}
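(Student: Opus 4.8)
The plan is to separate the multilevel sampling error from the time-discretization bias and then to exploit the conditional independence of the levels given the observation path. Writing $\mathcal{Y}_t$ for the observation filtration and using $(a+b)^2\le 2a^2+2b^2$, I would first split
\[
\overline{\mathbb{E}}[(\eta_t^{L,ML}(\varphi)-\eta_t(\varphi))^2]\le 2\,\overline{\mathbb{E}}[(\eta_t^{L,ML}(\varphi)-\eta_t^{L}(\varphi))^2]+2\,\overline{\mathbb{E}}[(\eta_t^{L}(\varphi)-\eta_t(\varphi))^2].
\]
The second summand is the squared time-discretization error of the filter, averaged over the data. I would bound it by $C(\|\varphi\|+\|\varphi\|_{\textrm{Lip}})^2\Delta_L$ by combining the weak error of the Picard-type discretization of $Z_T$ with the Euler error for the signal, propagated through the Bayes recursion \eqref{eq:pred_up} using the stability (a Dobrushin contraction) of the prediction operators $\Phi_p^l$ afforded by the uniform ellipticity in (D\ref{hyp_diff:1}); this produces the isolated $\Delta_L$ term.

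For the sampling error I would telescope through \eqref{eq:ml_identity}--\eqref{eq:ml_est}, setting $U_0:=\eta_t^{0,N_0}(\varphi)-\eta_t^{0}(\varphi)$ and $U_l:=[\eta_t^{l}-\eta_t^{l-1}]^{N_l}(\varphi)-[\eta_t^{l}-\eta_t^{l-1}](\varphi)$ for $l\ge 1$, so that $\eta_t^{L,ML}(\varphi)-\eta_t^{L}(\varphi)=\sum_{l=0}^L U_l$. Since the level-$0$ PF and the level-$l$ CPFs are run independently conditional on the data, the $U_l$ are conditionally independent given $\mathcal{Y}_t$; writing $b_l:=\overline{\mathbb{E}}[U_l\mid\mathcal{Y}_t]$ for the finite-sample (conditional) bias, the tower property and conditional independence give
\[
\overline{\mathbb{E}}\Big[\big(\textstyle\sum_{l=0}^L U_l\big)^2\Big]=\sum_{l=0}^L\overline{\mathbb{E}}[U_l^2]+\sum_{l\neq q}\overline{\mathbb{E}}[b_l\,b_q],
\]
the off-diagonal indices ranging over $\{0,\dots,L\}$. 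The pairs with both indices in $\{1,\dots,L\}$ form the set $A$ of the statement; the remaining cross terms (those involving level $0$) I would absorb into the diagonal by Young's inequality, using $\Delta_0=1$.

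The two estimates that carry the argument, to be proved in the appendix, are a single-level second-moment bound $\overline{\mathbb{E}}[U_l^2]\le C(\|\varphi\|+\|\varphi\|_{\textrm{Lip}})^2(\Delta_l^{1/2}N_l^{-1}+\Delta_l^{1/4}N_l^{-3/2})$, giving the diagonal sum, and an $\mathbb{L}_2(\overline{\mathbb{P}})$ bound on the conditional bias $\overline{\mathbb{E}}[b_l^2]^{1/2}\le C(\|\varphi\|+\|\varphi\|_{\textrm{Lip}})(\Delta_l^{1/4}N_l^{-1/2}+\Delta_l^{1/8}N_l^{-3/4})$, which with Cauchy--Schwarz, $\overline{\mathbb{E}}[b_lb_q]\le \overline{\mathbb{E}}[b_l^2]^{1/2}\overline{\mathbb{E}}[b_q^2]^{1/2}$, yields the off-diagonal sum over $A$. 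To prove these I would write the normalized CPF estimator \eqref{eq:cpf_est} in terms of its unnormalized numerator and denominator, expand the error through the coupled recursion \eqref{eq:pred_rec_cpf}, and combine Marcinkiewicz--Zygmund/$C_p$ inequalities for the particle sums, the maximal-coupling structure of $\check{\Phi}_p^l$ (which promotes common ancestors), the synchronous coupling $\check{P}^l$ of the Euler paths (which yields the strong error $\overline{\mathbb{E}}[\|X^l-X^{l-1}\|_2^2]=O(\Delta_l)$), and moment bounds on the potential products $\mathbf{G}_p^l$.

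The main obstacle is the conditional-bias estimate. Its difficulty is that $b_l$ couples three distinct error sources---the $O(N_l^{-1})$ normalization bias of the particle approximation, the strength of the coupling, and the averaging over the observation path---and that the increments $y_{(k+1)\Delta_l}-y_{k\Delta_l}$ entering the potentials $G_k^l$ are, under $\overline{\mathbb{P}}$, Brownian increments of size $\Delta_l^{1/2}$ whose products over the $\Delta_l^{-1}$ steps of a unit interval must be controlled in $\mathbb{L}_2(\overline{\mathbb{P}})$. It is exactly this data-averaging that degrades the nominal strong rate by a square root and yields the fractional exponents $\Delta_l^{1/4}$ and $\Delta_l^{1/8}$ in place of $\Delta_l^{1/2}$; making this precise requires the stability analysis of the several time-discretized Feynman--Kac semigroups, uniformly in the random data, which is the technical heart of the proof.
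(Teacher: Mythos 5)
Your proposal is correct and follows essentially the same route as the paper's proof: the same initial $C_2$ split into Monte Carlo error plus squared time-discretization bias (the latter bounded by $C(\|\varphi\|+\|\varphi\|_{\textrm{Lip}})^2\Delta_L$, which the paper obtains from the $\mathbb{L}_r$ estimate of Remark \ref{rem:theo_bias} rather than from a semigroup-stability argument --- note the paper's constants are allowed to depend on $t$, so no Dobrushin-type contraction is invoked), the same single-level second-moment bound $\overline{\mathbb{E}}[U_l^2]\le C(\|\varphi\|+\|\varphi\|_{\textrm{Lip}})^2(\Delta_l^{1/2}N_l^{-1}+\Delta_l^{1/4}N_l^{-3/2})$ (Proposition \ref{lem:l2_cpf_l2}), and the same Cauchy--Schwarz treatment of the cross terms, which survive precisely because all levels share the one data path and are only conditionally independent given $\mathcal{Y}_t$. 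Two precisions are worth recording. First, your detour through the conditional biases $b_l=\overline{\mathbb{E}}[U_l\mid\mathcal{Y}_t]$ is sound but superfluous at the rates you use: by conditional Jensen, $\overline{\mathbb{E}}[b_l^2]\le\overline{\mathbb{E}}[U_l^2]$, so your ``conditional-bias estimate'' is an immediate corollary of the second-moment bound (via $\sqrt{a+b}\le\sqrt{a}+\sqrt{b}$) and not the separate obstacle you describe; the paper applies Cauchy--Schwarz to $\overline{\mathbb{E}}[U_lU_q]$ directly, and the genuine technical heart is Proposition \ref{lem:l2_cpf_l2} itself, through Lemmata \ref{lem:l2_cpf} and \ref{lem:l2_cpf_contr} and the coupling results of Appendix \ref{app:cpf1} --- your sketch of that machinery (martingale-plus-remainder decomposition, Marcinkiewicz--Zygmund, maximal coupling of ancestors, synchronous coupling of Euler paths, moment control of the potentials) matches the paper's. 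Second, your treatment of the level-$0$ cross terms needs care: absorbing each $\overline{\mathbb{E}}[b_0b_q]$, $q\ge 1$, \emph{termwise} into the diagonal by Young accumulates $L$ copies of $N_0^{-1}$ and hence an $L$-dependent constant, which the theorem statement does not permit. The correct move is to apply Young to the collective product, $2\,\overline{\mathbb{E}}\bigl[U_0\sum_{q=1}^LU_q\bigr]\le\overline{\mathbb{E}}[U_0^2]+\overline{\mathbb{E}}\bigl[\bigl(\sum_{q=1}^LU_q\bigr)^2\bigr]$, which is exactly the paper's second $C_2$-inequality in \eqref{eq:thm_decomp_further} peeling off level $0$ \emph{before} expanding the square; expanding $\bigl(\sum_{q=1}^LU_q\bigr)^2$ then produces precisely the index set $A\subset\{1,\dots,L\}^2$ appearing in the statement, with the level-$0$ contribution handled by Remark \ref{rem:idiot_referee}.
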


\begin{proof}
We consider
\begin{align}\label{eq:thm_decomp}
\overline{\mathbb{E}}[(\eta_{t}^{L,ML}(\varphi)-\eta_t(\varphi))^2] \leq 2\overline{\mathbb{E}}[[\eta_{t}^{L,ML}-\eta_t^L](\varphi)^2] + 2\overline{\mathbb{E}}[[\eta_t^L-\eta_t](\varphi)^2].
\end{align}
For the right-most term on the R.H.S. of \eqref{eq:thm_decomp}, one can use Remark \ref{rem:theo_bias} (in the appendix).
For the left-most term on the R.H.S. of \eqref{eq:thm_decomp}, one has by the $C_2-$inequality that 
\begin{align}\label{eq:thm_decomp_further}
\overline{\mathbb{E}}[[\eta_{t}^{L,ML}-\eta_t^L](\varphi)^2] \leq 2\overline{\mathbb{E}}[[\eta_{t}^{0,N_0}-\eta_{t}^{0}](\varphi)^2] + 
2\overline{\mathbb{E}}\left[\left(\sum_{l=1}^L\{[\eta_{t}^{l}-\eta_{t}^{l-1}]^{N_l}-[\eta_{t}^{l}-\eta_{t}^{l-1}]\}(\varphi)\right)^2\right].
\end{align}
The first term on the R.H.S. of \eqref{eq:thm_decomp_further} can be dealt with using Remark \ref{rem:idiot_referee} (in the appendix). 
For the second term on the R.H.S. of \eqref{eq:thm_decomp_further}, one can multiply out the brackets
and apply Proposition \ref{lem:l2_cpf_l2} for the terms with second moments. 
For the cross product terms, which \emph{are only independent conditional on the data}, one can use the Cauchy-Schwarz inequality and Proposition \ref{lem:l2_cpf_l2}.
\end{proof}

\begin{rem}
It is remarked that Theorem \ref{theo:main} is given in terms of the probability measure $\overline{\mathbb{P}}$ not $\mathbb{P}$; in the literature, one 
would typically state the result under $\mathbb{P}$. However, as 
$$
\mathbb{E}((\eta_{t}^{L,ML}(\varphi)-\eta_t(\varphi))^2] = \overline{\mathbb{E}}[Z_t(\eta_{t}^{L,ML}(\varphi)-\eta_t(\varphi))^2],
$$
by following the proofs, for instance of Lemmata \ref{lem:l2_cpf} and \ref{lem:l2_cpf_contr}, one can still deduce the same result under $\mathbb{P}$.
The calculations are of a fairly standard nature and are omitted for brevity.
\end{rem}

\begin{rem}
We note that the constant $C$ in Theorem \ref{theo:main} depends upon $t$. As seen in \cite{cpf_clt}, the task of bounding the asymptotic variance uniformly in $t$ (for models as in \cite{mlpf})
 is particularly difficult and one expects even more arduous calculations for the finite-sample variance. All of our below discussion does not consider $t$, although this is of course a very important issue. 
\end{rem}

We note that if one considers \eqref{eq:pf_est}, then the MSE associated to this estimator can be upper-bounded (using the $C_2-$inequality, Remark \ref{rem:ext_lp_pf} along with Lemma \ref{lem:res1} and Remark \ref{rem:theo_bias}) by
\begin{equation}\label{eq:pf_mse}
C(\|\varphi\|+\|\varphi\|_{\textrm{Lip}})^2\left(\frac{1}{N} + \Delta_L\right).
\end{equation}
Note that the bias term is $\mathcal{O}(\Delta_L)$ and not $\mathcal{O}(\Delta_L^2)$ (as in e.g.~\cite{mlpf}) as our results averages over the uncertainty in the data, as is often done in the literature in the analysis of continuous-time particle filters (e.g.~\cite{crisan_bain}). The order of the bias can be improved by using higher-order discretization methods.

Let $\epsilon>0$ be arbitrary. From Theorem \ref{theo:main}, to obtain a bound on the MSE of $\mathcal{O}(\epsilon^2)$, one can first choose the highest level $L$ so that the bias is $\Delta_L=\mathcal{O}(\epsilon^2)$. Then by having $N_l=\mathcal{O}(\epsilon^{-2}\Delta_L^{-1/4}\Delta_{l}^{3/4})$ samples at level $l$, the upper-bound in Theorem \ref{theo:main} would be $\mathcal{O}(\epsilon^2)$. The associated cost to achieve this MSE is $\mathcal{O}(\sum_{l=0}^L \Delta_l^{-1}N_l)=\mathcal{O}(\epsilon^{-3})$. 
For the estimator \eqref{eq:pf_est}, it follows from the upper-bound in \eqref{eq:pf_mse} that choosing the level $L$ to keep the bias $\Delta_L=\mathcal{O}(\epsilon^2)$ and having $N=\mathcal{O}(\epsilon^{-2})$ number of samples would require a cost of $\mathcal{O}(\Delta_L^{-1}N) = \mathcal{O}(\epsilon^{-4})$. We note that if the diffusion coefficient $\sigma$ in \eqref{eq:state} were constant, 
one would expect that the MSE associated to \eqref{eq:ml_est} to be upper-bounded by a term of $\mathcal{O}(\sum_{l=0}^L\frac{\Delta_l}{N_l} + \Delta_L)$. By choosing $N_l=\mathcal{O}(\epsilon^{-2}\Delta_{l}L)$, one can then show that the cost to achieve a MSE of $\mathcal{O}(\epsilon^2)$ is $\mathcal{O}(\epsilon^{-2}\log(\epsilon)^2)$ for the MLPF method.
For the PF, again the cost is $\mathcal{O}(\Delta_L^{-1}N) = \mathcal{O}(\epsilon^{-4})$.

An issue that may appear in practice is that by increasing the highest level $L$, one must have access to data that was observed at a frequency of $2^{-L}$. This could create a bottleneck for the multilevel procedure as one cannot exceed the frequency at which the data was observed. A possible remedy is to linearly interpolate the data, in which case, one may want to consider the robust filter (see \cite{clark} and \cite[Chapter 5]{crisan_bain}).

\section{Numerical Results}\label{sec:numerics}
In the following, we introduce four models in Section \ref{sec:numerical_model}, detail some implementation settings in Section \ref{sec:numerical_settings}
and conclude with our numerical findings in Section \ref{sec:numerical_results}.

\subsection{Models}\label{sec:numerical_model}
We set the model dimensions as $d_x=d_y=1$ and the initialization as $X_0=x_{*}=0$.
We will consider four different models for the signal and the observation function $h(x)=x$ in all cases. 
Data were generated from the process under the probability measure $\overline{\mathbb{P}}$.

\textbf{Ornstein-Uhlenbeck}. This process is defined as the solution of 
\begin{equation*}
dX_{t} = \theta(\mu-X_{t})dt + \sigma dW_{t}.
\end{equation*}
The parameters in the example are taken as $\theta=1$, $\mu=0$ and $\sigma=0.5$. 

\textbf{Langevin dynamics}. The (overdamped) Langevin dynamics is defined by the stochastic differential equation (SDE)
\begin{equation*}
dX_{t} = \frac{1}{2}\nabla \log\pi(X_{t})dt + dW_{t} 
\end{equation*}
where $\pi(x)$ denotes the stationary probability density function. We select the latter as the $t-$distribution with 
$10$ degrees of freedom (with zero location, unit scale). 

\textbf{Geometric Brownian motion}. This process is defined by the following SDE
\begin{equation*}
dX_{t} = \mu X_{t}dt + \sigma X_{t} dW_{t}
\end{equation*}
We set the model parameters as $\sigma = 0.2$ and $\mu = 0.02$. 

\textbf{SDE with a non-linear diffusion term}. Lastly, we consider the SDE
\begin{equation*}
dX_{t} = \theta(\mu-X_{t})dt + \frac{1}{\sqrt{1+X_{t}^{2}}} dW_{t} 
\end{equation*}
with parameters $\theta=0$ and $\mu=0$. 

We remark that none of these models satisfy (D\ref{hyp_diff:1}). This assumption, which can hold for certain models,
is mainly used to keep the mathematical proofs at a sensible length and simultaneously providing a formal proof of the properties that 
we expect to see in more generality for problems of practical interest. This will be illustrated below.

\subsection{Simulation Settings}\label{sec:numerical_settings}

We will compare the standard approach based on the PF (Section \ref{sec:particle_filter}) to the MLPF (Section \ref{sec:ml_est}). 
We will also consider another multilevel method proposed in \cite{mlpf_new}.
For the case of a constant diffusion coefficient (resp.~non-constant) of the signal, we expect that one can set $\Delta_L=\mathcal{O}(\epsilon^2)$ and 
$N_l=\mathcal{O}(\epsilon^{-2}\Delta_{l}^{3/2})$ (resp.~$N_l=\mathcal{O}(\epsilon^{-2}\Delta_{l}L)$) in the approach of \cite{mlpf_new} 
to achieve a MSE of $\mathcal{O}(\epsilon^2)$ with a cost of $\mathcal{O}(\epsilon^{-2})$ (resp.~$\mathcal{O}(\epsilon^{-2}\log(\epsilon)^2)$). 
We note that, to our knowledge, there is no proof of this result in our context and it is a topic to be considered in future work. 

In the following, we consider computing the expected value of the signal, i.e. $\varphi(x)=x$, at $T=100$ time units. 
For all examples, the multilevel estimator \eqref{eq:ml_est} is considered at levels $L\in\{4,\dots,9\}$. 
For the Ornstein-Uhlenbeck model, the ground truth is computed using a Kalman filter. 
For all other examples, the results from a particle filter at level $L=10$ with $N=100\times 2^{10}$ particles is used as an approximation of the ground truth. 
For each level $l$ of the PF, $N_{l}=100\times\Delta_l$ particles are used. The number of particles in the MLPF and the multilevel method of \cite{mlpf_new} 
are specified using the discussion in Section \ref{sec:theory} and the above paragraph, respectively.
All results are averaged over multiple runs. 
We employ adaptive resampling for all three approaches; 
resampling is performed whenever the effective sample size of the coarse filter (when using a CPF) falls below half the number of samples simulated.

\subsection{Results}\label{sec:numerical_results}
Our numerical results are summarized in Figure \ref{fig:res} in terms of log-log (base 10) plots of the cost against MSE. 
These plots agree with our theoretical findings in Section \ref{sec:theory}.
For all models, the slope of the plots in Figure \ref{fig:res} corresponding to PF is around $-2$, which matches with a cost of 
$\mathcal{O}(\epsilon^{-4})$ to achieve a MSE of $\mathcal{O}(\epsilon^{2})$.
For models (with constant diffusion coefficient) on the first row Figure \ref{fig:res}, we observe slopes close to $-1$, which corresponds roughly to a cost of 
$\mathcal{O}(\epsilon^{-2}\log(\epsilon)^2)$ to obtain a MSE of $\mathcal{O}(\epsilon^{2})$. 
Similarly, for models (with non-constant diffusion coefficient) on the second row of Figure \ref{fig:res}, slopes of approximately $-1.5$ agrees 
with a cost of $\mathcal{O}(\epsilon^{-3})$ for a MSE of $\mathcal{O}(\epsilon^{2})$.
Lastly, we also observe that the conjectured improvements of the method in \cite{mlpf_new} seem to be confirmed in these examples. 


\begin{figure}[!htbp]\centering
\subfigure[Ornstein-Uhlenbeck]{\includegraphics[width=6cm,height=5.5cm]{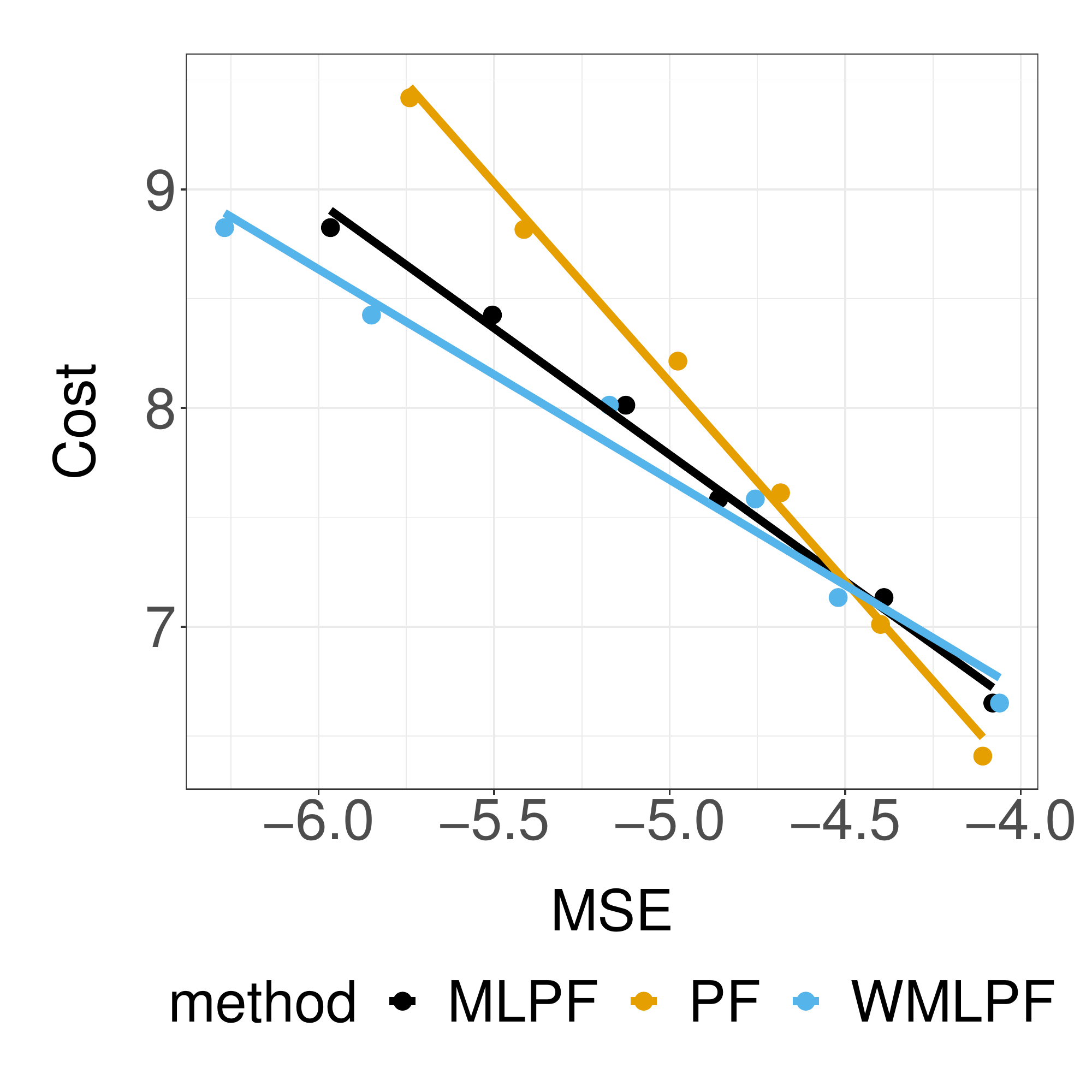}}
\subfigure[Langevin dynamics]{\includegraphics[width=6cm,height=5.5cm]{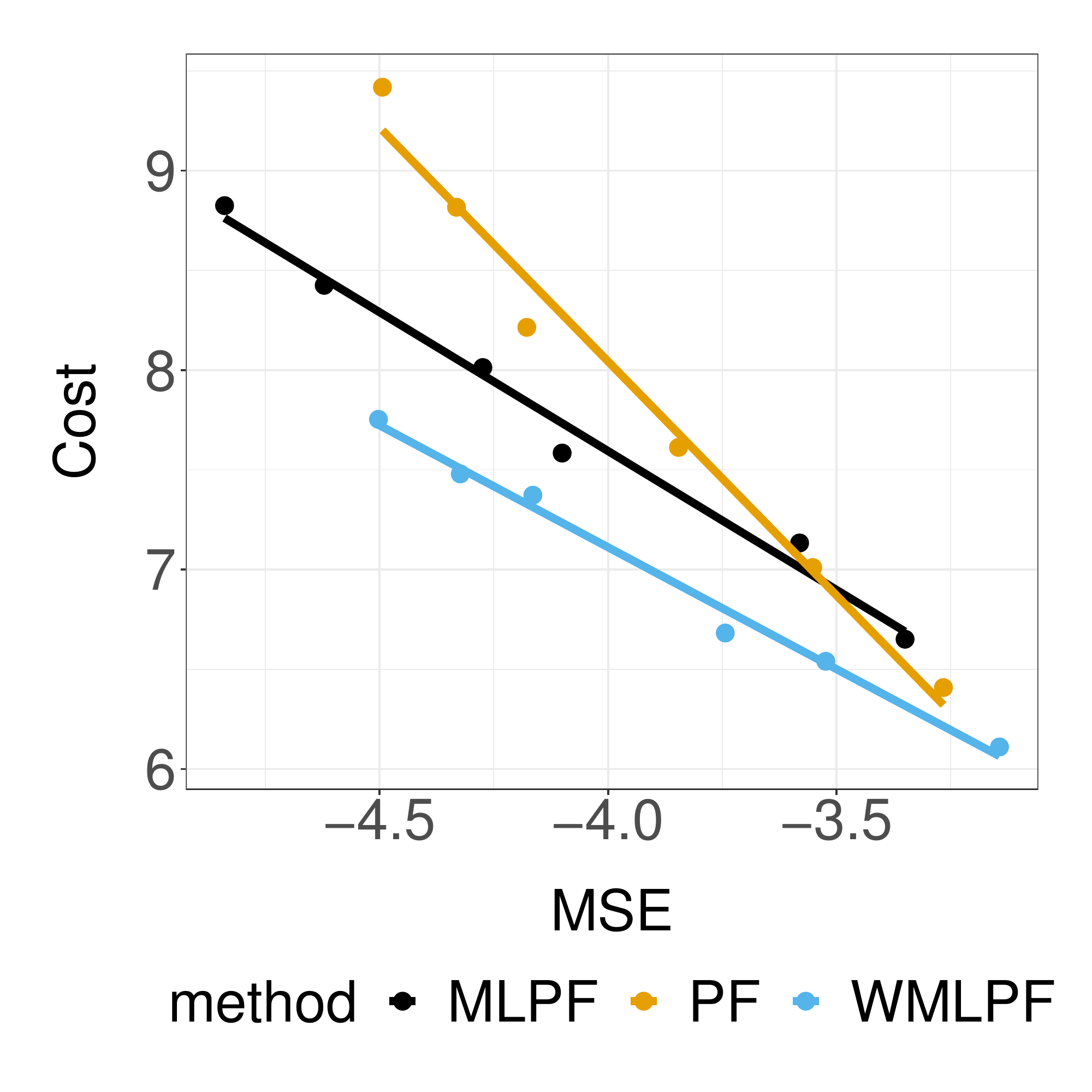}}
\subfigure[SDE with a non-linear diffusion term]{\includegraphics[width=6cm,height=5.5cm]{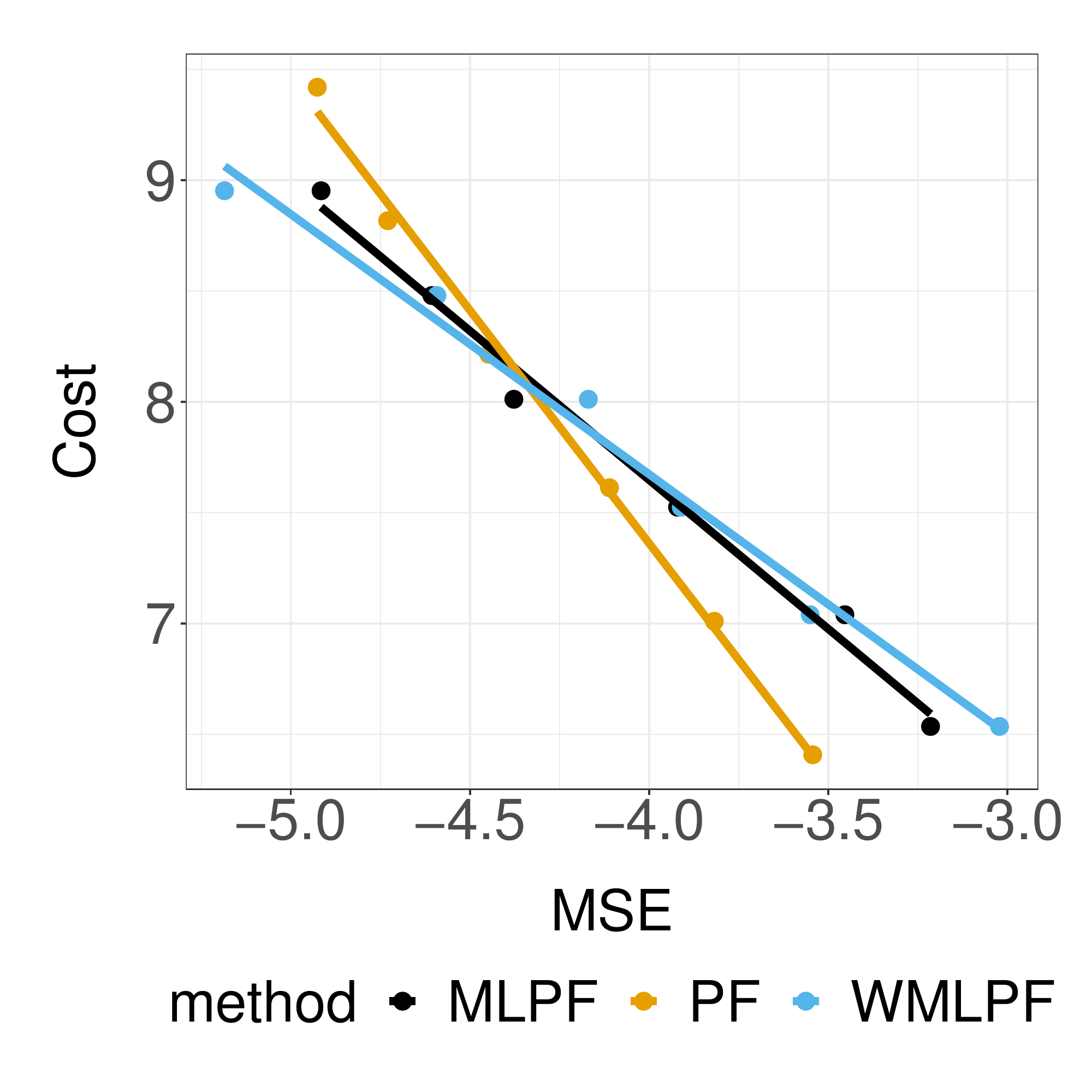}}
\subfigure[Geometric Brownian motion]{\includegraphics[width=6cm,height=5.5cm]{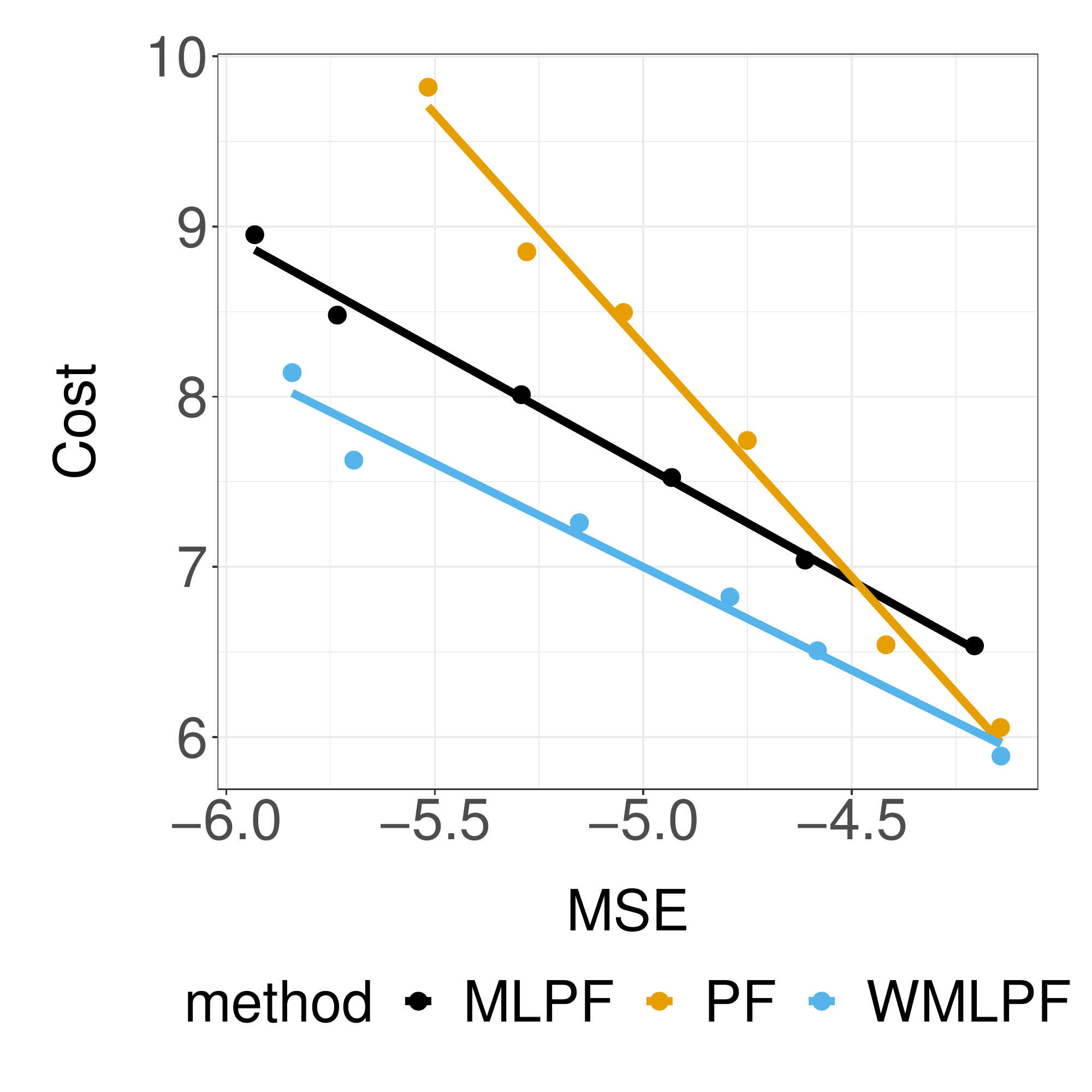}}
\caption{Log-log plots of cost against MSE for the PF (mustard), MLPF (black) and the multilevel method of \cite{mlpf_new} (baby blue).\label{fig:res}}
\end{figure}

\subsubsection*{Acknowledgements}

AJ was supported by KAUST baseline funding. We thank two referees for comments that have greatly improved the article.


\appendix

\section{Proofs}

\subsection{Objective and Structure}

The main objective of this appendix is to provide a meaningful bound, in terms of $l$ and $N$, on
$$
\overline{\mathbb{E}}[\{[\eta_{t}^{l}-\eta_{t}^{l-1}]^{N}-[\eta_{t}^{l}-\eta_{t}^{l-1}]\}(\varphi)^2].
$$
This is the main result which we will need to prove Theorem \ref{theo:main}. The typical way that this can be achieved
is to consider the predictor $\check{\pi}_t^{l,N}$ in \eqref{eq:cpf_est}. Our strategy to consider this latter object 
first in Lemma \ref{lem:l2_cpf}, which provides an upper-bound in terms of $N$ and the expectation of some operators.
The remainder of the proof is then concerned with the control of these expectations in terms of $l$; this result is given in Lemma \ref{lem:l2_cpf_contr}.
Finally these results are used to prove the bound of interest in Proposition \ref{lem:l2_cpf_l2}. The remaining technical developments are all used
to achieve these results. At a first reading, one can then proceed as just looking at Lemmata \ref{lem:l2_cpf} and \ref{lem:l2_cpf_contr} followed by 
Proposition \ref{lem:l2_cpf_l2}.

The structure of this appendix is as follows. In Section \ref{sec_app:note} we provide some additional notations and results which will be used in the appendix.
In Section \ref{app:cpf}, we give results on the coupled particle filter and in particular Lemma \ref{lem:l2_cpf}. Within this section are two additional subsections;
Sections \ref{app:cpf1} and \ref{app:cpf22}. In Section \ref{app:cpf1}, we consider the `coupling ability' of the coupled particle filter. That is, how close particle pairs
are, in terms of $\Delta_l$, which is the critical property of CPFs and indeed multilevel methods. Our results in this section are simply specializations of the results already proved
in \cite{mlpf}. In Section \ref{app:cpf22}, we give the important results Lemma \ref{lem:l2_cpf_contr} and Proposition \ref{lem:l2_cpf_l2}.
The results in the afore-mentioned appendices depend themselves on some important properties of non-linear filtering problems in continuous time and particle filters.
The non-linear filtering properties are given in Section \ref{app:nl_filt} and this can be read more-or-less linearly. The results are fairly well understood in the literature, but are
provided for completeness of the article. The particle filter is considered in Section \ref{app:pf} and the results are essentially the standard ones, for instance in \cite{delmoral}, given
in the context of this paper. One way to read the appendix is, begin with Section \ref{sec_app:note} and then to read, linearly, Section \ref{app:cpf} in its entirety, accepting the results in Sections \ref{app:nl_filt}  and \ref{app:pf}
and those latter sections can then be read (which can both be read linearly). Another way is Section \ref{sec_app:note}-\ref{app:nl_filt}-\ref{app:pf}-\ref{app:cpf}.

\subsection{Some Notations}\label{sec_app:note}

Some operators are now defined. For $(l,p,n)\in\mathbb{N}_0^3$, $n>p$, $(u_p,\varphi)\in E_l\times\mathcal{B}_b(E_l)$, let 
$$
\mathbf{Q}_{p,n}^l(\varphi)(u_p) := \int \varphi(u_n)\Big(\prod_{q=p}^{n-1} \mathbf{G}_q^l(u_q)\Big) \prod_{q=p+1}^{n}M^l(u_{q-1},du_q),
$$
where we use the convention $\mathbf{Q}_{p,p}^l(\varphi)(u_p)=\varphi(u_p)$. 
One could interpret this as a type of weighted evolution operator from time $p$ to $n$ when a discretization of $\Delta_l$ is used.
In addition, for $(l,p,n)\in\mathbb{N}_0^3$, $n>p$, $(u_p,\varphi)\in E_l\times\mathcal{B}_b(E_l)$, let 
$$
\mathbf{D}_{p,n}^l(\varphi)(u_p) := \frac{\mathbf{Q}_{p,n}^l(\varphi-\pi_n^l(\varphi))(u_p)}{\pi_p^l(\mathbf{Q}_{p,n}^l(1))}
$$
where $\mathbf{D}_{p,p}^l(\varphi)(u_p)=\varphi(u_p)-\pi_p^l(\varphi)$. The latter operator would facilitate certain martingale decompositions in the following.

Throughout our arguments, $C$ is a finite constant whose value may change from line to line, but does not depend upon $l$ nor $N$. The particular dependencies of a given constant will be clear from the statement of a given result. 

As $h$ is bounded, there exists $-\infty<\underline{C}<\overline{C} <+\infty$, such that for any $(l,n)\in\mathbb{N}\times\mathbb{N}_0$ and $u_{n}\in E_l$, almost surely
\begin{eqnarray*}
\mathbf{G}_{n}^l(u_n) & \leq & \overline{\mathbf{G}}_{n}^l \\
\mathbf{G}_{n}^l(u_n) & \geq & \underline{\mathbf{G}}_{n}^l
\end{eqnarray*}
where
\begin{eqnarray}
\overline{\mathbf{G}}_{n}^l & = & \overline{C}\exp\Big\{\sum_{i=0}^{\Delta_l^{-1}-1}\sum_{k=1}^{d_y}\Big(\overline{C}\mathbb{I}_{[0,\infty)}((Y_{n+(i+1)\Delta_l}^{(k)}-Y_{n+i\Delta_l}^{(k)}))(Y_{n+(i+1)\Delta_l}^{(k)}-Y_{n+i\Delta_l}^{(k)})+ \nonumber\\ & &
\underline{C}\mathbb{I}_{(-\infty,0)}((Y_{n+(i+1)\Delta_l}^{(k)}-Y_{n+i\Delta_l}^{(k)}))(Y_{n+(i+1)\Delta_l}^{(k)}-Y_{n+i\Delta_l}^{(k)})
\Big)\Big\}\label{eq:g_upper}\\
\underline{\mathbf{G}}_{n}^l & = & \underline{C}\exp\Big\{\sum_{i=0}^{\Delta_l^{-1}-1}\sum_{k=1}^{d_y}\Big(\underline{C}\mathbb{I}_{[0,\infty)}((Y_{n+(i+1)\Delta_l}^{(k)}-Y_{n+i\Delta_l}^{(k)}))(Y_{n+(i+1)\Delta_l}^{(k)}-Y_{n+i\Delta_l}^{(k)})+ \nonumber \\ & &
\overline{C}\mathbb{I}_{(-\infty,0)}((Y_{n+(i+1)\Delta_l}^{(k)}-Y_{n+i\Delta_l}^{(k)}))(Y_{n+(i+1)\Delta_l}^{(k)}-Y_{n+i\Delta_l}^{(k)})
\Big)\Big\}\label{eq:g_lower}.
\end{eqnarray}
Moreover, for any $r\in\mathbb{N}$, it is straightforward to verify that these upper and lower bounds have finite $\mathbb{L}_r$ and $\mathbb{L}_{-r}$ moments that do not depend upon $l$. 

\subsection{Results for the Coupled Particle Filter}\label{app:cpf}

Set, for $l\in\mathbb{N}$, $(n,p,\varphi)\in\mathbb{N}_0^2\times\mathcal{B}_b(\mathbb{R}^{d_x})$, $p\leq n$
\begin{eqnarray*}
T_{p,n}^{l,1}(\varphi) & := & \overline{\mathbb{E}}[
\max\{(\underline{\mathbf{G}}_{n}^l)^{-2},(\underline{\mathbf{G}}_{n}^{l-1})^{-2}\}
(\mathbf{D}_{p,n}^l(\mathbf{G}_n^l\pmb{\varphi}^l)(U_p^{l,1})- \mathbf{D}_{p,n}^{l-1}(\mathbf{G}_n^{l-1}\pmb{\varphi}^{l-1})(\bar{U}_p^{l-1,1}))^2]
\end{eqnarray*}
and for $p<n$
\begin{eqnarray*}
T_{p,n}^{l,2}(\varphi) & := & \overline{\mathbb{E}}[(\mathbf{D}_{p,n}^l(\mathbf{G}_n^l\pmb{\varphi}^l)(U_p^{l,1})- \mathbf{D}_{p,n}^{l-1}(\mathbf{G}_n^{l-1}\pmb{\varphi}^{l-1})(\bar{U}_p^{l-1,1}))^2]^{1/2} + 
\|\varphi\|^2\overline{\mathbb{E}}[(\mathbf{G}_{p}^l(U_p^{l,1})-\mathbf{G}_{p}^{l-1}(\bar{U}_p^{l-1,1}))^2]^{1/2}.
\end{eqnarray*}

\begin{lem}\label{lem:l2_cpf}
Assume (D\ref{hyp_diff:1}). Then for any $n\in\mathbb{N}_0$, there exists a $C<+\infty$ such that for any $(l,N,\varphi)\in\mathbb{N}\times \mathbb{N}\times\mathcal{B}_b(\mathbb{R}^{d_x})$
$$
\overline{\mathbb{E}}\left[
\max\{\check{\pi}_n^{l,N}(\mathbf{G}_n^l\otimes 1)^{-2},\check{\pi}_n^{l,N}(1\otimes\mathbf{G}_n^{l-1})^{-2}\}
(\check{\pi}_n^{l,N}-\check{\pi}_n^l)((\mathbf{G}_n^l\pmb{\varphi}^l)\otimes 1 - 1\otimes(\mathbf{G}_n^{l-1}\pmb{\varphi}^{l-1}))^2\right] \leq 
$$
$$
C\left(
\frac{1}{N}\sum_{p=0}^nT_{p,n}^{l,1}(\varphi)
+ \frac{1}{N^{3/2}}\sum_{p=0}^{n-1}T_{p,n}^{l,2}(\varphi)
\right).
$$
\end{lem}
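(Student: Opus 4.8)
The plan is to adapt the standard telescoping (martingale) decomposition of particle-filter errors (as in \cite{delmoral}) to the coupled two-level setting of \cite{mlpf}, tracking the extra smallness coming from the coupling and carefully isolating the leading $O(N^{-1})$ contribution from an $O(N^{-3/2})$ remainder. I would first dispose of the random prefactor: since $\check{\pi}_n^{l,N}(\mathbf{G}_n^l\otimes 1)=\frac{1}{N}\sum_{i=1}^N\mathbf{G}_n^l(u_n^{l,i})\geq\underline{\mathbf{G}}_n^l$ almost surely (and similarly for the coarse marginal), the weight is dominated by $\max\{(\underline{\mathbf{G}}_n^l)^{-2},(\underline{\mathbf{G}}_n^{l-1})^{-2}\}$, which by the moment bounds at the end of Section \ref{sec_app:note} has finite moments uniform in $l$. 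This is exactly the weight that stays attached to the squared difference in $T_{p,n}^{l,1}$; in the higher-order terms it is instead split off by Cauchy--Schwarz, its uniform moments being absorbed into $C$.

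The core step is to write, with $\Psi:=(\mathbf{G}_n^l\pmb{\varphi}^l)\otimes 1 - 1\otimes(\mathbf{G}_n^{l-1}\pmb{\varphi}^{l-1})$, the error $(\check{\pi}_n^{l,N}-\check{\pi}_n^l)(\Psi)$ as a telescoping sum $\sum_{p=0}^n$ of one-step sampling errors $[\check{\pi}_p^{l,N}-\check{\Phi}_p^l(\check{\pi}_{p-1}^{l,N})]$ evaluated at the transported functional built from $\mathbf{D}_{p,n}^l$ acting on the fine marginal and $\mathbf{D}_{p,n}^{l-1}$ acting on the coarse marginal. Because, conditional on the $\sigma$-algebra $\mathcal{G}_{p-1}$ generated by the history and the data, the particles at time $p$ are i.i.d.\ draws from $\check{\Phi}_p^l(\check{\pi}_{p-1}^{l,N})$, each summand is a normalized sum of $N$ conditionally centred, conditionally independent terms. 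Squaring and taking $\overline{\mathbb{E}}$, conditional orthogonality of the increments removes the cross terms across distinct $p$, leaving $\sum_p$ of conditional second moments, each equal to $N^{-1}$ times $\check{\Phi}_p^l(\check{\pi}_{p-1}^{l,N})$ applied to the squared transported functional. Since that functional is precisely a difference $\mathbf{D}_{p,n}^l(\mathbf{G}_n^l\pmb{\varphi}^l)(U_p^{l})-\mathbf{D}_{p,n}^{l-1}(\mathbf{G}_n^{l-1}\pmb{\varphi}^{l-1})(\bar{U}_p^{l-1})$ of transported test functions at a coupled particle pair, taking expectations reproduces the $N^{-1}\sum_p T_{p,n}^{l,1}(\varphi)$ contribution.

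The $N^{-3/2}$ terms arise because both $\mathbf{D}_{p,n}$ and $\check{\Phi}_p^l$ are defined through the \emph{empirical} normalizing constants $\check{\pi}_{p-1}^{l,N}(\mathbf{G}_{p-1}^l\otimes 1)$ rather than the true ones. Replacing empirical by true normalizing constants, and replacing $\check{\pi}_{p-1}^{l,N}$ by $\check{\pi}_{p-1}^l$ inside $\check{\Phi}_p^l$, costs an extra $O(N^{-1/2})$ fluctuation, which I would control using the particle-filter $\mathbb{L}_r$-bounds of Section \ref{app:pf}; combining this with Cauchy--Schwarz against the square-root and potential-difference terms constituting $T_{p,n}^{l,2}$ yields the $N^{-3/2}\sum_p T_{p,n}^{l,2}(\varphi)$ contribution.

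The main obstacle I expect is the bookkeeping around these empirical normalizing constants together with the maximal-coupling resampling in $\check{\Phi}_p^l$: unlike the clean linear evolution of the single-level filter, $\check{\Phi}_p^l$ is a genuinely nonlinear functional of $\check{\pi}_{p-1}^{l,N}$ through the minimum $F_{p-1,\mu,l}\wedge F_{p-1,\mu,l-1}$, so decomposing each one-step error into a conditionally centred leading part plus a remainder, and verifying that the remainder carries the extra $N^{-1/2}$ rather than contaminating the leading $N^{-1}$ term, is where the care is required. This is also precisely where the argument departs from \cite{mlpf}: here one additionally averages over the data through $\overline{\mathbb{E}}$, so the potentials $\mathbf{G}$ are random and their moment bounds must be kept uniform in $l$ throughout the decomposition.
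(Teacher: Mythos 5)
Your proposal is correct and follows essentially the same route as the paper's proof: the martingale-plus-remainder decomposition of $(\check{\pi}_n^{l,N}-\check{\pi}_n^l)$ through the $\mathbf{D}_{p,n}$ operators (the paper invokes \cite[Lemma 6.3]{ddj2012}), conditional second-moment (Marcinkiewicz--Zygmund) control of the one-step sampling errors yielding the $N^{-1}\sum_{p}T_{p,n}^{l,1}(\varphi)$ term, Cauchy--Schwarz/H\"older combined with the $\mathbb{L}_r$ particle-filter bounds of Section \ref{app:pf} on the remainder terms---whose factors carry the coupling differences $\mathbf{G}_p^l\otimes 1-1\otimes\mathbf{G}_p^{l-1}$ and $\mathbf{D}_{p,n}^l\otimes 1-1\otimes\mathbf{D}_{p,n}^{l-1}$---yielding the $N^{-3/2}\sum_p T_{p,n}^{l,2}(\varphi)$ term, and the random prefactor dominated almost surely by $\max\{(\underline{\mathbf{G}}_{n}^l)^{-2},(\underline{\mathbf{G}}_{n}^{l-1})^{-2}\}$ exactly as in the paper. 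One cosmetic correction: $\mathbf{D}_{p,n}^l$ is defined with the \emph{true} normalizer $\pi_p^l(\mathbf{Q}_{p,n}^l(1))$, the empirical normalizing constants entering only through $\check{\Phi}_p^l$ and the remainder ratios such as $\check{\pi}_p^{l,N}(\mathbf{D}_{p,n}^l(\mathbf{G}_n^l\pmb{\varphi}^l)\otimes 1)/\check{\pi}_p^{l,N}(\mathbf{G}_p^l\otimes 1)$, but this does not affect your argument.
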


\begin{proof}
We have the following standard Martingale plus remainder decomposition \cite[Lemma 6.3]{ddj2012}
\begin{eqnarray*}
(\check{\pi}_n^{l,N}-\check{\pi}_n^l)((\mathbf{G}_n^l\pmb{\varphi}^l)\otimes 1 - 1\otimes(\mathbf{G}_n^{l-1}\pmb{\varphi}^{l-1})) & = & \sum_{p=0}^n(\check{\pi}_p^{l,N}-\check{\Phi}_p^l(\check{\pi}_{p-1}^{l,N}))(\mathbf{D}_{p,n}^l(\mathbf{G}_n^l\pmb{\varphi}^l)\otimes 1 - 1\otimes \mathbf{D}_{p,n}^{l-1}(\mathbf{G}_n^{l-1}\pmb{\varphi}^{l-1})) + \\ & &
 \sum_{p=0}^{n-1}\Big\{\frac{\check{\pi}_p^{l,N}(\mathbf{D}_{p,n}^l(\mathbf{G}_n^l\pmb{\varphi}^l)\otimes 1)}{\check{\pi}_p^{l,N}(\mathbf{G}_p^l\otimes 1)}[\check{\pi}_p^l-\check{\pi}_p^{l,N}](\mathbf{G}_p^l\otimes 1) - \\ & &
\frac{\check{\pi}_p^{l,N}(1\otimes\mathbf{D}_{p,n}^{l-1}(\mathbf{G}_n^{l-1}\pmb{\varphi}^{l-1}))}{\check{\pi}_p^{l,N}(1\otimes\mathbf{G}_p^{l-1})}[\check{\pi}_p^l-\check{\pi}_p^{l,N}](1\otimes\mathbf{G}_p^{l-1})
\Big\}.
\end{eqnarray*}
Using the $C_2-$inequality multiple times:
\begin{equation}\label{eq:tp}
\overline{\mathbb{E}}[
\max\{\check{\pi}_n^{l,N}(\mathbf{G}_n^l\otimes 1)^{-2},\check{\pi}_n^{l,N}(1\otimes\mathbf{G}_n^{l-1})^{-2}\}
(\check{\pi}_n^{l,N}-\check{\pi}_n^l)((\mathbf{G}_n^l\pmb{\varphi}^l)\otimes 1 - 1\otimes(\mathbf{G}_n^{l-1}\pmb{\varphi}^{l-1}))^2] \leq C\Big(\sum_{p=0}^n\overline{\mathbb{E}}[T_1(p)^2] + \sum_{p=0}^{n-1}\overline{\mathbb{E}}[T_2(p)^2]\Big)
\end{equation}
where
\begin{eqnarray*}
T_1(p) & := & 
\max\{\check{\pi}_pn^{l,N}(\mathbf{G}_n^l\otimes 1)^{-2},\check{\pi}_n^{l,N}(1\otimes\mathbf{G}_n^{l-1})^{-2}\}^{1/2}
(\check{\pi}_p^{l,N}-\check{\Phi}_p^l(\check{\pi}_{p-1}^{l,N}))(\mathbf{D}_{p,n}^l(\mathbf{G}_n^l\pmb{\varphi}^l)\otimes 1 - 1\otimes \mathbf{D}_{p,n}^{l-1}(\mathbf{G}_n^{l-1}\pmb{\varphi}^{l-1})) \\
T_2(p) & := & 
\max\{\check{\pi}_n^{l,N}(\mathbf{G}_n^l\otimes 1)^{-2},\check{\pi}_n^{l,N}(1\otimes\mathbf{G}_n^{l-1})^{-2}\}^{1/2}\times \\ & &
\left(\frac{\check{\pi}_p^{l,N}(\mathbf{D}_{p,n}^l(\mathbf{G}_n^l\pmb{\varphi}^l)\otimes 1)}{\check{\pi}_p^{l,N}(\mathbf{G}_p^l\otimes 1)}[\check{\pi}_p^l-\check{\pi}_p^{l,N}](\mathbf{G}_p^l\otimes 1) - 
\frac{\check{\pi}_p^{l,N}(1\otimes\mathbf{D}_{p,n}^{l-1}(\mathbf{G}_n^{l-1}\pmb{\varphi}^{l-1}))}{\check{\pi}_p^{l,N}(1\otimes\mathbf{G}_p^{l-1})}[\check{\pi}_p^l-\check{\pi}_p^{l,N}](1\otimes\mathbf{G}_p^{l-1})\right).
\end{eqnarray*}
It thus suffices to control the terms $T_1(p)$, $p\in\{0,1,\dots,n\}$ and  $T_2(p)$, $p\in\{0,1,\dots,n-1\}$ in an appropriate way. 

Now, using \eqref{eq:g_lower} and applying the conditional Marcinkiewicz-Zygmund inequality
\begin{equation}\label{eq:t1p}
\overline{\mathbb{E}}[T_1(p)^2] \leq \frac{C}{N}\overline{\mathbb{E}}[
\max\{(\underline{\mathbf{G}}_{n}^l)^{-2},(\underline{\mathbf{G}}_{n}^{l-1})^{-2}\}
(\mathbf{D}_{p,n}^l(\mathbf{G}_n^l\pmb{\varphi}^l)(U_p^{l,1})-\mathbf{D}_{p,n}^{l-1}(\mathbf{G}_n^{l-1}\pmb{\varphi}^{l-1})(\bar{U}_p^{l-1,1}))^2].
\end{equation}
For $T_2(p)$ we have
$$
T_2(p) = \max\{\check{\pi}_n^{l,N}(\mathbf{G}_n^l\otimes 1)^{-2},\check{\pi}_n^{l,N}(1\otimes\mathbf{G}_n^{l-1})^{-2}\}^{1/2}\left(T_3(p) + T_4(p) + T_5(p)\right)
$$
where
\begin{eqnarray}
T_3(p) & := & [\check{\pi}_p^l-\check{\pi}_p^{l,N}](\mathbf{G}_p^l\otimes 1)\frac{\check{\pi}_p^{l,N}(\mathbf{D}_{p,n}^l(\mathbf{G}_n^l\pmb{\varphi}^l)\otimes 1)}{\check{\pi}_p^{l,N}(\mathbf{G}_p^l\otimes 1)\check{\pi}_p^{l,N}(1\otimes\mathbf{G}_p^{l-1})}\Big\{\check{\pi}_p^{l,N}(1\otimes\mathbf{G}_p^{l-1})-\check{\pi}_p^{l,N}(\mathbf{G}_p^l\otimes 1)\Big\}\label{eq:t3p}\\
T_4(p) & := & [\check{\pi}_p^l-\check{\pi}_p^{l,N}](\mathbf{G}_p^l\otimes 1)\frac{1}{\check{\pi}_p^{l,N}(1\otimes\mathbf{G}_p^{l-1})}\Big\{\check{\pi}_p^{l,N}(\mathbf{D}_{p,n}^l(\mathbf{G}_n^l\pmb{\varphi}^l)\otimes 1)-\check{\pi}_p^{l,N}(1\otimes\mathbf{D}_{p,n}^{l-1}(\mathbf{G}_n^{l-1}\pmb{\varphi}^{l-1}))\Big\}\label{eq:t4p}\\
T_5(p) & := & \frac{\check{\pi}_p^{l,N}(1\otimes\mathbf{D}_{p,n}^{l-1}(\mathbf{G}_n^{l-1}\pmb{\varphi}^{l-1}))}{\check{\pi}_p^{l,N}(1\otimes\mathbf{G}_p^{l-1})}[\check{\pi}_p^l-\check{\pi}_p^{l,N}](\mathbf{G}_p^l\otimes 1-1\otimes\mathbf{G}_p^{l-1})\label{eq:t5p}.
\end{eqnarray}
By using \eqref{eq:g_lower} and the $C_2-$inequality three times 
$$
\overline{\mathbb{E}}[T_2(p)^2]\leq C\sum_{j=3}^5\overline{\mathbb{E}}[\max\{(\underline{\mathbf{G}}_{n}^l)^{-2},(\underline{\mathbf{G}}_{n}^{l-1})^{-2}\}T_j(p)^2]
$$
so we consider bounding the R.H.S.~of this inequality.

For $T_3(p)$, using Cauchy-Schwarz then H\"older gives
\begin{eqnarray*}
\overline{\mathbb{E}}[\max\{(\underline{\mathbf{G}}_{n}^l)^{-2},(\underline{\mathbf{G}}_{n}^{l-1})^{-2}\}T_3(p)^2] & \leq &  \overline{\mathbb{E}}\Big[\Big|\frac{\check{\pi}_p^{l,N}(\mathbf{D}_{p,n}^l(\mathbf{G}_n^l\pmb{\varphi}^l)\otimes 1)\max\{(\underline{\mathbf{G}}_{n}^l)^{-2},(\underline{\mathbf{G}}_{n}^{l-1})^{-2}\}}{\check{\pi}_p^{l,N}(\mathbf{G}_p^l\otimes 1)\check{\pi}_p^{l,N}(1\otimes\mathbf{G}_p^{l-1})}\Big|^{12}
\Big]^{1/6}\times \\
& & 
\overline{\mathbb{E}}[[\check{\pi}_p^l-\check{\pi}_p^{l,N}](\mathbf{G}_p^l\otimes 1)^{12}]^{1/6}
\overline{\mathbb{E}}[|\check{\pi}_p^{l,N}(1\otimes\mathbf{G}_p^{l-1})-\check{\pi}_p^{l,N}(\mathbf{G}_p^l\otimes 1)|^6]^{1/6}\times \\
& & 
\overline{\mathbb{E}}[|\check{\pi}_p^{l,N}(1\otimes\mathbf{G}_p^{l-1})-\check{\pi}_p^{l,N}(\mathbf{G}_p^l\otimes 1)|^2]^{1/2}.
\end{eqnarray*}
For the left-most term on the R.H.S.~one can apply H\"older, Lemma \ref{lem:res1} and Corollary \ref{cor:pf_cor}. 
For the term after on the R.H.S.~one can apply Proposition \ref{prop:lp_pf}.
For the next term, one can use \eqref{eq:g_upper}.
For the right-most term on the R.H.S.~one has 
$$
\overline{\mathbb{E}}[|\check{\pi}_p^{l,N}(1\otimes\mathbf{G}_p^{l-1})-\check{\pi}_p^{l,N}(\mathbf{G}_p^l\otimes 1)|^2]^{1/2}\leq \overline{\mathbb{E}}[(\mathbf{G}_{p}^l(U_p^{l,1})-\mathbf{G}_{p}^{l-1}(\bar{U}_p^{l-1,1}))^2]^{1/2}.
$$

Hence, we have that
\begin{equation}\label{eq:t3pp}
\overline{\mathbb{E}}[T_3(p)^2] \leq \frac{C\|\varphi\|^2}{N^{2}}\overline{\mathbb{E}}[(\mathbf{G}_{p}^l(U_p^{l,1})-\mathbf{G}_{p}^{l-1}(\bar{U}_p^{l-1,1}))^2]^{1/2}.
\end{equation}
For $T_4(p), T_5(p)$, using almost the same strategy , except for using Proposition \ref{prop:lp_pf} for terms such as (for any $r\in\mathbb{N}$)
$$
\mathbb{E}\left[\Big|\check{\pi}_p^{l,N}(\mathbf{D}_{p,n}^l(\mathbf{G}_n^l\pmb{\varphi}^l)\otimes 1)-\check{\pi}_p^{l,N}(1\otimes\mathbf{D}_{p,n}^{l-1}(\mathbf{G}_n^{l-1}\pmb{\varphi}^{l-1}))\Big|^r\right]^{1/r}
$$
yields
\begin{equation}\label{eq:t4pp}
\overline{\mathbb{E}}[\max\{(\underline{\mathbf{G}}_{n}^l)^{-2},(\underline{\mathbf{G}}_{n}^{l-1})^{-2}\}T_4(p)^2] \leq \frac{C}{N^{3/2}}\overline{\mathbb{E}}[(\mathbf{D}_{p,n}^l(\mathbf{G}_n^l\pmb{\varphi}^l)(U_p^{l,1})- \mathbf{D}_{p,n}^{l-1}(\mathbf{G}_n^{l-1}\pmb{\varphi}^{l-1})(\bar{U}_p^{l-1,1}))^2]^{1/2}.
\end{equation}
and 
\begin{equation}\label{eq:t5pp}
\overline{\mathbb{E}}[\max\{(\underline{\mathbf{G}}_{n}^l)^{-2},(\underline{\mathbf{G}}_{n}^{l-1})^{-2}\}T_5(p)^2] \leq \frac{C\|\varphi\|^2}{N^{3/2}}\overline{\mathbb{E}}[(\mathbf{G}_{p}^l(U_p^{l,1})-\mathbf{G}_{p}^{l-1}(\bar{U}_p^{l-1,1}))^2]^{1/2}.
\end{equation}

Combining \eqref{eq:t3pp}-\eqref{eq:t5pp} gives
\begin{equation}\label{eq:t2p}
\overline{\mathbb{E}}[T_2(p)^2] \leq \frac{C}{N^{3/2}}T_{p,n}^{l,2}(\varphi).
\end{equation}
The proof is easily completed by noting the bounds in \eqref{eq:tp}, \eqref{eq:t1p} and \eqref{eq:t2p}.
\end{proof}

\subsubsection{Additional Technical Results for Coupled Particle Filters}\label{app:cpf1}

The following section is essentially an adaptation of \cite[Lemmata D.3-D.4]{mlpf}. Many of the arguments are very similar to
that article, with a modification to the context here. The entire proofs are included for completeness of this paper.

For $(i,l,n)\in\{1,\dots,N\}\times \mathbb{N}\times\mathbb{N}_0$:
\begin{itemize}
\item{$\widehat{U}_{n}^{l,i},\widehat{\bar{U}}_{n}^{l-1,i}$ denote the particles immediately after resampling}
\item{$(I_{n}^{l,i},\bar{I}_{n}^{l-1,i})\in\{1,\dots,N\}$ represent the resampled indices of $(u_n^{l,i},\bar{u}_n^{l-1,i})$ and
let $I_{n}^l(i):=I_{n}^{l,i}$ and  $\bar{I}_{n}^{l-1}(i):=\bar{I}_{n}^{l-1,i}$.}
\end{itemize}
For $(l,n)\in\mathbb{N}\times\mathbb{N}_0$, let $\mathsf{S}_n^l$ be the particle indices that choose the same ancestor at each resampling stage: 
	\begin{align*}
	\mathsf{S}_n^l=\{i\in\{1,\ldots, N\}: &I_{n}^l(i)=\bar{I}_{n}^{l-1}(i),I_{n-1}^l\circ I_{n}^l(i)=\bar{I}_{n-1}^{l-1}\circ \bar{I}_{n}^{l-1}(i),\dots,
	I_{0}^l\circ\cdots\circ I_{n}^l(i)=\bar{I}_{0}^{l-1}\circ\cdots\circ \bar{I}_{n}^{l-1}(i)\}. 
	\end{align*}
	For $n=-1$, set $\mathsf{S}_n^l=\{1,\ldots, N\}$. 
	Let, for $(l,n)\in\mathbb{N}\times\mathbb{N}_0$
\begin{align*}
\mathcal{G}_n^l=&\sigma\left(\left\{U_{p}^{l,i},\bar{U}_{p}^{l-1,i}, \widehat{U}_{p}^{l,i}, \widehat{\bar{U}}_{p}^{l-1,i}, I_{p}^l, \bar{I}_{p}^{l-1};0\leq p<n,1\leq i\le N\right\}\cup
\left\{U_{n}^{l,i},\bar{U}_{n}^{l-1,i},1\leq i\le N\right\}\right)\vee\mathcal{Y}_{n+1},\\
\widehat{\mathcal{G}}_n^l=&\sigma\left(\left\{U_{p}^{l,i},\bar{U}_{p}^{l-1,i}, \widehat{U}_{p}^{l,i}, \widehat{\bar{U}}_{p}^{l,i}, I_{p}^l, \bar{I}_{p}^{l-1};0\leq p<n,1\leq i\le N\right\}\cup
\left\{U_{n}^{l,i},\bar{U}_{n}^{l-1,i},\widehat{U}_{n}^{l,i},\widehat{\bar{U}}_{n}^{l-1,i},1\leq i\le N\right\}\right)\vee\mathcal{Y}_{n+1}. 
\end{align*}
To avoid ambiguity in the subsequent notations, we set for $(i,l,n)\in\{1,\dots,N\}\times \mathbb{N}\times\mathbb{N}_0$
\begin{eqnarray*}
u_{n}^{l,i} & = & (x_{n,n}^{l,i},x_{n,n+\Delta_l}^{l,i},\dots,x_{n,n+1}^{l,i}) \in E_l \\
\bar{u}_{n}^{l,i} & = & (\bar{x}_{n,n}^{l-1,i},\bar{x}_{n,n+\Delta_{l-1}}^{l-1,i},\dots,\bar{x}_{n,n+1}^{l-1,i}) \in E_{l-1}.
\end{eqnarray*}

\begin{lem}\label{lem:pairwise_difference}
Assume (D\ref{hyp_diff:1}). Then for any $(n,r)\in\mathbb{N}_0\times\mathbb{N}$, there exists a $C<+\infty$ such that for any $(l,N)\in\mathbb{N}\times\mathbb{N}$
\begin{align*} 
	\overline{\mathbb{E}}\left[\frac{1}{N}\sum_{i\in \mathsf{S}_{n-1}^l} \|X_{n,n+1}^{l,i}-\bar{X}_{n,n+1}^{l-1,i}\|_2^r\right]^{1/r}\le C\Delta_{l}^{1/2}.
\end{align*}
\end{lem}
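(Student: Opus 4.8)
The plan is to follow the structure of \cite[Lemmata D.3--D.4]{mlpf}, adapted to the present continuous-time setting in which the resampling weights $\mathbf{G}_p^l$ are random functionals of the observation increments. The argument rests on two ingredients: a one-step strong-error estimate for the synchronously coupled Euler scheme over a single unit interval, and an induction over the unit times that propagates this estimate through the resampling steps, exploiting the fact that for $i\in\mathsf{S}_{n-1}^l$ the parent index $j:=I_{n-1}^l(i)=\bar I_{n-1}^{l-1}(i)$ lies in $\mathsf{S}_{n-2}^l$.

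First I would establish the one-step bound. For $i\in\mathsf{S}_{n-1}^l$ the fine and coarse components of the path $u_n^{l,i}$ are generated by $\check M^l$ using common Brownian increments in \eqref{eq:disc_state}, so over $[n,n+1]$ the schemes with step-sizes $\Delta_l$ and $\Delta_{l-1}=2\Delta_l$ are driven by the same noise, while their starting positions $X_{n,n}^{l,i},\bar X_{n,n}^{l-1,i}$ are the terminal points of the resampled ancestor $\widehat U_{n-1}^{l,i}$ and hence $\widehat{\mathcal{G}}_{n-1}^l$-measurable. A discrete Gronwall argument using the boundedness, Lipschitz continuity and uniform ellipticity of $b,\sigma$ from (D\ref{hyp_diff:1}) (as in the classical multilevel analysis of \cite{giles}) yields
$$\overline{\mathbb{E}}\big[\|X_{n,n+1}^{l,i}-\bar X_{n,n+1}^{l-1,i}\|_2^r\mid \widehat{\mathcal{G}}_{n-1}^l\big]\le C\big(\|X_{n,n}^{l,i}-\bar X_{n,n}^{l-1,i}\|_2^r+\Delta_l^{r/2}\big).$$
The additive $\Delta_l^{r/2}$ is the dominant mismatch between one coarse and two fine Euler updates and originates in the non-constant diffusion coefficient; this is exactly why the rate is $\Delta_l^{1/2}$ rather than $\Delta_l$.

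Next I would run the induction. Set $D_n:=\overline{\mathbb{E}}[\frac1N\sum_{i\in\mathsf{S}_{n-1}^l}\|X_{n,n+1}^{l,i}-\bar X_{n,n+1}^{l-1,i}\|_2^r]$. For $i\in\mathsf{S}_{n-1}^l$ the ancestors coincide, so $\|X_{n,n}^{l,i}-\bar X_{n,n}^{l-1,i}\|_2=\|X_{n-1,n}^{l,j}-\bar X_{n-1,n}^{l-1,j}\|_2$ with $j=I_{n-1}^l(i)\in\mathsf{S}_{n-2}^l$. Grouping the sum by parent $j$ and conditioning on $\mathcal{G}_{n-1}^l$ (so that the weights $\mathbf{G}_{n-1}^l$ and the terminal points $X_{n-1,n}^{l,j}$ are known), the conditional expected offspring count of $j$ is bounded by $N$ times its normalized weight; inserting this and combining with the one-step bound gives, after taking total expectation, the recursion $D_n\le C(D_{n-1}+\Delta_l^{r/2})$. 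The base case is $D_0\le C\Delta_l^{r/2}$, since at $n=0$ all particles start at $(x_*,x_*)$ and $\mathsf{S}_{-1}^l=\{1,\dots,N\}$. Iterating over the finitely many steps up to the fixed $n$ gives $D_n\le C\Delta_l^{r/2}$, and taking $r$-th roots is the claim.

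The main obstacle is the resampling step. Because the weights here are random functionals of the data, bounded only by the random quantities $\overline{\mathbf{G}}_{n}^l,\underline{\mathbf{G}}_{n}^l$ of \eqref{eq:g_upper}--\eqref{eq:g_lower} rather than deterministic constants, the normalized weight $\mathbf{G}_{n-1}^l(j)/\sum_k\mathbf{G}_{n-1}^l(k)$ cannot simply be bounded by a constant multiple of $1/N$. One must instead separate the data-dependent weight factor from the coupled-distance factor by a H\"older step and invoke the uniform-in-$l$ $\mathbb{L}_r$ and $\mathbb{L}_{-r}$ moment bounds on $\overline{\mathbf{G}}_{n}^l,\underline{\mathbf{G}}_{n}^l$; since the statement is required for every $r$, the slightly higher moment demanded by H\"older at each resampling stage is available, and all constants remain independent of $l$ and $N$. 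The Euler strong-error estimate itself is routine once $\sigma$ is Lipschitz and elliptic.
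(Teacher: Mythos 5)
Your proposal is correct and follows essentially the same route as the paper's proof: induction on the unit times, a one-step strong-error estimate for the synchronously coupled Euler pair contributing the additive $C\Delta_l^{1/2}$ (the paper imports this from the first lines of \cite[Lemma D.3]{mlpf}), and control of the resampled-parent term by writing the conditional law of the coupled ancestors as the normalized minimum-weight mixture over $\mathsf{S}_{n-2}^l$ and bounding the normalized weights by the ratios of $\overline{\mathbf{G}}_{n-1}^l$ and $\underline{\mathbf{G}}_{n-1}^l$. The only (cosmetic) deviation is your final decoupling device: you invoke H\"older together with running the induction at all moments $r$ simultaneously, whereas the paper exploits that under $\overline{\mathbb{P}}$ the observation increments on $[n-1,n]$ are independent of the coupled distances and of $\mathsf{S}_{n-2}^l$, so taking expectations with respect to that stretch of data factorizes the bound exactly, with no moment inflation; both devices yield the same recursion $D_n\leq C(D_{n-1}+\Delta_l^{r/2})$ with constants independent of $l$ and $N$.
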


\begin{proof}
The proof is by induction on $n$. The case $n=0$ follows immediately, for instance by \cite[Proposition D.1]{mlpf}. For a general $n$; following the first four lines of the proof
of \cite[Lemma D.3]{mlpf}, one has
$$
\overline{\mathbb{E}}\left[\frac{1}{N}\sum_{i\in \mathsf{S}_{n-1}^l} \|X_{n,n+1}^{l,i}-\bar{X}_{n,n+1}^{l-1,i}\|_2^r\right]^{1/r}\leq
C\overline{\mathbb{E}}\left[\frac{1}{N}\sum_{i\in \mathsf{S}_{n-1}^l} \|X_{n-1,n}^{l,I_{n-1}^{l,i}}-\bar{X}_{n-1,n}^{l-1,\bar{I}_{n-1}^{l,i}}\|_2^r\right]^{1/r}+C\Delta_{l}^{1/2}.
$$
Now, $I_{n-1}^{l,i}=\bar{I}_{n-1}^{l-1,i}$  for $i\in\mathsf{S}_{n-1}^l$. 
The conditional distribution of $(X_{n-1,n}^{l,I_{n-1}^{l,i}},\bar{X}_{n-1,n}^{l,\bar{I}_{n-1,n}^{l,i}})\ (i\in \mathsf{S}_{n-1}^l)$ given $\mathsf{S}_{n-1}^l$  and $\mathcal{G}_{n-1}^l$ is
$$
\frac{\sum_{i\in \mathsf{S}_{n-2}^l} \frac{\mathbf{G}_{n-1}^l(U_{n-1}^{l,i})}{\sum_{k=1}^{N}\mathbf{G}_{n-1}^l(U_{n-1}^{l,k})}\wedge
\frac{\mathbf{G}_{n-1}^{l-1}(\bar{U}_{n-1}^{l-1,i})}{\sum_{k=1}^{N}\mathbf{G}_{n-1}^{l-1}(\bar{U}_{n-1}^{l-1,k})}\delta_{(X_{n-1,n}^{l,i},\bar{X}_{n-1,n}^{l,i})}}
{\sum_{i\in \mathsf{S}_{n-2}^l} \frac{\mathbf{G}_{n-1}^l(U_{n-1}^{l,i})}{\sum_{k=1}^{N}\mathbf{G}_{n-1}^l(U_{n-1}^{l,k})}\wedge
\frac{\mathbf{G}_{n-1}^{l-1}(\bar{U}_{n-1}^{l-1,i})}{\sum_{k=1}^{N}\mathbf{G}_{n-1}^{l-1}(\bar{U}_{n-1}^{l-1,k})}}.
$$
Now, we have, almost surely:
\begin{eqnarray}
\overline{\mathbb{E}}\left[\left.\frac{\textrm{Card}(\mathsf{S}_{n-1}^l)}{N}\right|\mathcal{G}_{n-1}^l\right] 
& = & \sum_{i\in \mathsf{S}_{n-2}^l} \frac{\mathbf{G}_{n-1}^l(U_{n-1}^{l,i})}{\sum_{k=1}^{N}\mathbf{G}_{n-1}^l(U_{n-1}^{l,k})}\wedge
\frac{\mathbf{G}_{n-1}^{l-1}(\bar{U}_{n-1}^{l-1,i})}{\sum_{k=1}^{N}\mathbf{G}_{n-1}^{l-1}(\bar{U}_{n-1}^{l-1,k})}\nonumber\\
& \leq & \frac{\overline{\mathbf{G}}_{n-1}^l}{\underline{\mathbf{G}}_{n-1}^l}\frac{\textrm{Card}(\mathsf{S}_{n-2}^l)}{N}.\label{eq:mistake1} 
\end{eqnarray}

Therefore
\begin{align*}
&\overline{\mathbb{E}}\left[\frac{1}{N}\sum_{i\in \mathsf{S}_{n-1}^l} \|X_{n-1,n}^{l,I_{n-1}^{l,i}}-\bar{X}_{n-1,n}^{l-1,\bar{I}_{n-1}^{l,i}}\|_2^r\right]\\
&=\overline{\mathbb{E}}\left[\frac{1}{N}\sum_{i\in \mathsf{S}_{n-1}^l} \overline{\mathbb{E}}\left[\left.\|X_{n-1,n}^{l,I_{n-1}^{l,i}}-\bar{X}_{n-1,n}^{l-1,\bar{I}_{n-1}^{l-1,i}}\|_2^r\right|\mathsf{S}_{n-1}^l,\mathcal{G}_{n-1}^l\right]\right]\\
&=\overline{\mathbb{E}}\left[\frac{\textrm{Card}(\mathsf{S}_{n-1}^l)}{N}\left\{
\frac{\sum_{i\in \mathsf{S}_{n-2}^l} \frac{\mathbf{G}_{n-1}^l(U_{n-1}^{l,i})}{\sum_{k=1}^{N}\mathbf{G}_{n-1}^l(U_{n-1}^{l,k})}\wedge
\frac{\mathbf{G}_{n-1}^{l-1}(\bar{U}_{n-1}^{l-1,i})}{\sum_{k=1}^{N}\mathbf{G}_{n-1}^{l-1}(\bar{U}_{n-1}^{l-1,k})}\|X_{n-1,n}^{l,i}-\bar{X}_{n-1,n}^{l,i}\|_2^r}
{\sum_{i\in \mathsf{S}_{n-2}^l} \frac{\mathbf{G}_{n-1}^l(U_{n-1}^{l,i})}{\sum_{k=1}^{N}\mathbf{G}_{n-1}^l(U_{n-1}^{l,k})}\wedge
\frac{\mathbf{G}_{n-1}^{l-1}(\bar{U}_{n-1}^{l-1,i})}{\sum_{k=1}^{N}\mathbf{G}_{n-1}^{l-1}(\bar{U}_{n-1}^{l-1,k})}}
\right\}\right]\\
&=\overline{\mathbb{E}}\left[\overline{\mathbb{E}}\left[\left.\frac{\textrm{Card}(\mathsf{S}_{n-1}^l)}{N}\right|\mathcal{G}_{n-1}^l\right]\left\{
\frac{\sum_{i\in \mathsf{S}_{n-2}^l} \frac{\mathbf{G}_{n-1}^l(U_{n-1}^{l,i})}{\sum_{k=1}^{N}\mathbf{G}_{n-1}^l(U_{n-1}^{l,k})}\wedge
\frac{\mathbf{G}_{n-1}^{l-1}(\bar{U}_{n-1}^{l-1,i})}{\sum_{k=1}^{N}\mathbf{G}_{n-1}^{l-1}(\bar{U}_{n-1}^{l-1,k})}\|X_{n-1,n}^{l,i}-\bar{X}_{n-1,n}^{l,i}\|_2^r}
{\sum_{i\in \mathsf{S}_{n-2}^l} \frac{\mathbf{G}_{n-1}^l(U_{n-1}^{l,i})}{\sum_{k=1}^{N}\mathbf{G}_{n-1}^l(U_{n-1}^{l,k})}\wedge
\frac{\mathbf{G}_{n-1}^{l-1}(\bar{U}_{n-1}^{l-1,i})}{\sum_{k=1}^{N}\mathbf{G}_{n-1}^{l-1}(\bar{U}_{n-1}^{l-1,k})}}
\right\}\right]\\
&\le \overline{\mathbb{E}}\left[\overline{\mathbb{E}}\left[\left.\frac{\textrm{Card}(\mathsf{S}_{n-1}^l)}{N}\right|\mathcal{G}_{n-1}^l\right]
\frac{1}{\textrm{Card}(\mathsf{S}_{n-2}^l)}\sum_{i\in \mathsf{S}_{n-2}^l} \|X_{n-1,n}^{l,i}-\bar{X}_{n-1,n}^{l-1,i}\|^r
\Bigg(\frac{\overline{\mathbf{G}}_{n-1}^l}{\underline{\mathbf{G}}_{n-1}^l}\wedge \frac{\overline{\mathbf{G}}_{n-1}^{l-1}}{\underline{\mathbf{G}}_{n-1}^{l-1}}\Big/
\frac{\underline{\mathbf{G}}_{n-1}^l}{\overline{\mathbf{G}}_{n-1}^l}\wedge \frac{\underline{\mathbf{G}}_{n-1}^{l-1}}{\overline{\mathbf{G}}_{n-1}^{l-1}}
\Bigg)
\right].
\end{align*}
Then noting \eqref{eq:mistake1}  and then taking expectations w.r.t.~the data on the time interval $[n-1,n]$ yields:
$$
\overline{\mathbb{E}}\left[\frac{1}{N}\sum_{i\in \mathsf{S}_{n-1}^l} \|X_{n-1,n}^{l,I_{n-1}^{l,i}}-\bar{X}_{n-1,n}^{l-1,\bar{I}_{n-1}^{l,i}}\|_2^r\right]
\le C\overline{\mathbb{E}}\left[\frac{1}{N}\sum_{i\in \mathsf{S}_{n-2}^l} \|X_{n-1,n}^{l,i}-\bar{X}_{n-1,n}^{l-1,i}\|^r\right].
$$
The result hence follows by induction. 
\end{proof}

\begin{cor}\label{cor:pairwise_difference}
Assume (D\ref{hyp_diff:1}). Then for any $(n,r)\in\mathbb{N}\times\mathbb{N}$, there exists a $C<+\infty$ such that for any $(l,N)\in\mathbb{N}\times\mathbb{N}$
\begin{align*} 
	\overline{\mathbb{E}}\left[\frac{1}{N}\sum_{i\in \mathsf{S}_{n-1}^l} \|\widehat{X}_{n-1,n}^{l,i}-\widehat{\bar{X}}_{n-1,n}^{l-1,i}\|_2^r\right]^{1/r}\le C\Delta_{l}^{1/2}.
\end{align*}
\end{cor}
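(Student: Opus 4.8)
The plan is to recognize that the bound claimed here is, up to unwrapping notation, exactly the intermediate estimate established partway through the proof of Lemma \ref{lem:pairwise_difference}, so that the corollary follows by invoking that lemma at the preceding time index. First I would unpack the post-resampling particles: by definition $\widehat{X}_{n-1,n}^{l,i}=X_{n-1,n}^{l,I_{n-1}^{l,i}}$ and $\widehat{\bar{X}}_{n-1,n}^{l-1,i}=\bar{X}_{n-1,n}^{l-1,\bar{I}_{n-1}^{l-1,i}}$, and I would use the defining property of $\mathsf{S}_{n-1}^l$ that $I_{n-1}^l(i)=\bar{I}_{n-1}^{l-1}(i)$ for every $i\in\mathsf{S}_{n-1}^l$. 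Consequently the averaged quantity in the corollary equals $\overline{\mathbb{E}}[\tfrac{1}{N}\sum_{i\in\mathsf{S}_{n-1}^l}\|X_{n-1,n}^{l,I_{n-1}^{l,i}}-\bar{X}_{n-1,n}^{l-1,\bar{I}_{n-1}^{l-1,i}}\|_2^r]$, which is precisely the object treated inside the proof of Lemma \ref{lem:pairwise_difference}.

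Next I would reproduce the conditioning argument from that proof. Conditioning on $\mathsf{S}_{n-1}^l$ and $\mathcal{G}_{n-1}^l$, the shared-ancestor constraint forces each resampled pair to be drawn, via the maximal coupling of the resampling indices, from the common summand of the minimum weights; hence the conditional law of $(X_{n-1,n}^{l,I_{n-1}^{l,i}},\bar{X}_{n-1,n}^{l-1,\bar{I}_{n-1}^{l-1,i}})$ is the normalised mixture of matched Diracs $\delta_{(X_{n-1,n}^{l,i},\bar{X}_{n-1,n}^{l-1,i})}$ with weights given by the wedge of the two normalised potentials over $i\in\mathsf{S}_{n-2}^l$. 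Applying the tower property together with the identity for $\overline{\mathbb{E}}[\textrm{Card}(\mathsf{S}_{n-1}^l)/N\mid\mathcal{G}_{n-1}^l]$ recorded in \eqref{eq:mistake1}, bounding the weight ratios by $\overline{\mathbf{G}}_{n-1}^l/\underline{\mathbf{G}}_{n-1}^l$ and its analogues, and then averaging over the data on $[n-1,n]$ to remove these random ratios (using that the bounds \eqref{eq:g_upper}--\eqref{eq:g_lower} have finite moments not depending on $l$), I obtain
\begin{equation*}
\overline{\mathbb{E}}\left[\frac{1}{N}\sum_{i\in\mathsf{S}_{n-1}^l}\|\widehat{X}_{n-1,n}^{l,i}-\widehat{\bar{X}}_{n-1,n}^{l-1,i}\|_2^r\right]\le C\,\overline{\mathbb{E}}\left[\frac{1}{N}\sum_{i\in\mathsf{S}_{n-2}^l}\|X_{n-1,n}^{l,i}-\bar{X}_{n-1,n}^{l-1,i}\|_2^r\right].
\end{equation*}

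Finally, the right-hand side is exactly the quantity controlled by Lemma \ref{lem:pairwise_difference} with $n$ replaced by $n-1$, so it is at most $(C\Delta_l^{1/2})^r$; taking $r$-th roots completes the proof. The main thing to get right is the identification step, namely that for $i\in\mathsf{S}_{n-1}^l$ the two filters genuinely select the same ancestor so that the resampled difference collapses to a pre-resampling difference over $\mathsf{S}_{n-2}^l$; once this is in place the estimate is immediate from the already-proved lemma, and no genuinely new work is required beyond the conditional-expectation bookkeeping borrowed from its proof.
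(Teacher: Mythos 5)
Your proposal is correct and follows essentially the same route as the paper, whose entire proof is the remark that the corollary ``easily follows from the proof of Lemma \ref{lem:pairwise_difference}'': your identification of $\widehat{X}_{n-1,n}^{l,i}=X_{n-1,n}^{l,I_{n-1}^{l,i}}$ with $I_{n-1}^{l,i}=\bar{I}_{n-1}^{l-1,i}$ on $\mathsf{S}_{n-1}^l$, together with the conditioning step via \eqref{eq:mistake1} and averaging over the data on $[n-1,n]$, reproduces exactly the intermediate estimate in that proof. Invoking Lemma \ref{lem:pairwise_difference} at index $n-1$ to bound the resulting pre-resampling sum over $\mathsf{S}_{n-2}^l$ then closes the argument just as the authors intend.
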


\begin{proof}
Easily follows from the proof of Lemma \ref{lem:pairwise_difference}.
\end{proof}

\begin{lem}\label{small_probability}
Assume (D\ref{hyp_diff:1}). Then for any $n\in\{-1,0,1,\dots\}$, there exists a $C<+\infty$ such that for any $(l,N)\in\mathbb{N}\times\mathbb{N}$
\begin{align*}
1-\overline{\mathbb{E}}\left[\frac{\textrm{\emph{Card}}(\mathsf{S}_n^l)}{N}\right]\le C\Delta_{l}^{1/2}. 
\end{align*}
\end{lem}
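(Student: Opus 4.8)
The plan is to induct on $n$, using the one-step description of the maximally-coupled resampling that was already isolated in the proof of Lemma \ref{lem:pairwise_difference}. Write $s_n := \overline{\mathbb{E}}[\textrm{Card}(\mathsf{S}_n^l)/N]$, so that the assertion is $1-s_n\le C\Delta_l^{1/2}$. The base case $n=-1$ is immediate, since $\mathsf{S}_{-1}^l=\{1,\dots,N\}$ gives $1-s_{-1}=0$. For the inductive step I would condition on $\mathcal{G}_n^l$ and use the identity obtained in the course of proving Lemma \ref{lem:pairwise_difference} (cf.~\eqref{eq:mistake1}),
$$
\overline{\mathbb{E}}\Big[\tfrac{\textrm{Card}(\mathsf{S}_n^l)}{N}\,\Big|\,\mathcal{G}_n^l\Big]=\sum_{i\in\mathsf{S}_{n-1}^l}\big\{W_i^l\wedge W_i^{l-1}\big\},
$$
where $W_i^l:=\mathbf{G}_n^l(U_n^{l,i})/\sum_{k=1}^N\mathbf{G}_n^l(U_n^{l,k})$ and $W_i^{l-1}$ is the analogous normalized weight at level $l-1$.

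Since $\sum_{i=1}^N W_i^l=1$ and $a\wedge b=a-(a-b)_+$, I would decompose
$$
1-\sum_{i\in\mathsf{S}_{n-1}^l}W_i^l\wedge W_i^{l-1}=\underbrace{\sum_{i\notin\mathsf{S}_{n-1}^l}W_i^l}_{(\mathrm{I})}+\underbrace{\sum_{i\in\mathsf{S}_{n-1}^l}(W_i^l-W_i^{l-1})_+}_{(\mathrm{II})}.
$$
Term $(\mathrm{I})$ is the normalized weight carried by the already-decoupled particles, and term $(\mathrm{II})$ is the weight discrepancy produced at the current step among particles sharing an ancestor. For $(\mathrm{I})$ I would bound each weight by the data-dependent envelopes \eqref{eq:g_upper}--\eqref{eq:g_lower}, so that $W_i^l\le\overline{\mathbf{G}}_n^l/(N\underline{\mathbf{G}}_n^l)$ and hence $(\mathrm{I})\le(\overline{\mathbf{G}}_n^l/\underline{\mathbf{G}}_n^l)(1-\textrm{Card}(\mathsf{S}_{n-1}^l)/N)$. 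As $\overline{\mathbf{G}}_n^l/\underline{\mathbf{G}}_n^l$ depends only on the observation increments on $[n,n+1]$, which under $\overline{\mathbb{P}}$ are independent of the genealogy up to time $n$ and hence of $\mathsf{S}_{n-1}^l$, the expectation factorizes; invoking the uniform-in-$l$ moment bounds on $\overline{\mathbf{G}}_n^l,\underline{\mathbf{G}}_n^l$ (through Cauchy--Schwarz) gives $\overline{\mathbb{E}}[(\mathrm{I})]\le C(1-s_{n-1})$, which the induction hypothesis converts into $C\Delta_l^{1/2}$.

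For $(\mathrm{II})$ I would use that for $i\in\mathsf{S}_{n-1}^l$ the fine and coarse particles descend from a common ancestor, so Corollary \ref{cor:pairwise_difference} controls their time-$n$ paths, $\overline{\mathbb{E}}[\tfrac1N\sum_{i\in\mathsf{S}_{n-1}^l}\|\widehat X_{n-1,n}^{l,i}-\widehat{\bar X}_{n-1,n}^{l-1,i}\|_2^r]^{1/r}=O(\Delta_l^{1/2})$. Transferring this state closeness to closeness of the normalized weights requires estimating $\mathbf{G}_n^l(U^{l,i})-\mathbf{G}_n^{l-1}(\bar U^{l-1,i})$ via the Lipschitz continuity of $h$ and of the log-potential together with the ratio of the two normalizing constants, which I would again control using the moment bounds. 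Collecting the two terms, taking full expectations, and using the tower property yields the one-step recursion $1-s_n\le C(1-s_{n-1})+C\Delta_l^{1/2}$; unrolling over the fixed horizon $n$ gives $1-s_n\le C\Delta_l^{1/2}$ with a constant depending on $n$, as required.

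The main obstacle is term $(\mathrm{II})$: securing the sharp $\Delta_l^{1/2}$ rate for the weight discrepancy. The crude route of bounding the potential difference by the Lipschitz constant of $\log\mathbf{G}_n^l$ times the state discrepancy fails, because that Lipschitz constant scales like $\sum_j|\Delta Y_j|=O(\Delta_l^{-1/2})$ and would only deliver an $O(1)$ bound. The resolution -- the genuinely new ingredient relative to the discrete-observation analysis of \cite{mlpf} -- is to average over the observation path and exploit that the leading contribution $\sum_j[h(X_{n+j\Delta_l}^{l,i})-h(\bar X^{l-1,i})]^{*}\Delta Y_j$ is a sum of conditionally centred increments whose $\mathbb{L}_2$ norm is governed by the quadratic variation $\sum_j|h(X^{l,i})-h(\bar X^{l-1,i})|^2\Delta_l=O(\Delta_l)$; combined with the uniform moment control of the potentials, this restores the $\Delta_l^{1/2}$ rate.
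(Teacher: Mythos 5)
Your proposal is correct and follows essentially the same route as the paper: induction via the conditional identity \eqref{eq:mistake1}, a split into the mass carried by already-decoupled indices (handled by the envelopes \eqref{eq:g_upper}--\eqref{eq:g_lower}, independence of the data increments on $[n,n+1]$, and the induction hypothesis) and the weight discrepancy among coupled pairs (handled by Corollary \ref{cor:pairwise_difference}), ending in the same one-step recursion $1-s_n\le C(1-s_{n-1})+C\Delta_l^{1/2}$. The martingale/quadratic-variation mechanism you correctly identify as necessary for the sharp $\Delta_l^{1/2}$ rate on $\mathbf{G}_n^l(U^{l,i})-\mathbf{G}_n^{l-1}(\bar U^{l-1,i})$ is exactly what the paper packages as Lemma \ref{lem:main_nlf_lem}, whose proof (through Lemma \ref{lem:nlf_lem1}) uses the MVT plus the Burkholder--Gundy--Davis martingale-and-remainder argument you describe.
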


\begin{proof}
The proof is by induction, with the initialization clear. We have 
\begin{align}
1-\overline{\mathbb{E}}\left[\frac{\textrm{Card}(\mathsf{S}_n^l)}{N}\right]
	=&\left\{1-\overline{\mathbb{E}}\left[\sum_{i=1}^N \frac{\mathbf{G}_{n}^l(U_{n}^{l,i})}{\sum_{k=1}^{N}\mathbf{G}_{n}^l(U_{n}^{l,k})}\wedge
\frac{\mathbf{G}_{n}^{l-1}(\bar{U}_{n}^{l-1,i})}{\sum_{k=1}^{N}\mathbf{G}_{n}^{l-1}(\bar{U}_{n}^{l-1,k})}\right]\right\}+\nonumber\\
&\overline{\mathbb{E}}\left[\sum_{i\notin\mathsf{S}_{n-1}^l}\frac{\mathbf{G}_{n}^l(U_{n}^{l,i})}{\sum_{k=1}^{N}\mathbf{G}_{n}^l(U_{n}^{l,k})}-
\frac{\mathbf{G}_{n}^{l-1}(\bar{U}_{n}^{l-1,i})}{\sum_{k=1}^{N}\mathbf{G}_{n}^{l-1}(\bar{U}_{n}^{l-1,k})}\right]\nonumber\\
&\leq C\overline{\mathbb{E}}\left[\sum_{i\in\mathsf{S}_{n-1}^l}\left|\frac{\mathbf{G}_{n}^l(U_{n}^{l,i})}{\sum_{k=1}^{N}\mathbf{G}_{n}^l(U_{n}^{l,k})}-
\frac{\mathbf{G}_{n}^{l-1}(\bar{U}_{n}^{l-1,i})}{\sum_{k=1}^{N}\mathbf{G}_{n}^{l-1}(\bar{U}_{n}^{l-1,k})}\right|\right] \nonumber\\
&+ C\overline{\mathbb{E}}\left[\sum_{i\notin\mathsf{S}_{n-1}^l}\left|\frac{\mathbf{G}_{n}^l(U_{n}^{l,i})}{\sum_{k=1}^{N}\mathbf{G}_{n}^l(U_{n}^{l,k})}-
\frac{\mathbf{G}_{n}^{l-1}(\bar{U}_{n}^{l-1,i})}{\sum_{k=1}^{N}\mathbf{G}_{n}^{l-1}(\bar{U}_{n}^{l-1,k})}\right|\right]\label{eq:prob_bound}.
\end{align}
Now, we note that by using the bounds from \eqref{eq:g_upper} and \eqref{eq:g_lower}
\begin{align}
\overline{\mathbb{E}}\left[\sum_{i\notin\mathsf{S}_{n-1}^l} \left|\frac{\mathbf{G}_{n}^l(U_{n}^{l,i})}{\sum_{k=1}^{N}\mathbf{G}_{n}^l(U_{n}^{l,k})}-
\frac{\mathbf{G}_{n}^{l-1}(\bar{U}_{n}^{l-1,i})}{\sum_{k=1}^{N}\mathbf{G}_{n}^{l-1}(\bar{U}_{n}^{l-1,k})}\right|\right] &\leq \overline{\mathbb{E}}\left[\frac{\textrm{Card}((\mathsf{S}_{n-1}^l)^c)}{N}
\Big(\frac{\overline{\mathbf{G}}_{n}^l}{\underline{\mathbf{G}}_{n}^l}+ \frac{\overline{\mathbf{G}}_{n}^{l-1}}{\underline{\mathbf{G}}_{n}^{l-1}}\Big)\right] \nonumber\\
&\leq C\Big(1-\overline{\mathbb{E}}\left[\frac{\textrm{Card}(\mathsf{S}_{n-1}^l)}{N}\right]\Big).\label{eq:small_prob1}
\end{align}

To conclude the result, we must appropriately deal with the left-most term on the R.H.S.~of \eqref{eq:prob_bound}. We have
$$
\overline{\mathbb{E}}\left[\sum_{i\in\mathsf{S}_{n-1}^l}\left|\frac{\mathbf{G}_{n}^l(U_{n}^{l,i})}{\sum_{k=1}^{N}\mathbf{G}_{n}^l(U_{n}^{l,k})}-
\frac{\mathbf{G}_{n}^{l-1}(\bar{U}_{n}^{l-1,i})}{\sum_{k=1}^{N}\mathbf{G}_{n}^{l-1}(\bar{U}_{n}^{l-1,k})}\right|\right] \leq T_1 + T_2
$$
where
\begin{eqnarray*}
T_1 & := & \overline{\mathbb{E}}\left[\frac{1}{\sum_{k=1}^{N}\mathbf{G}_{n}^l(U_{n}^{l,k})}\sum_{i\in\mathsf{S}_{n-1}^l}\left|\mathbf{G}_{n}^l(U_{n}^{l,i})-\mathbf{G}_{n}^{l-1}(\bar{U}_{n}^{l-1,i})\right|\right]\\
T_2 & := &  \overline{\mathbb{E}}\left[\sum_{i\in\mathsf{S}_{n-1}^l}\mathbf{G}_{n}^l(\bar{U}_{n}^{l-1,i})\left(
\frac{\sum_{k=1}^{N}\mathbf{G}_{n}^{l-1}(\bar{U}_{n}^{l-1,k})-\sum_{k=1}^{N}\mathbf{G}_{n}^l(U_{n}^{l,k})}{\sum_{k=1}^{N}\mathbf{G}_{n}^{l-1}(\bar{U}_{n}^{l-1,k})\sum_{k=1}^{N}\mathbf{G}_{n}^l(U_{n}^{l,k})}
\right)\right].
\end{eqnarray*}
For $T_1$, applying Cauchy-Schwarz and recalling the bounds from \eqref{eq:g_upper} and \eqref{eq:g_lower},
$$
T_1 \leq  \overline{\mathbb{E}}\left[\frac{1}{(\underline{\mathbf{G}}_{n}^l)^2}\right]^{1/2}\overline{\mathbb{E}}\left[\left(\frac{1}{N}\sum_{i\in\mathsf{S}_{n-1}^l}\left|\mathbf{G}_{n}^l(U_{n}^{l,i})-\mathbf{G}_{n}^{l-1}(\bar{U}_{n}^{l-1,i})\right|\right)^2\right]^{1/2}.
$$
Then applying Jensen and noting that the left-most term on the R.H.S.~is upper-bounded by a constant that does not depend upon $l$ nor $N$ we have
$$
T_1 \leq  C\overline{\mathbb{E}}\left[\frac{\textrm{Card}(\mathsf{S}_{n-1}^l)}{N}\frac{1}{N}\sum_{i\in\mathsf{S}_{n-1}^l}\left|\mathbf{G}_{n}^l(U_{n}^{l,i})-\mathbf{G}_{n}^{l-1}(\bar{U}_{n}^{l-1,i})\right|^2\right]^{1/2}.
$$
Conditioning upon $\widehat{\mathcal{G}}_{n-1}^l$ and applying Lemma \ref{lem:main_nlf_lem} followed by Corollary \ref{cor:pairwise_difference} yields the upper-bound
$$
T_1 \leq C\Delta_l^{1/2}.
$$
For $T_2$, using the bounds \eqref{eq:g_upper} and \eqref{eq:g_lower}, one has
$$
T_2 \leq \overline{\mathbb{E}}\left[\frac{\textrm{Card}(\mathsf{S}_{n-1}^l)}{N}\left(\frac{\overline{\mathbf{G}}_{n}^l}{\underline{\mathbf{G}}_{n}^l\underline{\mathbf{G}}_{n}^{l-1}}\right)\left|\frac{1}{N}\sum_{i=1}^N\{\mathbf{G}_{n}^l(U_{n}^{l,i})-\mathbf{G}_{n}^{l-1}(\bar{U}_{n}^{l-1,i})\}\right|\right].
$$
Then it easily follows
$$
T_2 \leq \overline{\mathbb{E}}\left[\frac{\textrm{Card}(\mathsf{S}_{n-1}^l)}{N}\left(\frac{\overline{\mathbf{G}}_{n}^l}{\underline{\mathbf{G}}_{n}^l\underline{\mathbf{G}}_{n}^{l-1}}\right)
\left(\left|\frac{1}{N}\sum_{i\in \mathsf{S}_{n-1}^l}\{\mathbf{G}_{n}^l(U_{n}^{l,i})-\mathbf{G}_{n}^{l-1}(\bar{U}_{n}^{l-1,i})\}\right| + \frac{\textrm{Card}((\mathsf{S}_{n-1}^l)^c)}{N}(\overline{\mathbf{G}}_{n}^l+\overline{\mathbf{G}}_{n}^{l-1})
\right)\right].
$$
Now we set
\begin{eqnarray*}
T_3 & := & \overline{\mathbb{E}}\left[\frac{\textrm{Card}(\mathsf{S}_{n-1}^l)}{N}\left(\frac{\overline{\mathbf{G}}_{n}^l}{\underline{\mathbf{G}}_{n}^l\underline{\mathbf{G}}_{n}^{l-1}}\right)
\left|\frac{1}{N}\sum_{i\in \mathsf{S}_{n-1}^l}\{\mathbf{G}_{n}^l(U_{n}^{l,i})-\mathbf{G}_{n}^{l-1}(\bar{U}_{n}^{l-1,i})\}\right|\right] \\
T_4 & := & \overline{\mathbb{E}}\left[\frac{\textrm{Card}(\mathsf{S}_{n-1}^l)}{N}\left(\frac{\overline{\mathbf{G}}_{n}^l}{\underline{\mathbf{G}}_{n}^l\underline{\mathbf{G}}_{n}^{l-1}}\right)
\frac{\textrm{Card}((\mathsf{S}_{n-1}^l)^c)}{N}(\overline{\mathbf{G}}_{n}^l+\overline{\mathbf{G}}_{n}^{l-1})\right].
\end{eqnarray*}
For $T_3$, applying Cauchy Schwarz and Jensen
$$
T_3 \leq \overline{\mathbb{E}}\left[\left(\frac{\textrm{Card}(\mathsf{S}_{n-1}^l)}{N}\left(\frac{\overline{\mathbf{G}}_{n}^l}{\underline{\mathbf{G}}_{n}^l\underline{\mathbf{G}}_{n}^{l-1}}\right)\right)^2\right]^{1/2}\overline{\mathbb{E}}\left[\frac{1}{N}\sum_{i\in \mathsf{S}_{n-1}^l}\{\mathbf{G}_{n}^l(U_{n}^{l,i})-\mathbf{G}_{n}^{l-1}(\bar{U}_{n}^{l-1,i})\}^2\right]^{1/2}.
$$
Again, noting that the left-most term on the R.H.S.~is upper-bounded by a constant that does not depend upon $l$ nor $N$ 
$$
T_3 \leq C \overline{\mathbb{E}}\left[\frac{1}{N}\sum_{i\in S_{n-1}^l}\{\mathbf{G}_{n}^l(U_{n}^{l,i})-\mathbf{G}_{n}^{l-1}(\bar{U}_{n}^{l-1,i})\}^2\right]^{1/2}.
$$
Then, applying the above arguments, we have
$$
T_3 \leq  C\Delta_l^{1/2}.
$$
For $T_4$, taking expectations w.r.t.~the data on the time interval $[n,n+1]$ yields:
$$
T_4 \leq C\overline{\mathbb{E}}\left[\frac{\textrm{Card}((\mathsf{S}_{n-1}^l)^c)}{N}\right].
$$
Thus, we have
\begin{equation}\label{eq:small_prob2}
\overline{\mathbb{E}}\left[\sum_{i\in\mathsf{S}_{n-1}^l}\left|\frac{\mathbf{G}_{n}^l(U_{n}^{l,i})}{\sum_{k=1}^{N}\mathbf{G}_{n}^l(U_{n}^{l,k})}-
\frac{\mathbf{G}_{n}^{l-1}(\bar{U}_{n}^{l-1,i})}{\sum_{k=1}^{N}\mathbf{G}_{n}^{l-1}(\bar{U}_{n}^{l-1,k})}\right|\right] \leq C\Big(\Delta_l^{1/2}+1-\overline{\mathbb{E}}\left[\frac{\textrm{Card}(\mathsf{S}_{n-1}^l)}{N}\right]\Big).
\end{equation}
Combining \eqref{eq:prob_bound}, \eqref{eq:small_prob1}, \eqref{eq:small_prob2}, one can conclude the result via induction.
\end{proof}

\subsubsection{Rate Proofs for the Coupled Particle Filter}\label{app:cpf22}

\begin{lem}\label{lem:l2_cpf_contr}
Assume (D\ref{hyp_diff:1}). Then for any $n\in\mathbb{N}_0$, there exists a $C<+\infty$ such that for any $(l,N,\varphi)\in\mathbb{N}\times \mathbb{N}\times\mathcal{B}_b(\mathbb{R}^{d_x})\cap\textrm{\emph{Lip}}_{\|\cdot\|_2}(\mathbb{R}^{d_x})$
\begin{eqnarray*}
\sum_{p=0}^nT_{p,n}^{l,1}(\varphi) & \leq &  C(\|\varphi\|+\|\varphi\|_{\textrm{\emph{Lip}}})^2\Delta_l^{1/2} \\
\sum_{p=0}^{n-1}T_{p,n}^{l,2}(\varphi) & \leq &  C(\|\varphi\|+\|\varphi\|_{\textrm{\emph{Lip}}})^2\Delta_l^{1/4}.
\end{eqnarray*}
\end{lem}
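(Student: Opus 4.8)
The plan is to reduce both sums to the two elementary quantities
$$
E_{p,n}(\varphi) := \overline{\mathbb{E}}\big[(\mathbf{D}_{p,n}^l(\mathbf{G}_n^l\pmb{\varphi}^l)(U_p^{l,1})- \mathbf{D}_{p,n}^{l-1}(\mathbf{G}_n^{l-1}\pmb{\varphi}^{l-1})(\bar{U}_p^{l-1,1}))^2\big],\qquad F_p := \overline{\mathbb{E}}\big[(\mathbf{G}_{p}^l(U_p^{l,1})-\mathbf{G}_{p}^{l-1}(\bar{U}_p^{l-1,1}))^2\big],
$$
for which I would prove $E_{p,n}(\varphi)\le C(\|\varphi\|+\|\varphi\|_{\textrm{Lip}})^2\Delta_l^{1/2}$ and $F_p\le C\Delta_l^{1/2}$. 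For $T_{p,n}^{l,1}$, observe that $\max\{(\underline{\mathbf{G}}_{n}^l)^{-2},(\underline{\mathbf{G}}_{n}^{l-1})^{-2}\}$ is a function of the observation increments on $[n,n+1]$, whereas the squared bracket is measurable with respect to the signal noise and the observation increments on $[0,n]$; since under $\overline{\mathbb{P}}$ the process $Y$ is a Brownian motion independent of $X$, these factors are independent, and the negative moments of $\underline{\mathbf{G}}_n^l$ are finite uniformly in $l$ (Section \ref{sec_app:note}), so $T_{p,n}^{l,1}(\varphi)\le C\,E_{p,n}(\varphi)$. By definition $T_{p,n}^{l,2}(\varphi)= E_{p,n}(\varphi)^{1/2}+\|\varphi\|^2F_p^{1/2}$. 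Summing the fixed number ($n+1$, resp.~$n$) of terms and absorbing $n$ into $C$ then gives the first bound at rate $\Delta_l^{1/2}$, while the square roots in $T^{l,2}$ convert $\Delta_l^{1/2}$ into $\Delta_l^{1/4}$, as claimed. Note that, because $n$ is fixed, no time-uniform forgetting of the semigroups is required.

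To bound $F_p$ I would split on whether the first particle lies in $\mathsf{S}_{p-1}^l$. On $\{1\in\mathsf{S}_{p-1}^l\}$, exchangeability turns the single-particle expectation into an average over $\mathsf{S}_{p-1}^l$; conditioning on $\widehat{\mathcal{G}}_{p-1}^l$, the resampled ancestors are common-index and hence $\mathcal{O}(\Delta_l^{1/2})$-close in any $\mathbb{L}_r$ by Corollary \ref{cor:pairwise_difference}, so the Lipschitz regularity of $\mathbf{G}$ (Lemma \ref{lem:main_nlf_lem}) yields an $\mathcal{O}(\Delta_l)$ contribution. On the complement I would condition on $\widehat{\mathcal{G}}_{p-1}^l$ and merely use that the conditional expected squared difference is bounded by a constant, using \eqref{eq:g_upper}--\eqref{eq:g_lower} and the finite moments of $\overline{\mathbf{G}}_p^l$ (the data on $[p,p+1]$ is independent of the conditioning). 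Since $\{1\notin\mathsf{S}_{p-1}^l\}$ is $\widehat{\mathcal{G}}_{p-1}^l$-measurable with $\overline{\mathbb{P}}(1\notin\mathsf{S}_{p-1}^l)=1-\overline{\mathbb{E}}[\textrm{Card}(\mathsf{S}_{p-1}^l)/N]=\mathcal{O}(\Delta_l^{1/2})$ by Lemma \ref{small_probability}, this contributes $\mathcal{O}(\Delta_l^{1/2})$ and dominates.

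For $E_{p,n}(\varphi)$ I would proceed with the same dichotomy. On $\{1\notin\mathsf{S}_{p-1}^l\}$, the two-sided bounds on $\mathbf{G}$ give $|\mathbf{D}_{p,n}^l(\mathbf{G}_n^l\pmb{\varphi}^l)(u)|\le C_{\mathrm{data}}\|\varphi\|$ uniformly in the starting path $u$, with $C_{\mathrm{data}}$ a data-dependent factor (over $[p,n+1]$) of finite moments independent of $\widehat{\mathcal{G}}_{p-1}^l$; conditioning then bounds the conditional expected squared difference by $C(\|\varphi\|+\|\varphi\|_{\textrm{Lip}})^2$, so this part is $\mathcal{O}((\|\varphi\|+\|\varphi\|_{\textrm{Lip}})^2\Delta_l^{1/2})$ by Lemma \ref{small_probability}. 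On $\{1\in\mathsf{S}_{p-1}^l\}$ the coupled starting paths are $\mathcal{O}(\Delta_l^{1/2})$-close (Lemma \ref{lem:pairwise_difference}/Corollary \ref{cor:pairwise_difference}), and I would decompose $\mathbf{D}_{p,n}^l(\mathbf{G}_n^l\pmb{\varphi}^l)(U_p^{l,1})- \mathbf{D}_{p,n}^{l-1}(\mathbf{G}_n^{l-1}\pmb{\varphi}^{l-1})(\bar{U}_p^{l-1,1})$ into (i) the sensitivity of the normalized, centered Feynman--Kac operator $\mathbf{D}_{p,n}^l$ to its initialization, controlled by the regularity estimates of Section \ref{app:nl_filt} together with the pairwise bound, and (ii) the level-to-level discretization bias $\mathbf{D}_{p,n}^l-\mathbf{D}_{p,n}^{l-1}$ of the two time-discretized Feynman--Kac semigroups at a common initialization; the denominators $\pi_p^l(\mathbf{Q}_{p,n}^l(1))$ are kept bounded with finite moments via \eqref{eq:g_upper}--\eqref{eq:g_lower}. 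This yields an $\mathcal{O}((\|\varphi\|+\|\varphi\|_{\textrm{Lip}})^2\Delta_l)$ contribution, again dominated by the complement.

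The main obstacle is precisely steps (i)--(ii) for $E_{p,n}$: establishing the spatial Lipschitz stability of the difference operators $\mathbf{D}_{p,n}^l$ and the level-to-level bias of the associated time-discretized Feynman--Kac semigroups, uniformly in $l$ and with the correct $(\|\varphi\|+\|\varphi\|_{\textrm{Lip}})$ dependence. This is exactly the role of the non-linear filtering results in Section \ref{app:nl_filt}; given them, the assembly above---the independence factorization of the $\max$ term, the $\mathsf{S}_{p-1}^l$/complement split, and the coupling and small-probability estimates---is routine bookkeeping, and the entire $\Delta_l^{1/2}$ rate is seen to originate from the small measure of the discrepant index set rather than from the coupling quality on $\mathsf{S}_{p-1}^l$.
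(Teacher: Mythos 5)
Your plan reproduces the paper's architecture almost exactly --- the split over the common-ancestor set $\mathsf{S}_{p-1}^l$, Corollary \ref{cor:pairwise_difference} for the coupled part, Lemma \ref{small_probability} for the complement, the semigroup regularity and level-bias estimates of Section \ref{app:nl_filt} for the initialization-sensitivity/discretization-bias decomposition, and the correct diagnosis that the dominant $\Delta_l^{1/2}$ comes from the discrepant indices rather than the coupling quality --- but your very first reduction contains a genuine error. You claim that $T_{p,n}^{l,1}(\varphi)\le C\,E_{p,n}(\varphi)$ because $\max\{(\underline{\mathbf{G}}_{n}^l)^{-2},(\underline{\mathbf{G}}_{n}^{l-1})^{-2}\}$ depends on the observation increments on $[n,n+1]$ while the squared bracket depends only on the signal noise and the data on $[0,n]$. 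This measurability claim is false: the test function inside $\mathbf{D}_{p,n}^l$ is $\mathbf{G}_n^l\pmb{\varphi}^l$, and $\mathbf{G}_n^l$ (as well as the centering constant $\pi_n^l(\mathbf{G}_n^l\pmb{\varphi}^l)$ hidden in $\mathbf{D}_{p,n}^l$) is a function of precisely the increments $Y_{n+(k+1)\Delta_l}-Y_{n+k\Delta_l}$ that enter \eqref{eq:g_lower}. The max factor appears in the definition of $T_{p,n}^{l,1}$ exactly because it is correlated with the bracket, so no independence factorization is available.

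Nor is the obvious patch at that global stage sufficient: replacing the factorization by Cauchy--Schwarz gives $\overline{\mathbb{E}}[\max\{\cdot\}^2]^{1/2}\,\overline{\mathbb{E}}[(\text{bracket})^4]^{1/2}$, and by your own analysis the fourth moment of the bracket is dominated by the event $\{1\notin\mathsf{S}_{p-1}^l\}$, of probability $\mathcal{O}(\Delta_l^{1/2})$ by Lemma \ref{small_probability}, on which the bracket is only $\mathcal{O}(1)$; hence $\overline{\mathbb{E}}[(\text{bracket})^4]^{1/2}=\mathcal{O}(\Delta_l^{1/4})$ and you would land short of the claimed $\Delta_l^{1/2}$. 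The repair --- and the route the paper actually takes --- is to perform your $\mathsf{S}_{p-1}^l$/complement split \emph{before} discarding the max factor. On the complement, the max combines with the data-dependent envelope of the bracket into a single factor built from the data on $[p,n+1]$, which is independent of the $\widehat{\mathcal{G}}_{p-1}^l$-measurable event, giving $C(\|\varphi\|+\|\varphi\|_{\textrm{Lip}})^2\Delta_l^{1/2}$ (the paper's term $T_7$). On $\mathsf{S}_{p-1}^l$, one applies H\"older against the finite negative/positive moments from \eqref{eq:g_upper}--\eqref{eq:g_lower} together with the $\mathbb{L}_r$ (all $r$) versions of Lemma \ref{lem:main_nlf_lem}, Corollary \ref{cor:pairwise_difference}, Corollary \ref{cor:nlf_cor1} and Remark \ref{rem:extra_nlf_res}, giving $\mathcal{O}(\Delta_l)$ (the paper's $T_6$ and $T_2$--$T_5$; the paper telescopes $\mathbf{D}_{p,n}^l-\mathbf{D}_{p,n}^{l-1}$ into five terms first and splits only the leading one, whereas you split first and then telescope --- that ordering difference is immaterial). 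With this correction the rest of your argument, including the $F_p$ bound and the conversion of $\Delta_l^{1/2}$ into $\Delta_l^{1/4}$ through the square roots in $T_{p,n}^{l,2}$, goes through as in the paper.
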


\begin{proof}
We consider $T_{p,n}^{l,1}(\varphi)$ only, as the case of $T_{p,n}^{l,2}(\varphi)$ is very similar. 

We have the upper-bound
$$
\overline{\mathbb{E}}[\max\{(\underline{\mathbf{G}}_{n}^l)^{-2},(\underline{\mathbf{G}}_{n}^{l-1})^{-2}\}(\mathbf{D}_{p,n}^l(\mathbf{G}_n^l\pmb{\varphi}^l)(U_p^{l,1})- \mathbf{D}_{p,n}^{l-1}(\mathbf{G}_n^{l-1}\pmb{\varphi}^{l-1})(\bar{U}_p^{l-1,1}))^2] \leq C\sum_{j=1}^5T_j
$$
where
\begin{eqnarray*}
T_1 & := & \overline{\mathbb{E}}\left[
\max\{(\underline{\mathbf{G}}_{n}^l)^{-2},(\underline{\mathbf{G}}_{n}^{l-1})^{-2}\}
\left(\frac{1}{\pi_p^l(\mathbf{Q}_{p,n}^l(1))}\{\mathbf{Q}_{p,n}^l(\mathbf{G}_n^l\pmb{\varphi}^l)(U_p^{l,1})- \mathbf{Q}_{p,n}^{l-1}(\mathbf{G}_n^{l-1}\pmb{\varphi}^{l-1})(\bar{U}_p^{l-1,1})\}\right)^2\right]\\
T_2 & := & \overline{\mathbb{E}}\left[
\max\{(\underline{\mathbf{G}}_{n}^l)^{-2},(\underline{\mathbf{G}}_{n}^{l-1})^{-2}\}
\left(\frac{\mathbf{Q}_{p,n}^{l-1}(\mathbf{G}_n^{l-1}\pmb{\varphi}^{l-1})(\bar{U}_p^{l-1,1})}{\pi_p^l(\mathbf{Q}_{p,n}^l(1))\pi_p^{l-1}(\mathbf{Q}_{p,n}^{l-1}(1))}
\{\pi_p^{l-1}(\mathbf{Q}_{p,n}^{l-1}(1))-\pi_p^l(\mathbf{Q}_{p,n}^l(1))\}\right)^2\right]\\
T_3 & := & \overline{\mathbb{E}}\left[
\max\{(\underline{\mathbf{G}}_{n}^l)^{-2},(\underline{\mathbf{G}}_{n}^{l-1})^{-2}\}
\left(\frac{\mathbf{Q}_{p,n}^{l-1}(1)(\bar{U}_p^{l-1,1})}{\pi_p^{l-1}(\mathbf{Q}_{p,n}^{l-1}(1))}
\{\pi_n^{l-1}(\mathbf{G}_n^{l-1}\pmb{\varphi}^{l-1})-\pi_n^l(\mathbf{G}_n^{l}\pmb{\varphi}^{l})\}\right)^2\right]\\
T_4 & := & \overline{\mathbb{E}}\left[
\max\{(\underline{\mathbf{G}}_{n}^l)^{-2},(\underline{\mathbf{G}}_{n}^{l-1})^{-2}\}
\left(\frac{\pi_n^l(\mathbf{G}_n^{l}\pmb{\varphi}^{l})}{\pi_p^{l-1}(\mathbf{Q}_{p,n}^{l-1}(1))}\{\mathbf{Q}_{p,n}^{l-1}(1)(\bar{U}_p^{l-1,1})-\mathbf{Q}_{p,n}^l(1)(U_p^{l,1})\}\right)^2\right]\\
T_5 & := & \overline{\mathbb{E}}\left[
\max\{(\underline{\mathbf{G}}_{n}^l)^{-2},(\underline{\mathbf{G}}_{n}^{l-1})^{-2}\}
\left(\frac{\mathbf{Q}_{p,n}^{l}(1)(U_p^{l,1})}{\pi_p^l(\mathbf{Q}_{p,n}^l(1))\pi_p^{l-1}(\mathbf{Q}_{p,n}^{l-1}(1))}
\{\pi_p^l(\mathbf{Q}_{p,n}^l(1))-\pi_p^{l-1}(\mathbf{Q}_{p,n}^{l-1}(1))\}\right)^2\right].
\end{eqnarray*}
We will give bounds on the terms $T_1$ and $T_2$ only. The proofs for appropriate bounds on $T_3, T_4, T_5$ are very similar and hence omitted.

For $T_1$, we have $T_1=T_6+T_7$, where
\begin{eqnarray*}
T_6 &:=& \overline{\mathbb{E}}\left[\frac{\max\{(\underline{\mathbf{G}}_{n}^l)^{-2},(\underline{\mathbf{G}}_{n}^{l-1})^{-2}\}}{\pi_p^l(\mathbf{Q}_{p,n}^l(1))^2}
\frac{1}{N}\sum_{i\in\mathsf{S}_{p-1}^l}\{\mathbf{Q}_{p,n}^l(\mathbf{G}_n^l\pmb{\varphi}^l)(U_p^{l,i})- \mathbf{Q}_{p,n}^{l-1}(\mathbf{G}_n^{l-1}\pmb{\varphi}^{l-1})(\bar{U}_p^{l-1,i})\}^2\right]\\
T_7 & := &\overline{\mathbb{E}}\left[\frac{\max\{(\underline{\mathbf{G}}_{n}^l)^{-2},(\underline{\mathbf{G}}_{n}^{l-1})^{-2}\}}{\pi_p^l(\mathbf{Q}_{p,n}^l(1))^2}
\frac{1}{N}\sum_{i\in(\mathsf{S}_{p-1}^l)^c}\{\mathbf{Q}_{p,n}^l(\mathbf{G}_n^l\pmb{\varphi}^l)(U_p^{l,i})- \mathbf{Q}_{p,n}^{l-1}(\mathbf{G}_n^{l-1}\pmb{\varphi}^{l-1})(\bar{U}_p^{l-1,i})\}^2\right].
\end{eqnarray*}

For $T_6$, applying Cauchy-Schwarz (twice) with Lemma \ref{lem:res1} and conditional Jensen yields
$$
T_6 \leq C \overline{\mathbb{E}}\left[\frac{1}{N}\sum_{i\in\mathsf{S}_{p-1}^l}\{\mathbf{Q}_{p,n}^l(\mathbf{G}_n^l\pmb{\varphi}^l)(U_p^{l,i})- \mathbf{Q}_{p,n}^{l-1}(\mathbf{G}_n^{l-1}\pmb{\varphi}^{l-1})(\bar{U}_p^{l-1,i})\}^4\right]^{1/2}.
$$
Then conditioning upon the entire data trajectory and the information up-to resampling at time $p-1$, followed by Lemma \ref{lem:main_nlf_lem} gives the upper-bound
$$
T_6 \leq C(\|\varphi\|+\|\varphi\|_{\textrm{Lip}})^2\left(\overline{\mathbb{E}}\left[\frac{1}{N}\sum_{i\in \mathsf{S}_{p-1}^l} \|\widehat{X}_{p-1,p}^{l,i}-\widehat{\bar{X}}_{p-1,p}^{l-1,i}\|_2^4\right]+\Delta_l^{2}\right)^{1/2}.
$$
Applying Corollary \ref{cor:pairwise_difference} gives
\begin{equation}\label{eq:mistake2}
T_6 \leq C(\|\varphi\|+\|\varphi\|_{\textrm{Lip}})^2\Delta_l.
\end{equation}
For $T_7$, noting \eqref{eq:g_upper} and \eqref{eq:g_lower}  and taking expectations w.r.t.~$\{Y_t\}$ on the time interval $[p,n+1]$ one has the upper-bound
$$
T_7 \leq C(\|\varphi\|+\|\varphi\|_{\textrm{Lip}})^2\left(1-\overline{\mathbb{E}}\left[\frac{\textrm{Card}(\mathsf{S}_{p-1}^l)}{N}\right]\right).
$$
Then applying Lemma \ref{small_probability} one has
$$
T_7 \leq C(\|\varphi\|+\|\varphi\|_{\textrm{Lip}})^2\Delta_l^{1/2}.
$$
Thus, using \eqref{eq:mistake2}
$$
T_1 \leq C(\|\varphi\|+\|\varphi\|_{\textrm{Lip}})^2\Delta_l^{1/2}.
$$

For $T_2$ applying Cauchy-Schwarz and Corollary \ref{cor:nlf_cor1}
$$
T_2 \leq C\overline{\mathbb{E}}\left[\left(\frac{\mathbf{Q}_{p,n}^{l-1}(\mathbf{G}_n^{l-1}\pmb{\varphi}^{l-1})(\bar{U}_p^{l-1,1})
\max\{(\underline{\mathbf{G}}_{n}^l)^{-2},(\underline{\mathbf{G}}_{n}^{l-1})^{-2}\}
}{\pi_p^l(\mathbf{Q}_{p,n}^l(1))\pi_p^{l-1}(\mathbf{Q}_{p,n}^{l-1}(1))}\right)^4\right]^{1/2}\Delta_l.
$$
Applying H\"older (thrice) and Lemma \ref{lem:res1} one has
$$
T_2 \leq C(\|\varphi\|+\|\varphi\|_{\textrm{Lip}})^2\Delta_l.
$$
Hence we have shown that
$$
\overline{\mathbb{E}}[(\mathbf{D}_{p,n}^l(\mathbf{G}_n^l\pmb{\varphi}^l)(U_p^{l,1})- \mathbf{D}_{p,n}^{l-1}(\mathbf{G}_n^{l-1}\pmb{\varphi}^{l-1})(\bar{U}_p^{l-1,1}))^2] \leq C(\|\varphi\|+\|\varphi\|_{\textrm{Lip}})^2\Delta_l^{1/2}.
$$
\end{proof}


\begin{prop}\label{lem:l2_cpf_l2}
Assume (D\ref{hyp_diff:1}). Then for any $n\in\mathbb{N}_0$, there exists a $C<+\infty$ such that for any $(l,N,\varphi)\in\mathbb{N}\times\mathbb{N}\times\mathcal{B}_b(\mathbb{R}^{d_x})\cap\textrm{\emph{Lip}}_{\|\cdot\|_2}(\mathbb{R}^{d_x})$
$$
\overline{\mathbb{E}}\left[\left([\eta_{n}^{l}-\eta_{n}^{l-1}]^{N}(\varphi)-[\eta_{n}^{l}-\eta_{n}^{l-1}](\varphi)\right)^2\right] \leq C(\|\varphi\|+\|\varphi\|_{\textrm{\emph{Lip}}})^2\left(\frac{\Delta_l^{1/2}}{N}+\frac{\Delta_l^{1/4}}{N^{3/2}}\right).
$$
\end{prop}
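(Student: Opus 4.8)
The plan is to derive the bound from the two preceding results by an algebraic manipulation of the difference of ratios, with the leftover terms shown to be of no larger order. The case $n=0$ is immediate since $\eta_0^l=\delta_{x_*}$ for every $l$, so both quantities vanish; assume $n\ge 1$. Abbreviate $\mu^N:=\check{\pi}_{n-1}^{l,N}$, $\mu:=\check{\pi}_{n-1}^l$, $g^l:=\mathbf{G}_{n-1}^l\otimes 1$, $g^{l-1}:=1\otimes\mathbf{G}_{n-1}^{l-1}$, $f^l:=(\mathbf{G}_{n-1}^l\pmb{\varphi}^l)\otimes 1$, $f^{l-1}:=1\otimes(\mathbf{G}_{n-1}^{l-1}\pmb{\varphi}^{l-1})$, and write $\alpha^l:=\mu(f^l)/\mu(g^l)=\eta_n^l(\varphi)$, $\alpha^{l-1}:=\mu(f^{l-1})/\mu(g^{l-1})=\eta_n^{l-1}(\varphi)$, using \eqref{eq:pred_up_cpf}. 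By \eqref{eq:cpf_est} the quantity to control is $\overline{\mathbb{E}}[(R^N-R)^2]$ where $R^N-R=\{\mu^N(f^l)/\mu^N(g^l)-\mu^N(f^{l-1})/\mu^N(g^{l-1})\}-(\alpha^l-\alpha^{l-1})$.

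Applying the single-ratio identity $\mu^N(f)/\mu^N(g)-\mu(f)/\mu(g)=\mu^N(g)^{-1}(\mu^N-\mu)(f-[\mu(f)/\mu(g)]\,g)$ to each level and collecting over the common denominator $\mu^N(g^l)$ gives
\begin{align*}
R^N-R &=\frac{(\mu^N-\mu)(f^l-f^{l-1})}{\mu^N(g^l)}-\alpha^l\,\frac{(\mu^N-\mu)(g^l-g^{l-1})}{\mu^N(g^l)} \\
&\quad-(\alpha^l-\alpha^{l-1})\,\frac{(\mu^N-\mu)(g^{l-1})}{\mu^N(g^l)}+(\mu^N-\mu)(f^{l-1}-\alpha^{l-1}g^{l-1})\,\frac{\mu^N(g^{l-1}-g^l)}{\mu^N(g^l)\,\mu^N(g^{l-1})}.
\end{align*}
The decisive point is that $f^l-f^{l-1}=(\mathbf{G}_{n-1}^l\pmb{\varphi}^l)\otimes 1-1\otimes(\mathbf{G}_{n-1}^{l-1}\pmb{\varphi}^{l-1})$ and $g^l-g^{l-1}=\mathbf{G}_{n-1}^l\otimes 1-1\otimes\mathbf{G}_{n-1}^{l-1}$ are precisely the test functions controlled by Lemma \ref{lem:l2_cpf} (the latter corresponding to $\varphi\equiv 1$). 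Hence for the first two terms I would bound $\mu^N(g^l)^{-2}\le\max\{\mu^N(g^l)^{-2},\mu^N(g^{l-1})^{-2}\}$, use $|\alpha^l|\le\|\varphi\|$, take expectations, and apply Lemma \ref{lem:l2_cpf} followed by Lemma \ref{lem:l2_cpf_contr} (with $n$ there replaced by $n-1$). This produces exactly $C(\|\varphi\|+\|\varphi\|_{\textrm{Lip}})^2(\Delta_l^{1/2}/N+\Delta_l^{1/4}/N^{3/2})$, the claimed bound.

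It then remains to show the last two terms are of order at most $\Delta_l^{1/2}/N$. For the third term I would use Cauchy-Schwarz: $(\mu^N-\mu)(g^{l-1})/\mu^N(g^l)$ is a bounded-test-function Monte Carlo fluctuation of order $N^{-1/2}$ in every $\mathbb{L}_r$ (Proposition \ref{prop:lp_pf} together with the negative-moment control of Lemma \ref{lem:res1}), while $\alpha^l-\alpha^{l-1}=[\eta_n^l-\eta_n^{l-1}](\varphi)$ is $O(\Delta_l^{1/2})$ in $\mathbb{L}_r$ over the data by the non-linear filtering estimates of Section \ref{app:nl_filt} (Corollary \ref{cor:nlf_cor1}); the product squared is $O(\Delta_l/N)$. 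For the cross term the essential step is to separate the deterministic (in the particles) mean from the fluctuation by writing $\mu^N(g^{l-1}-g^l)=(\mu^N-\mu)(g^{l-1}-g^l)+\{\pi_{n-1}^{l-1}(\mathbf{G}_{n-1}^{l-1})-\pi_{n-1}^l(\mathbf{G}_{n-1}^l)\}$: the mean is $O(\Delta_l^{1/2})$ in $\mathbb{L}_r$ over the data (again Corollary \ref{cor:nlf_cor1}) and the centred part inherits the coupling gain exactly as in the treatment of $T_3$ and $T_5$ inside the proof of Lemma \ref{lem:l2_cpf}, through Corollary \ref{cor:pairwise_difference}, Lemma \ref{small_probability} and the Lipschitz control of the potentials (Lemma \ref{lem:main_nlf_lem}); paired with the $N^{-1/2}$ fluctuation $(\mu^N-\mu)(f^{l-1}-\alpha^{l-1}g^{l-1})$ this again gives order at most $\Delta_l^{1/2}/N$. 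Collecting the four contributions through the $C_2$-inequality yields the stated estimate.

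The step I expect to be the main obstacle is the last (cross) term, which is the price of the mismatch between the level-$l$ and level-$(l-1)$ normalising constants $\mu^N(g^l)$ and $\mu^N(g^{l-1})$: one cannot express $R^N-R$ as a single signed-measure functional over one denominator, and a crude bound on $\mu^N(g^{l-1})-\mu^N(g^l)$ would only give the non-multilevel rate $O(1/N)$. What saves the estimate is that this difference is genuinely small, of order $\Delta_l^{1/2}$ in mean, and this is not a generic particle-filter fact but relies on the coupling ability of the CPF, i.e.\ that a fraction $1-O(\Delta_l^{1/2})$ of particle pairs share a common ancestor and stay $O(\Delta_l^{1/2})$-close, as quantified by Lemmas \ref{lem:pairwise_difference} and \ref{small_probability}.
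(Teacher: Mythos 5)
Your proposal is correct and takes essentially the same route as the paper: the four-term ratio decomposition you derive by hand is precisely the kind of identity the paper imports as \cite[Lemma C.5]{mlpf}, after which both arguments bound the leading terms via Lemma \ref{lem:l2_cpf} and Lemma \ref{lem:l2_cpf_contr} (applied at time $n-1$, including with $\varphi\equiv 1$ for the potential difference) and absorb the cross/remainder terms coming from the mismatch of the normalising constants using the level-difference filtering estimates of Corollary \ref{cor:nlf_cor1} and Remark \ref{rem:extra_nlf_res}. The only difference is expository: the paper's proof is a one-line citation, whereas you reconstruct the decomposition and the $O(\Delta_l^{1/2})$ control of $\mu^N(g^l)-\mu^N(g^{l-1})$ explicitly.
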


\begin{proof}
The result follows, essentially, by using \cite[Lemma C.5]{mlpf} along with Lemmata \ref{lem:l2_cpf}, \ref{lem:l2_cpf_contr}, along with Corollary \ref{cor:nlf_cor1} (see also Remark \ref{rem:extra_nlf_res}).
\end{proof}

%

\subsection{Results for the Non-Linear Filter}\label{app:nl_filt}

In this section, we consider the case of the non-linear filter, with a probability space $(\Omega,\mathcal{F})$, with $\mathcal{F}_t$ the filtration, that includes $\{Y_t\}_{t\geq 0}$ as standard Brownian motion independent of a diffusion process $\{X_t^{x}\}_{t\geq 0}$ which obeys \eqref{eq:state}
with initial condition $x\in\mathbb{R}^{d_x}$ and associated Euler discretization (with the same Brownian increments) at level $l$ $(\widetilde{X}_{\Delta_l}^x,\widetilde{X}_{2\Delta_l}^{x},\dots)$. We will also consider another diffusion process
$\{X_t^{x_{*}}\}_{t\geq 0}$ which obeys \eqref{eq:state}, initial condition ${x_{*}}\in\mathbb{R}^{d_x}$ and the same Brownian motion as $\{X_t^x\}_{t\geq 0}$ and associated Euler discretization (with the same Brownian increments) at level $l$ $(\widetilde{X}_{\Delta_l}^{x_{*}},\widetilde{X}_{2\Delta_l}^{x_{*}},\dots)$. 
Expectations are written $\overline{\mathbb{E}}$. 
We set for $(p,n)\in\mathbb{N}_0^2$, $n+1>p$
$$
Z_{p,n+1}^x = \exp\Big\{\int_{p}^{n+1}h(X_s^x)^*dY_s -\frac{1}{2}\int_{p}^{n+1}h(X_s^x)^*h(X_s^x)ds\Big\}
$$
with the convention that $Z_{0,n+1}^x=Z_{n+1}^x$.
The technical results in this appendix are critical in proving the results in Appendix \ref{app:cpf}. Although some of the results are more-or-less known in the literature (e.g.~\cite{picard}), we give the proofs for the completeness of the article.


\begin{lem}\label{lem:nlf_lem1}
Assume (D\ref{hyp_diff:1}). Then for any $(n,r)\in\mathbb{N}_0\times\mathbb{N}$, there exists a $C<+\infty$ such that for any $(l,\varphi,x)\in\mathbb{N}_0\times\mathcal{B}_b(\mathbb{R}^{d_x})\cap\textrm{\emph{Lip}}_{\|\cdot\|_2}(\mathbb{R}^{d_x})\times\mathbb{R}^{d_x}$
$$
\overline{\mathbb{E}}\Big[\Big|\varphi(\widetilde{X}_{n+1}^x)Z_{n+1}^l(\widetilde{X}_0^x,\widetilde{X}_{\Delta_l}^x,\dots,\widetilde{X}_{n+1-\Delta_l}^x)-
\varphi(X_{n+1}^x)Z_{n+1}^x\Big|^r\Big]^{1/r} \leq C(\|\varphi\|+\|\varphi\|_{\textrm{\emph{Lip}}})\Delta_l^{1/2}.
$$
\end{lem}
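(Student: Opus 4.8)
The plan is to bound the difference by separating the contributions of the test function $\varphi$ and of the Radon--Nikodym weight $Z$. Writing $Z_{n+1}^{l,x} := Z_{n+1}^l(\widetilde{X}_0^x,\dots,\widetilde{X}_{n+1-\Delta_l}^x)$ for brevity, I would decompose
$$
\varphi(\widetilde{X}_{n+1}^x)Z_{n+1}^{l,x}-\varphi(X_{n+1}^x)Z_{n+1}^x
= \big(\varphi(\widetilde{X}_{n+1}^x)-\varphi(X_{n+1}^x)\big)Z_{n+1}^{l,x}
+ \varphi(X_{n+1}^x)\big(Z_{n+1}^{l,x}-Z_{n+1}^x\big),
$$
and treat the two terms on the right-hand side separately via the triangle inequality in $\mathbb{L}_r$. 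Throughout I will use the uniform (in $l$) moment bounds $\sup_l\overline{\mathbb{E}}[(Z_{n+1}^{l,x})^q + (Z_{n+1}^x)^q]<\infty$ for every $q\ge 1$: under $\overline{\mathbb{P}}$ the process $\{Y_t\}$ is a Brownian motion independent of the signal, so conditional on the (Euler or exact) signal path both weights are lognormal with variance parameter bounded by $\|h\|^2(n+1)$, finite since $h$ is bounded (this is essentially the content of the $\overline{\mathbf{G}}_n^l,\underline{\mathbf{G}}_n^l$ bounds in Section \ref{sec_app:note}).

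For the first term, the Lipschitz property of $\varphi$ gives $|\varphi(\widetilde{X}_{n+1}^x)-\varphi(X_{n+1}^x)|\le \|\varphi\|_{\textrm{Lip}}\|\widetilde{X}_{n+1}^x-X_{n+1}^x\|_2$; after applying H\"older's inequality to split off $Z_{n+1}^{l,x}$ (whose high moments are bounded), it remains to invoke the standard strong error estimate for the Euler--Maruyama scheme, $\overline{\mathbb{E}}[\|\widetilde{X}_{n+1}^x-X_{n+1}^x\|_2^q]^{1/q}\le C\Delta_l^{1/2}$, which holds under (D\ref{hyp_diff:1}) since $b,\sigma$ are bounded and Lipschitz. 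This yields the bound $C\|\varphi\|_{\textrm{Lip}}\Delta_l^{1/2}$.

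For the second term, using $|\varphi|\le\|\varphi\|$ reduces the matter to showing $\|Z_{n+1}^{l,x}-Z_{n+1}^x\|_r\le C\Delta_l^{1/2}$. Writing $Z_{n+1}^x=e^{\zeta^x}$ and $Z_{n+1}^{l,x}=e^{\zeta^l}$ with the obvious exponents, the elementary inequality $|e^a-e^b|\le|a-b|(e^a+e^b)$ together with H\"older reduces this to $\|\zeta^x-\zeta^l\|_{2r}\le C\Delta_l^{1/2}$, as $\|Z_{n+1}^x+Z_{n+1}^{l,x}\|_{2r}$ is bounded uniformly in $l$. After rewriting the discrete sums as integrals of the grid-frozen integrand, the exponent difference splits as
$$
\zeta^x-\zeta^l = \int_0^{n+1}\big(h(X_s^x)-h(\widetilde{X}_{\Delta_l\lfloor s\Delta_l^{-1}\rfloor}^x)\big)^*dY_s - \tfrac{1}{2}\int_0^{n+1}\big(|h(X_s^x)|^2-|h(\widetilde{X}_{\Delta_l\lfloor s\Delta_l^{-1}\rfloor}^x)|^2\big)ds.
$$

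Both pieces are controlled by the single pointwise estimate $\overline{\mathbb{E}}[\|X_s^x-\widetilde{X}_{\Delta_l\lfloor s\Delta_l^{-1}\rfloor}^x\|_2^q]^{1/q}\le C\Delta_l^{1/2}$, obtained by combining the strong Euler error with the $\tfrac{1}{2}$-H\"older modulus of continuity of the diffusion over one time step. For the stochastic-integral piece I would apply the Burkholder--Davis--Gundy inequality --- legitimate because the integrand is $\mathcal{F}_s$-adapted and $Y$ is an $\overline{\mathbb{P}}$-Brownian motion --- to bound its $\mathbb{L}_{2r}$-norm by $C\big\|\int_0^{n+1}|h(X_s^x)-h(\widetilde{X}_{\Delta_l\lfloor s\Delta_l^{-1}\rfloor}^x)|^2 ds\big\|_r^{1/2}$, and then Minkowski's integral inequality with the Lipschitz bound on $h$ gives $C\Delta_l^{1/2}$. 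The drift piece is handled directly: $\big||h(a)|^2-|h(b)|^2\big|\le 2\|h\|\,\|h\|_{\textrm{Lip}}\|a-b\|_2$ (using boundedness and Lipschitzness of $h$), and Minkowski's integral inequality again reduces it to the same pointwise estimate. The main obstacle is the stochastic-integral term: one must verify that the grid-frozen integrand is genuinely adapted so that BDG applies, and that the strong-error and continuity estimates are uniform over the unit interval; everything else is routine once the uniform moment bounds on the weights are in place.
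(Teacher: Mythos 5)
Your proof is correct and achieves the stated rate, but it takes a genuinely different decomposition from the paper's. The paper inserts the intermediate quantity $\varphi(X_{n+1}^x)Z_{n+1}^l(X_0^x,X_{\Delta_l}^x,\dots,X_{n+1-\Delta_l}^x)$ --- the \emph{discretized} weight evaluated on the \emph{exact} diffusion at grid times --- so that the error splits into a term $T_1$ comparing the Euler path with the exact path inside the discrete functional, and a term $T_2$ measuring the pure time-discretization error of the weight along the exact path; $T_1$ is handled by the mean value theorem applied to $\log Z_{n+1}^l$ followed by a discrete-time martingale-plus-remainder argument with the Burkholder--Gundy--Davis inequality, while $T_2$ is dispatched by pointing to the argument of \cite[Theorem 21.3]{crisan} without details. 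You skip the intermediate term entirely: after the same initial separation of the $\varphi$-difference from the weight-difference, you compare $Z_{n+1}^{l}$ on the Euler path directly with the continuous weight by writing both exponents as integrals (grid-frozen for the discrete one) against $dY_s$ and $ds$, so that the single pointwise estimate $\overline{\mathbb{E}}[\|X_s^x-\widetilde{X}_{\Delta_l\lfloor s\Delta_l^{-1}\rfloor}^x\|_2^q]^{1/q}\leq C\Delta_l^{1/2}$ --- Euler strong error at grid points plus the $1/2$-H\"older modulus of the diffusion in $\mathbb{L}_q$ --- controls both the stochastic-integral piece (continuous-time BDG plus Minkowski's integral inequality) and the bounded-variation piece. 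Your elementary inequality $|e^a-e^b|\leq|a-b|(e^a+e^b)$ plays exactly the role of the paper's MVT interpolation factor $\int_0^1 e^{sa+(1-s)b}\,ds$, and both are licensed by the same lognormal moment bounds obtained by integrating out $\{Y_t\}$ using the boundedness of $h$. What your route buys is self-containedness (the citation to Crisan's theorem, whose adaptation the paper leaves implicit for $T_2$, is replaced by an explicit unified estimate) and a shorter argument; what it costs is the obligation to check the points you rightly flagged, both of which are unproblematic here: the grid-frozen integrand is adapted to the enlarged filtration $\sigma(Y_u,W_u:u\leq s)$, under which $Y$ remains a Brownian motion by independence of the signal and observation noise under $\overline{\mathbb{P}}$, and all constants are uniform in $x$ and $s\in[0,n+1]$ because $b$, $\sigma$ and $h$ are bounded and Lipschitz.
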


\begin{proof}
We have
\begin{equation}\label{eq:nlf_main}
\overline{\mathbb{E}}\Big[\Big|\varphi(\widetilde{X}_{n+1}^x)Z_{n+1}^l(\widetilde{X}_0^x,\widetilde{X}_{\Delta_l}^x,\dots,\widetilde{X}_{n+1-\Delta_l}^x)-
\varphi(X_{n+1}^x)Z_{n+1}^x\Big|^r\Big] \leq C(T_1+T_2)
\end{equation}
where
\begin{eqnarray*}
T_1 & := & \overline{\mathbb{E}}\Big[\Big|\varphi(\widetilde{X}_{n+1}^x)Z_{n+1}^l(\widetilde{X}_0^x,\widetilde{X}_{\Delta_l}^x,\dots,\widetilde{X}_{n+1-\Delta_l}^x)-
\varphi(X_{n+1}^x)Z_{n+1}^l(X_0^x,X_{\Delta_l}^x,\dots,X_{n+1-\Delta_l}^x)\Big|^r\Big] \\
T_2 & := & \overline{\mathbb{E}}\Big[\Big|\varphi(X_{n+1}^x)Z_{n+1}^l(X_0^x,X_{\Delta_l}^x,\dots,X_{n+1-\Delta_l}^x)-
\varphi(X_{n+1}^x)Z_{n+1}^x\Big|^r\Big].
\end{eqnarray*}
The term $T_2$ can be treated with a very similar proof to $T_1$ along the lines of \cite[Theorem 21.3]{crisan}, so we will give a proof for $T_1$ only.

One has
\begin{equation}\label{eq:nlf_main_1}
T_1 \leq C(T_3 + T_4)
\end{equation}
where
\begin{eqnarray*}
T_3 & := & \overline{\mathbb{E}}\Big[\Big|[\varphi(\widetilde{X}_{n+1}^x)-\varphi(X_{n+1}^x)]Z_{n+1}^l(\widetilde{X}_0^x,\widetilde{X}_{\Delta_l}^x,\dots,\widetilde{X}_{n+1-\Delta_l}^x)\Big|^r\Big] \\
T_4 & := & \overline{\mathbb{E}}\Big[\Big|\varphi(X_{n+1}^x)\Big(Z_{n+1}^l(\widetilde{X}_0^x,\widetilde{X}_{\Delta_l}^x,\dots,\widetilde{X}_{n+1-\Delta_l}^x)-Z_{n+1}^l(X_0^x,X_{\Delta_l}^x,\dots,X_{n+1-\Delta_l}^x)\Big)\Big|^r\Big].
\end{eqnarray*}
We now need to appropriately upper-bound $T_3$ and $T_4$.
For $T_3$, taking expectations of $Z_{n+1}^l(\widetilde{X}_0^x,\widetilde{X}_{\Delta_l}^x,\dots,\widetilde{X}_{n+1-\Delta_l}^x)$ w.r.t.~the process $\{Y_t\}$ and using the fact that $h$ is bounded along with the
fact that $\varphi\in\textrm{Lip}_{\|\cdot\|_2}(\mathbb{R}^{d_x})$ gives the upper-bound
$$
T_3 \leq C(\|\varphi\|+\|\varphi\|_{\textrm{Lip}})^r \overline{\mathbb{E}}[\|\widetilde{X}_{n+1}^x-X_{n+1}^x\|_2^r].
$$
Then using standard results on Euler discretization of diffusion processes (e.g.~\cite{kloeden})
\begin{equation}\label{eq:nlf_t3}
T_3 \leq C(\|\varphi\|+\|\varphi\|_{\textrm{Lip}})^r\Delta_l^{r/2}.
\end{equation}

For $T_4$ as $\varphi\in\mathcal{B}_b(\mathbb{R}^{d_x})$, one has
\begin{equation}\label{eq:nlf_t4}
T_4 \leq (\|\varphi\|+\|\varphi\|_{\textrm{Lip}})^r\overline{\mathbb{E}}\Big[\Big|Z_{n+1}^l(\widetilde{X}_0^x,\widetilde{X}_{\Delta_l}^x,\dots,\widetilde{X}_{n+1-\Delta_l}^x)-Z_{n+1}^l(X_0^x,X_{\Delta_l}^x,\dots,X_{n+1-\Delta_l}^x)\Big|^r\Big].
\end{equation}
Now, by the Mean Value Theorem (MVT)
$$
Z_{n+1}^l(\widetilde{X}_0^x,\widetilde{X}_{\Delta_l}^x,\dots,\widetilde{X}_{n+1-\Delta_l}^x)-Z_{n+1}^l(X_0^x,X_{\Delta_l}^x,\dots,X_{n+1-\Delta_l}^x)
 = 
\Big(
H_{n+1}^l(\widetilde{X}_0^x,\widetilde{X}_{\Delta_l}^x,\dots,\widetilde{X}_{n+1-\Delta_l}^x)-
$$
\begin{equation}
H_{n+1}^l(X_0^x,X_{\Delta_l}^x,\dots,X_{n+1-\Delta_l}^x)
\Big)
\int_{0}^1 \widetilde{H}_{n+1}^{l,s}(\widetilde{X}_0^x,\widetilde{X}_{\Delta_l}^x,\dots,\widetilde{X}_{n+1-\Delta_l}^x,X_0^x,X_{\Delta_l}^x,\dots,X_{n+1-\Delta_l}^x) ds
\label{eq:z_diff_mvt}
\end{equation}
where for any $(l,n,(x_0,x_{\Delta_l},\dots,x_{n+1-\Delta_l}),(x_0',x_{\Delta_l}',\dots,x_{n+1-\Delta_l}'),s)\in\{0,1,\dots\}^2\times(\mathbb{R}^{d_x})^{2(n+1)\Delta_l^{-1}}\times[0,1]$
\begin{eqnarray*}
H_{n+1}^l(x_0,x_{\Delta_l},\dots,x_{n+1-\Delta_l}) & = & \log[Z_{n+1}^l(x_0,x_{\Delta_l},\dots,x_{n+1-\Delta_l})] \\
\widetilde{H}_{n+1}^{l,s}(x_0,x_{\Delta_l},\dots,x_{n+1-\Delta_l},x_0',x_{\Delta_l}',\dots,x_{n+1-\Delta_l}')) & = & \exp\{sH_{n+1}^l(x_0,x_{\Delta_l},\dots,x_{n+1-\Delta_l})+\\ & & (1-s)H_{n+1}^l(x_0',x_{\Delta_l}',\dots,x_{n+1-\Delta_l}')\}.
\end{eqnarray*}
Then, by using \eqref{eq:z_diff_mvt} in \eqref{eq:nlf_t4} and applying Cauchy-Schwarz
$$
T_4 \leq (\|\varphi\|+\|\varphi\|_{\textrm{Lip}})^r \overline{\mathbb{E}}\Big[\Big|\int_{0}^1 \widetilde{H}_{n+1}^{l,s}(\widetilde{X}_0^x,\widetilde{X}_{\Delta_l}^x,\dots,\widetilde{X}_{n+1-\Delta_l}^x,X_0^x,X_{\Delta_l}^x,\dots,X_{n+1-\Delta_l}^x) ds\Big|^{2r}\Big]^{1/2}\times
$$
\begin{equation}
\overline{\mathbb{E}}\Big[\Big|H_{n+1}^l(\widetilde{X}_0^x,\widetilde{X}_{\Delta_l}^x,\dots,\widetilde{X}_{n+1-\Delta_l}^x)-H_{n+1}^l(X_0^x,X_{\Delta_l}^x,\dots,X_{n+1-\Delta_l}^x)\Big|^{2r}\Big]^{1/2}.
\label{eq:nlf_t4_1}
\end{equation}
Now, taking expectations w.r.t.~the process $\{Y_t\}$ and using the fact that $h$ is bounded, there exists a $C<+\infty$ such that
$$
\sup_{l\geq 0}\sup_{s\in[0,1]}
\overline{\mathbb{E}}\Big[\Big|\widetilde{H}_{n+1}^{l,s}(\widetilde{X}_0^x,\widetilde{X}_{\Delta_l}^x,\dots,\widetilde{X}_{n+1-\Delta_l}^x,X_0^x,X_{\Delta_l}^x,\dots,X_{n+1-\Delta_l}^x)\Big|^{2r}\Big] \leq C
$$
so, via Jensen, we need only deal with the right-most expectation on the R.H.S~of \eqref{eq:nlf_t4_1}, call it $T_5$. Now
$$
H_{n+1}^l(\widetilde{X}_0^x,\widetilde{X}_{\Delta_l}^x,\dots,\widetilde{X}_{n+1-\Delta_l}^x)-H_{n+1}^l(X_0^x,X_{\Delta_l}^x,\dots,X_{n+1-\Delta_l}^x)= M^l_{n+1} - R^l_{n+1}
$$
where
\begin{eqnarray*}
M^l_{n+1}& := & \sum_{k=0}^{\Delta_l^{-1}(n+1)-1}\Big[\{h(\widetilde{X}_{k\Delta_l})^*-h(X_{k\Delta_l})^*\}(Y_{(k+1)\Delta_l}-Y_{k\Delta_l})\Big] \\
R^l_{n+1} &:= & \frac{\Delta_l}{2}\sum_{k=0}^{\Delta_l^{-1}(n+1)-1}\Big[h(\widetilde{X}_{k\Delta_l})^*h(\widetilde{X}_{k\Delta_l})-h(X_{k\Delta_l})^*h(X_{k\Delta_l}))\Big]
\end{eqnarray*}
and we set $M_0=R_0=0$.
Thus, applying the $C_{2r}-$inequality, one has 
\begin{equation}\label{eq:pref_ineq1}
T_5^2 \leq C\Big(\overline{\mathbb{E}}[|M^l_{n+1}|^{2r}] + \overline{\mathbb{E}}[|R^l_{n+1}|^{2r}]\Big).
\end{equation}
We first focus on the first term on the R.H.S.~of \eqref{eq:pref_ineq1}. Applying $C_{2r}-$inequality $d_y-$times, we have
\begin{equation}\label{eq:pref_ineq2}
\overline{\mathbb{E}}[|M^l_{n+1}|^{2r}] \leq C\sum_{i=1}^{d_y}\overline{\mathbb{E}}\Big[\Big|
\sum_{k=0}^{\Delta_l^{-1}(n+1)-1}\Big[\{h^{(i)}(\widetilde{X}_{k\Delta_l})-h^{(i)}(X_{k\Delta_l})\}(Y_{(k+1)\Delta_l}^{(i)}-Y_{k\Delta_l}^{(i)})\Big]\Big|^{2r}\Big].
\end{equation}
We consider just the $i^{\textrm{th}}$ summand on the R.H.S., as the argument to be used is essentially exchangeable w.r.t.~$i$. As $\{M^l_{n},\mathcal{F}_{n\Delta_{l}^{-1}}\}_{n\in\{0,1,\dots\}}$ is a Martingale, applying the Burkholder-Gundy-Davis (BGD) inequality, Minkowski inequality, along with $h^{(i)}\in\textrm{Lip}_{\|\cdot\|_2}(\mathbb{R}^{d_x})$:
$$
\overline{\mathbb{E}}\Big[\Big|
\sum_{k=0}^{\Delta_l^{-1}(n+1)-1}\Big[\{h^{(i)}(\widetilde{X}_{k\Delta_l})-h^{(i)}(X_{k\Delta_l})\}(Y_{(k+1)\Delta_l}^{(i)}-Y_{k\Delta_l}^{(i)})\Big]\Big|^{2r}\Big]
\leq C\Delta_l^{r}\Big(
\sum_{k=0}^{\Delta_l^{-1}(n+1)-1}\overline{\mathbb{E}}[\|\widetilde{X}_{k\Delta_l}-X_{k\Delta_l}\|_2^{2r}]^{1/r}
\Big)^{r}.
$$
Then using standard results on Euler discretization of diffusion processes:
$$
\overline{\mathbb{E}}\Big[\Big|
\sum_{k=0}^{\Delta_l^{-1}(n+1)-1}\Big[\{h^{(i)}(\widetilde{X}_{k\Delta_l})-h^{(i)}(X_{k\Delta_l})\}(Y_{(k+1)\Delta_l}^{(i)}-Y_{k\Delta_l}^{(i)})\Big]\Big|^{2r}\Big]
\leq C\Delta_l^{r}.
$$
Thus, on returning to \eqref{eq:pref_ineq2}, we have shown that
\begin{equation}\label{eq:pref_ineq3}
\overline{\mathbb{E}}[|M^l_{n+1}|^{2r}] \leq C\Delta_l^{r}.
\end{equation}
Noting that as $h^{(i)}\in\textrm{Lip}_{\|\cdot\|_2}(\mathbb{R}^{d_x})$ and $h^{(i)}\in\mathcal{B}_b(\mathbb{R}^{d_x})$, it follows that $(h^{(i)})^2\in\textrm{Lip}_{\|\cdot\|_2}(\mathbb{R}^{d_x})$. So using very similar
calculations to those for $M^l_{n+1}$ (except not requiring to apply the BGD inequality), one can prove that
\begin{equation}\label{eq:pref_ineq4}
\overline{\mathbb{E}}[|R^l_{n+1}|^{2r}] \leq C\Delta_l^{r}.
\end{equation}
Thus combining \eqref{eq:pref_ineq3}-\eqref{eq:pref_ineq4} with \eqref{eq:pref_ineq1}, one has that
$T_5 \leq C\Delta_l^{r/2}$ and hence that
\begin{equation}\label{eq:nlf_t4_final}
T_4 \leq C(\|\varphi\|+\|\varphi\|_{\textrm{Lip}})^r\Delta_l^{r/2}.
\end{equation}
Noting \eqref{eq:nlf_main_1} and using the bounds \eqref{eq:nlf_t3} and \eqref{eq:nlf_t4_final}
$$
T_1 \leq C(\|\varphi\|+\|\varphi\|_{\textrm{Lip}})^r\Delta_l^{r/2}.
$$
As noted above, a similar bound can be obtained for $T_2$ and noting \eqref{eq:nlf_main}, the proof is hence concluded.
\end{proof}

\begin{lem}\label{lem:nlf_lem2}
Assume (D\ref{hyp_diff:1}). Then for any $(p,n,r)\in\mathbb{N}_0^2\times\mathbb{N}$, $n>p$ there exists a $C<+\infty$ such that for any $(x,x_*)\in\mathbb{R}^{d_x}\times\mathbb{R}^{d_x}$
$$
\overline{\mathbb{E}}\Big[\Big|Z_{p,n}^x-Z_{p,n}^{x_*}\Big|^r\Big]^{1/r} \leq C\|x-x_{*}\|_2.
$$
\end{lem}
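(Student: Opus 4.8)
The plan is to mimic the mean value theorem device used for the term $T_4$ in the proof of Lemma \ref{lem:nlf_lem1}, but now tracking dependence on the initial condition rather than on the discretization level. Write $Z_{p,n}^x = \exp(H_{p,n}^x)$ with
$$
H_{p,n}^x := \int_p^n h(X_s^x)^* dY_s - \tfrac12 \int_p^n h(X_s^x)^* h(X_s^x)\, ds,
$$
and analogously for $x_*$. By the MVT applied to the exponential,
$$
Z_{p,n}^x - Z_{p,n}^{x_*} = \big(H_{p,n}^x - H_{p,n}^{x_*}\big)\int_0^1 \exp\!\big\{s H_{p,n}^x + (1-s) H_{p,n}^{x_*}\big\}\, ds.
$$
Applying Cauchy--Schwarz under $\overline{\mathbb{E}}$ then reduces the problem to bounding (i) the $2r$-th moment of the integral factor, uniformly in $(x,x_*)$, and (ii) the $2r$-th moment of the exponent difference $H_{p,n}^x - H_{p,n}^{x_*}$ by $C\|x-x_*\|_2^{2r}$.

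For (i), convexity of $\exp$ gives $\int_0^1 \exp\{s H_{p,n}^x + (1-s) H_{p,n}^{x_*}\}\,ds \le \tfrac12(Z_{p,n}^x + Z_{p,n}^{x_*})$, so it suffices to show that $\sup_{x}\,\overline{\mathbb{E}}[|Z_{p,n}^x|^{2r}] < \infty$. Conditioning on the trajectory $\{X_s^x\}$, which is independent of $\{Y_t\}$ under $\overline{\mathbb{P}}$, the Wiener integral $\int_p^n h(X_s^x)^* dY_s$ is Gaussian with variance $\int_p^n \|h(X_s^x)\|_2^2\,ds$, and a direct moment generating function computation yields $\overline{\mathbb{E}}[|Z_{p,n}^x|^{2r}\mid \{X_s^x\}] = \exp\{r(2r-1)\int_p^n \|h(X_s^x)\|_2^2\,ds\}$; since $h$ is bounded and $n-p \le T < \infty$, this is bounded by a finite constant independent of $x$.

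For (ii), decompose $H_{p,n}^x - H_{p,n}^{x_*} = M - R$, where $M := \int_p^n (h(X_s^x) - h(X_s^{x_*}))^* dY_s$ is a continuous martingale and $R := \tfrac12\int_p^n (\|h(X_s^x)\|_2^2 - \|h(X_s^{x_*})\|_2^2)\,ds$. By the $C_{2r}$-inequality it is enough to bound $\overline{\mathbb{E}}[|M|^{2r}]$ and $\overline{\mathbb{E}}[|R|^{2r}]$. For the martingale part I would apply the BGD inequality, as in the proof of Lemma \ref{lem:nlf_lem1}, followed by Jensen/Minkowski to obtain $\overline{\mathbb{E}}[|M|^{2r}] \le C\,\overline{\mathbb{E}}[(\int_p^n \|h(X_s^x)-h(X_s^{x_*})\|_2^2\,ds)^r] \le C\int_p^n \overline{\mathbb{E}}[\|X_s^x - X_s^{x_*}\|_2^{2r}]\,ds$, using that each $h^{(i)} \in \textrm{Lip}_{\|\cdot\|_2}(\mathbb{R}^{d_x})$. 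Since $h$ is bounded and Lipschitz, $\|h\|_2^2$ is also Lipschitz, so a direct Jensen estimate gives the same reduction, $\overline{\mathbb{E}}[|R|^{2r}] \le C\int_p^n \overline{\mathbb{E}}[\|X_s^x - X_s^{x_*}\|_2^{2r}]\,ds$. Finally, since $X_s^x$ and $X_s^{x_*}$ solve \eqref{eq:state} with the same driving Brownian motion, the standard Gronwall stability estimate for SDEs with Lipschitz coefficients under (D\ref{hyp_diff:1}) (e.g.~\cite{kloeden}) yields $\sup_{s\in[p,n]}\overline{\mathbb{E}}[\|X_s^x - X_s^{x_*}\|_2^{2r}] \le C\|x-x_*\|_2^{2r}$. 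Combining gives $\overline{\mathbb{E}}[|H_{p,n}^x - H_{p,n}^{x_*}|^{2r}] \le C\|x-x_*\|_2^{2r}$, and together with (i) and Cauchy--Schwarz this proves $\overline{\mathbb{E}}[|Z_{p,n}^x - Z_{p,n}^{x_*}|^r] \le C\|x-x_*\|_2^r$, as required.

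The step I expect to be the main obstacle is (i): the uniform-in-$x$ control of the exponential moments of the MVT integrand. One must argue carefully that the $-\tfrac12\int h^* h\,ds$ compensator together with the boundedness of $h$ keeps the moment generating function finite and bounded uniformly, which is precisely what prevents the $\|x-x_*\|_2$ factor extracted from the exponent difference from being amplified by an uncontrolled exponential weight.
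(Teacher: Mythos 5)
Your proof is correct and follows essentially the same route the paper sketches: the paper's proof of Lemma \ref{lem:nlf_lem2} consists precisely of the mean value theorem plus martingale-plus-remainder device from \eqref{eq:z_diff_mvt} in Lemma \ref{lem:nlf_lem1} (BGD inequality for the stochastic-integral part), combined with the flow stability estimate \eqref{eq:cont_exp_diff}, $\sup_{t\in[p,n]}\overline{\mathbb{E}}[\|X_t^x-X_t^{x_*}\|_2^{2r}]^{1/(2r)}\leq C\|x-x_*\|_2$, which you derive via Gr\"onwall exactly as the paper does (citing \cite{kloeden} rather than \cite{rogers}). Your step (i), including the conditional Gaussian computation giving $\overline{\mathbb{E}}[|Z_{p,n}^x|^{2r}\,|\,\{X_s^x\}]=\exp\{r(2r-1)\int_p^n\|h(X_s^x)\|_2^2\,ds\}$, correctly fills in the uniform exponential-moment control that the paper leaves implicit.
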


\begin{proof}
This result can be proved in a similar manner to considering \eqref{eq:z_diff_mvt} in the proof of Lemma \ref{lem:nlf_lem1}, that is by using the MVT and a Martingale plus remainder method.
The main difference is that one must use the result (which can be deduced by \cite[Corollary v.11.7]{rogers} and the Gr\"onwall's inequality)
\begin{equation}\label{eq:cont_exp_diff}
\sup_{t\in[p,n]}\overline{\mathbb{E}}[\|X_t^x-X_{t}^{x_*}\|_2^{2r}]^{1/(2r)} \leq C\|x-x_{*}\|_2.
\end{equation}
The proof is omitted due to the similarity to the proof associated to \eqref{eq:z_diff_mvt}.
\end{proof}

\begin{lem}\label{lem:nlf_lem3}
Assume (D\ref{hyp_diff:1}). Then for any $(n,r)\in\mathbb{N}_0\times\mathbb{N}$, there exists a $C<+\infty$ such that for any $(\varphi,x,x_{*})\in\mathcal{B}_b(\mathbb{R}^{d_x})\cap\textrm{\emph{Lip}}_{\|\cdot\|_2}(\mathbb{R}^{d_x})\times\mathbb{R}^{d_x}\times\mathbb{R}^{d_x}$
$$
\overline{\mathbb{E}}\Big[\Big|\varphi(X_{n+1}^x)Z_{n+1}^x-
\varphi(X_{n+1}^{x_*})Z_{n+1}^{x_*}\Big|^r\Big]^{1/r} \leq C(\|\varphi\|+\|\varphi\|_{\textrm{\emph{Lip}}})\|x-x_*\|_2.
$$
\end{lem}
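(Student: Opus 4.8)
The plan is to follow the same two-term splitting strategy used for $T_1$ in the proof of Lemma \ref{lem:nlf_lem1}, but now comparing two continuous trajectories started from different initial points $x$ and $x_*$ rather than comparing a discretized path to its continuous limit. First I would add and subtract $\varphi(X_{n+1}^{x_*})Z_{n+1}^x$, writing
$$
\varphi(X_{n+1}^x)Z_{n+1}^x-\varphi(X_{n+1}^{x_*})Z_{n+1}^{x_*} = \big[\varphi(X_{n+1}^x)-\varphi(X_{n+1}^{x_*})\big]Z_{n+1}^x + \varphi(X_{n+1}^{x_*})\big[Z_{n+1}^x-Z_{n+1}^{x_*}\big],
$$
and then apply the Minkowski ($C_r$-)inequality, so that it suffices to bound the $\mathbb{L}_r$-norm of each summand by $C(\|\varphi\|+\|\varphi\|_{\textrm{Lip}})\|x-x_*\|_2$.

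For the first summand I would use that $\varphi\in\textrm{Lip}_{\|\cdot\|_2}(\mathbb{R}^{d_x})$ to obtain the pointwise bound $|\varphi(X_{n+1}^x)-\varphi(X_{n+1}^{x_*})| \leq \|\varphi\|_{\textrm{Lip}}\|X_{n+1}^x-X_{n+1}^{x_*}\|_2$. I would then apply Cauchy--Schwarz (or H\"older with a conjugate pair) to decouple the state increment from the weight $Z_{n+1}^x$. The factor carrying $Z_{n+1}^x$ is handled by taking expectations w.r.t.~$\{Y_t\}$: under $\overline{\mathbb{P}}$, $\{Y_t\}$ is a Brownian motion independent of the signal, so conditional on the signal path the exponent $\int_0^{n+1} h(X_s^x)^* dY_s - \tfrac12\int_0^{n+1}\|h(X_s^x)\|_2^2\,ds$ is Gaussian with variance at most $(n+1)\|h\|^2$; since $h$ is bounded under (D\ref{hyp_diff:1}), this yields a uniform moment bound $\sup_x\overline{\mathbb{E}}[|Z_{n+1}^x|^{2r}]\leq C$ (exactly as in the treatment of $T_3$ in Lemma \ref{lem:nlf_lem1}). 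The remaining factor $\overline{\mathbb{E}}[\|X_{n+1}^x-X_{n+1}^{x_*}\|_2^{2r}]^{1/(2r)}$ is controlled by \eqref{eq:cont_exp_diff}, giving a bound of order $\|\varphi\|_{\textrm{Lip}}\|x-x_*\|_2$.

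For the second summand I would bound $|\varphi(X_{n+1}^{x_*})|\leq\|\varphi\|$ pointwise, reducing the task to controlling $\overline{\mathbb{E}}[|Z_{n+1}^x-Z_{n+1}^{x_*}|^r]^{1/r}$, which is precisely Lemma \ref{lem:nlf_lem2} with $p=0$ and terminal time $n+1$; this produces a bound of order $\|\varphi\|\,\|x-x_*\|_2$. Combining the two estimates yields the claim. The only genuinely delicate point is the correct decoupling of $Z_{n+1}^x$ from the state difference via H\"older together with the uniform-in-$x$ moment bound on $Z_{n+1}^x$; however, this moment control is already established (it is the same argument used for the $\{Y_t\}$-expectations in Lemma \ref{lem:nlf_lem1}), and both \eqref{eq:cont_exp_diff} and Lemma \ref{lem:nlf_lem2} are available, so no new machinery is required and the remaining steps are routine.
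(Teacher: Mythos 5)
Your proposal is correct and takes essentially the same route as the paper's own proof: the identical splitting into $\big[\varphi(X_{n+1}^x)-\varphi(X_{n+1}^{x_*})\big]Z_{n+1}^x$ and $\varphi(X_{n+1}^{x_*})\big[Z_{n+1}^x-Z_{n+1}^{x_*}\big]$, with Cauchy--Schwarz plus the uniform-in-$x$ moment bound $\overline{\mathbb{E}}[|Z_{n+1}^{x}|^{2r}]^{1/(2r)}\leq C$ and \eqref{eq:cont_exp_diff} handling the first term, and the supremum bound on $\varphi$ together with Lemma \ref{lem:nlf_lem2} (with $p=0$, using the convention $Z_{0,n+1}^x=Z_{n+1}^x$) handling the second. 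No gaps.
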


\begin{proof}
We have
$$
\overline{\mathbb{E}}\Big[\Big|\varphi(X_{n+1}^x)Z_{n+1}^x-
\varphi(X_{n+1}^{x_*})Z_{n+1}^{x_*}\Big|^r\Big]^{1/r} \leq T_1 + T_2
$$
where
\begin{eqnarray*}
T_1 & := & \overline{\mathbb{E}}\Big[\Big|\{\varphi(X_{n+1}^x)-\varphi(X_{n+1}^{x_*})\}Z_{n+1}^x\Big|^r\Big]^{1/r}\\
T_2 & := & \overline{\mathbb{E}}\Big[\Big|\varphi(X_{n+1}^{x_*})\{Z_{n+1}^x-Z_{n+1}^{x_*}\}\Big|^r\Big]^{1/r}.
\end{eqnarray*}
So we proceed to control the two terms in $T_1$ and $T_2$.

For $T_1$, apply Cauchy-Schwarz to obtain the upper-bound
$$
T_1 \leq \overline{\mathbb{E}}[|Z_{n+1}^{x}|^{2r}]^{1/(2r)}\mathbb{\overline{E}}\Big[\Big|\{\varphi(X_{n+1}^x)-\varphi(X_{n+1}^{x_*})\}\Big|^{2r}\Big]^{1/(2r)}.
$$
As $\overline{\mathbb{E}}[|Z_{n+1}^{x}|^{2r}]^{1/(2r)}\leq C$
and using $\varphi\in\textrm{Lip}_{\|\cdot\|_2}(\mathbb{R}^{d_x})$ along with \eqref{eq:cont_exp_diff} yields
\begin{equation}\label{eq:nlf_lem3_2}
T_1 \leq C(\|\varphi\|+\|\varphi\|_{\textrm{Lip}})\|x-x_{*}\|_2.
\end{equation}
For $T_2$ using $\varphi\in\mathcal{B}_b(\mathbb{R}^{d_x})$
$$
T_2 \leq (\|\varphi\|+\|\varphi\|_{\textrm{Lip}})\overline{\mathbb{E}}\Big[\Big|Z_{n+1}^x-Z_{n+1}^{x_*}\Big|^{r}\Big]^{1/r}.
$$
Applying Lemma \ref{lem:nlf_lem2} gives
\begin{equation}\label{eq:nlf_lem3_3}
T_2 \leq C(\|\varphi\|+\|\varphi\|_{\textrm{Lip}})\|x-x_{*}\|_2.
\end{equation}
Noting \eqref{eq:nlf_lem3_2} and \eqref{eq:nlf_lem3_3} allows one to conclude.
\end{proof}

\begin{lem}\label{lem:main_nlf_lem}
Assume (D\ref{hyp_diff:1}). Then for any $(n,r)\in\mathbb{N}_0\times\mathbb{N}$, there exists a $C<+\infty$ such that for any $(l,\varphi,x,x_{*})\in\mathbb{N}\times\mathcal{B}_b(\mathbb{R}^{d_x})\cap\textrm{\emph{Lip}}_{\|\cdot\|_2}(\mathbb{R}^{d_x})\times\mathbb{R}^{d_x}\times\mathbb{R}^{d_x}$
$$
\overline{\mathbb{E}}\Big[\Big|\varphi(\widetilde{X}_{n+1}^x)Z_{n+1}^l(\widetilde{X}_0^x,\widetilde{X}_{\Delta_l}^x,\dots,\widetilde{X}_{n+1-\Delta_l}^x)-
\varphi(\widetilde{X}_{n+1}^{x_*})Z_{n+1}^{l-1}(\widetilde{X}_0^{x_*},\widetilde{X}_{\Delta_{l-1}}^{x_*},\dots,\widetilde{X}_{n+1-\Delta_{l-1}}^{x_*})
\Big|^r\Big]^{1/r}
 \leq 
$$
$$
C(\|\varphi\|+\|\varphi\|_{\textrm{\emph{Lip}}})\Big(\Delta_l^{1/2}+\|x-x_*\|_2\Big).
$$
\end{lem}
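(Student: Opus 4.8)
The plan is to reduce the claim to the three preceding lemmas by a triangle inequality, inserting as intermediate terms the two continuous-time quantities $\varphi(X_{n+1}^x)Z_{n+1}^x$ (started at $x$) and $\varphi(X_{n+1}^{x_*})Z_{n+1}^{x_*}$ (started at $x_*$). Writing $\|\cdot\|_r := \overline{\mathbb{E}}[|\cdot|^r]^{1/r}$ and using Minkowski's inequality, the left-hand side is bounded by $T_1+T_2+T_3$, where $T_1$ is the $\mathbb{L}_r$-distance between the level-$l$ discretized object started at $x$ and its continuous-time counterpart $\varphi(X_{n+1}^x)Z_{n+1}^x$; $T_2$ is the $\mathbb{L}_r$-distance between the two continuous-time objects started at $x$ and $x_*$; and $T_3$ is the $\mathbb{L}_r$-distance between $\varphi(X_{n+1}^{x_*})Z_{n+1}^{x_*}$ and the level-$(l-1)$ discretized object started at $x_*$.

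Each summand is then controlled by exactly one earlier result. First, Lemma \ref{lem:nlf_lem1} applied at level $l$ with initial condition $x$ gives $T_1 \leq C(\|\varphi\|+\|\varphi\|_{\textrm{Lip}})\Delta_l^{1/2}$. Next, the same lemma applied at level $l-1$ with initial condition $x_*$ gives $T_3 \leq C(\|\varphi\|+\|\varphi\|_{\textrm{Lip}})\Delta_{l-1}^{1/2}$; since $\Delta_{l-1}=2\Delta_l$ we have $\Delta_{l-1}^{1/2}=\sqrt{2}\,\Delta_l^{1/2}$, so this is again $\mathcal{O}(\Delta_l^{1/2})$. Finally, $T_2$ is exactly the quantity bounded in Lemma \ref{lem:nlf_lem3}, yielding $T_2 \leq C(\|\varphi\|+\|\varphi\|_{\textrm{Lip}})\|x-x_*\|_2$. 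Summing the three bounds and absorbing the constant $\sqrt{2}$ into $C$ produces the desired estimate $C(\|\varphi\|+\|\varphi\|_{\textrm{Lip}})(\Delta_l^{1/2}+\|x-x_*\|_2)$.

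This argument is a clean assembly rather than a new estimate, so I do not expect a substantial obstacle. The only points meriting care are: (i) choosing the intermediate continuous-time objects so that the level-$(l-1)$ term is compared against the continuous process \emph{started at $x_*$}, which lets the discretization error be captured purely by Lemma \ref{lem:nlf_lem1} and the remaining initial-condition discrepancy purely by Lemma \ref{lem:nlf_lem3}; and (ii) observing that the coarser step-size contributes $\Delta_{l-1}^{1/2}$, which is of the same order as $\Delta_l^{1/2}$ because consecutive levels differ by the fixed factor two. No control of the maximal coupling nor of the path-weight ratios is required here, since those enter only in the later coupled-particle-filter estimates.
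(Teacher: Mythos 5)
Your proposal is correct and is essentially identical to the paper's own proof: the paper also bounds the left-hand side by $T_1+T_2+T_3$ via Minkowski, with the continuous-time quantities $\varphi(X_{n+1}^x)Z_{n+1}^x$ and $\varphi(X_{n+1}^{x_*})Z_{n+1}^{x_*}$ as intermediates, applying Lemma \ref{lem:nlf_lem1} to $T_1$ and $T_3$ and Lemma \ref{lem:nlf_lem3} to $T_2$. Your additional remarks---that Lemma \ref{lem:nlf_lem1} at level $l-1$ only costs a factor $\sqrt{2}$ since $\Delta_{l-1}=2\Delta_l$, and that this application is legitimate because $l-1\in\mathbb{N}_0$ when $l\in\mathbb{N}$---are accurate details the paper leaves implicit.
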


\begin{proof}
The expectation in the statement of the Lemma is upper-bounded by $\sum_{j=1}^3 T_j$ where
\begin{eqnarray*}
T_1 & := &  \overline{\mathbb{E}}\Big[\Big|\varphi(\widetilde{X}_{n+1}^x)Z_{n+1}^l(\widetilde{X}_0^x,\widetilde{X}_{\Delta_l}^x,\dots,\widetilde{X}_{n+1-\Delta_l}^x)-
\varphi(X_{n+1}^x)Z_{n+1}^x\Big|^r\Big]^{1/r} \\
T_2 & := & \overline{\mathbb{E}}\Big[\Big|
\varphi(X_{n+1}^x)Z_{n+1}^x-
\varphi(X_{n+1}^{x_*})Z_{n+1}^{x_*}
\Big|^r\Big]^{1/r} \\
T_3 & := & \overline{\mathbb{E}}\Big[\Big|\varphi(\widetilde{X}_{n+1}^{x_*})Z_{n+1}^{l-1}(\widetilde{X}_0^{x_*},\widetilde{X}_{\Delta_{l-1}}^{x_*},\dots,\widetilde{X}_{n+1-\Delta_{l-1}}^{x_*})-
\varphi(X_{n+1}^{x_*})Z_{n+1}^{x_*}\Big|^r\Big]^{1/r}.
\end{eqnarray*}
The proof is completed by applying Lemma \ref{lem:nlf_lem1} to $T_1$ and $T_3$, and Lemma \ref{lem:nlf_lem3} to $T_2$.
\end{proof}

\begin{lem}\label{lem:nlf_lem5}
Assume (D\ref{hyp_diff:1}). Then for any $(n,p,r)\in\mathbb{N}_0\times\mathbb{N}^2$, there exists a $C<+\infty$ such that for any $(l,\varphi,x)\in\mathbb{N}_0\times\mathcal{B}_b(\mathbb{R}^{d_x})\cap\textrm{\emph{Lip}}_{\|\cdot\|_2}(\mathbb{R}^{d_x})\times\mathbb{R}^{d_x}$
$$
\overline{\mathbb{E}}\Bigg[\Bigg|\frac{\overline{\mathbb{E}}[\varphi(\widetilde{X}_{n+1}^x)Z_{n}^l(\widetilde{X}_0^x,\widetilde{X}_{\Delta_l}^x,\dots,\widetilde{X}_{n-\Delta_l}^x)|\mathcal{Y}_n]}
{\overline{\mathbb{E}}[Z_{p}^l(\widetilde{X}_0^x,\widetilde{X}_{\Delta_l}^x,\dots,\widetilde{X}_{p-\Delta_l}^x)|\mathcal{Y}_{p}]}-\frac{\overline{\mathbb{E}}[\varphi(X_{n+1}^x)Z_{n}^x|\mathcal{Y}_{n}]}{\overline{\mathbb{E}}[Z_{p}^x|\mathcal{Y}_{p}]}\Bigg|^r\Bigg]^{1/r}
\leq C(\|\varphi\|+\|\varphi\|_{\textrm{\emph{Lip}}})\Delta_l^{1/2}.
$$
\end{lem}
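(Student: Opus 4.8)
The plan is to reduce the statement to the strong error bound already established in Lemma~\ref{lem:nlf_lem1}, together with uniform moment control of the (random) denominators. Introduce the shorthand
$$
\mathsf{N}^l := \overline{\mathbb{E}}[\varphi(\widetilde{X}_{n+1}^x)Z_{n}^l(\widetilde{X}_0^x,\dots,\widetilde{X}_{n-\Delta_l}^x)\mid\mathcal{Y}_n], \qquad \mathsf{D}^l := \overline{\mathbb{E}}[Z_{p}^l(\widetilde{X}_0^x,\dots,\widetilde{X}_{p-\Delta_l}^x)\mid\mathcal{Y}_{p}],
$$
and the continuous-time analogues $\mathsf{N}:=\overline{\mathbb{E}}[\varphi(X_{n+1}^x)Z_n^x\mid\mathcal{Y}_n]$ and $\mathsf{D}:=\overline{\mathbb{E}}[Z_p^x\mid\mathcal{Y}_p]$, so that the quantity to be bounded is $\|\mathsf{N}^l/\mathsf{D}^l-\mathsf{N}/\mathsf{D}\|_{\mathbb{L}_r}$. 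First I would use the identity
$$
\frac{\mathsf{N}^l}{\mathsf{D}^l}-\frac{\mathsf{N}}{\mathsf{D}} = \frac{\mathsf{N}^l-\mathsf{N}}{\mathsf{D}^l} + \frac{\mathsf{N}}{\mathsf{D}}\cdot\frac{\mathsf{D}-\mathsf{D}^l}{\mathsf{D}^l},
$$
and bound the two summands separately by H\"older's inequality.

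Before treating the two summands, I would record the relevant moment bounds. By \eqref{eq:g_lower}, both $\mathsf{D}^l$ and $\mathsf{D}$ are, almost surely, bounded below by a product of the deterministic (given the data) lower bounds $\underline{\mathbf{G}}$, which possess finite $\mathbb{L}_{-s}$ moments uniformly in $l$ for every $s\in\mathbb{N}$ (as observed after \eqref{eq:g_lower}); hence $\overline{\mathbb{E}}[(\mathsf{D}^l)^{-s}]$ and $\overline{\mathbb{E}}[\mathsf{D}^{-s}]$ are bounded uniformly in $l$. Likewise, using the upper bound \eqref{eq:g_upper} and conditional Jensen, $\mathsf{N}$ and $\mathsf{N}/\mathsf{D}$ have $\mathbb{L}_s$-norms bounded by $C(\|\varphi\|+\|\varphi\|_{\textrm{Lip}})$ uniformly in $l$. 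Next I would control the two increments: since $\mathsf{N}^l$ and $\mathsf{N}$ are both conditioned on $\mathcal{Y}_n$, conditional Jensen gives
$$
\overline{\mathbb{E}}[|\mathsf{N}^l-\mathsf{N}|^{s}] \le \overline{\mathbb{E}}\big[\big|\varphi(\widetilde{X}_{n+1}^x)Z_{n}^l(\widetilde{X}_0^x,\dots,\widetilde{X}_{n-\Delta_l}^x)-\varphi(X_{n+1}^x)Z_n^x\big|^{s}\big],
$$
and the right-hand side is $\le C(\|\varphi\|+\|\varphi\|_{\textrm{Lip}})^s\Delta_l^{s/2}$ by Lemma~\ref{lem:nlf_lem1} (the harmless shift between the index $n$ of the potential product and the index $n+1$ of the test function only requires a verbatim rerun of that argument). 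The same reasoning, now conditioning on $\mathcal{Y}_p$ and taking $\varphi\equiv 1$ in Lemma~\ref{lem:nlf_lem1}, yields $\overline{\mathbb{E}}[|\mathsf{D}^l-\mathsf{D}|^{s}]\le C\Delta_l^{s/2}$.

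Finally I would assemble the bound. For the first summand, Cauchy--Schwarz gives $\|(\mathsf{N}^l-\mathsf{N})/\mathsf{D}^l\|_{\mathbb{L}_r}\le\|\mathsf{N}^l-\mathsf{N}\|_{\mathbb{L}_{2r}}\,\|(\mathsf{D}^l)^{-1}\|_{\mathbb{L}_{2r}}$, and for the second summand a threefold H\"older split gives $\|(\mathsf{N}/\mathsf{D})(\mathsf{D}-\mathsf{D}^l)/\mathsf{D}^l\|_{\mathbb{L}_r}\le\|\mathsf{N}/\mathsf{D}\|_{\mathbb{L}_{3r}}\,\|\mathsf{D}-\mathsf{D}^l\|_{\mathbb{L}_{3r}}\,\|(\mathsf{D}^l)^{-1}\|_{\mathbb{L}_{3r}}$; inserting the bounds from the previous paragraph delivers the claimed rate $C(\|\varphi\|+\|\varphi\|_{\textrm{Lip}})\Delta_l^{1/2}$. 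The main obstacle is not any single estimate but the bookkeeping that keeps all the H\"older exponents mutually consistent while ensuring that every negative moment of the random denominators $\mathsf{D}^l,\mathsf{D}$ is controlled uniformly in $l$; this uniformity is exactly what \eqref{eq:g_upper}--\eqref{eq:g_lower} and their stated integrability provide, so once those are in hand the remaining steps are routine.
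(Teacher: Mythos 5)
Your proof is correct, and while it ultimately rests on the same two core estimates as the paper (the strong Euler path error and the MVT/martingale-plus-remainder control of the likelihood functional, both already packaged in Lemma \ref{lem:nlf_lem1}), your decomposition is genuinely different and more economical. The paper first inserts an intermediate ratio in which the discretized functional $Z^l$ is evaluated along the \emph{continuous} path, yielding a two-stage split ($T_1$: Euler path versus continuous path with the discretized functional; $T_2$: discretized versus continuous functional, treated as in \cite[Theorem 21.3]{crisan}), and then splits each stage into numerator and denominator differences whose bounds are re-derived in-line via the arguments of \eqref{eq:z_diff_mvt} and \eqref{eq:nlf_main_1}. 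You instead perform a single numerator/denominator split of the ratio and collapse both stages at once by conditional Jensen into the unconditioned difference bounded in Lemma \ref{lem:nlf_lem1}; this buys modularity (the lemma is used as a black box) at the cost of needing its index-shifted variant ($Z_n^l$ paired with $\varphi(\widetilde{X}_{n+1}^x)$, and $\varphi\equiv 1$ with $n+1=p$ for the denominator), which, as you note, follows from a verbatim rerun of that proof --- in effect this is what the paper does anyway, since it invokes the ideas rather than the statement. One inaccuracy to repair: the pathwise bounds \eqref{eq:g_upper}--\eqref{eq:g_lower} apply only to the \emph{discretized} potentials, so your claim that the continuous-time denominator $\mathsf{D}=\overline{\mathbb{E}}[Z_p^x\mid\mathcal{Y}_p]$ is almost surely bounded below by a product of $\underline{\mathbf{G}}$-terms is not literally true; no such pathwise bound in terms of $Y$ alone exists in continuous time, because $\int_0^p h(X_s)^*dY_s$ cannot be controlled increment-by-increment as in the piecewise-constant case. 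The negative moments you need nevertheless hold by an equally routine route: conditional Jensen with the convex map $t\mapsto t^{-s}$ gives $\overline{\mathbb{E}}[\mathsf{D}^{-s}]\le\overline{\mathbb{E}}[(Z_p^x)^{-s}]$, and conditioning on the signal path under $\overline{\mathbb{P}}$ makes $\int_0^p h(X_s)^*dY_s$ conditionally Gaussian with variance bounded by a constant (as $h$ is bounded), whence the uniform bound; the same remark applies to your appeal to \eqref{eq:g_upper} for the positive moments of the continuous-time numerator $\mathsf{N}$. With that substitution, your H\"older bookkeeping is consistent and the argument delivers the stated rate $C(\|\varphi\|+\|\varphi\|_{\textrm{Lip}})\Delta_l^{1/2}$.
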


\begin{proof}
We have
$$
\overline{\mathbb{E}}\Bigg[\Bigg|\frac{\overline{\mathbb{E}}[\varphi(\widetilde{X}_{n+1}^x)Z_{n}^l(\widetilde{X}_0^x,\widetilde{X}_{\Delta_l}^x,\dots,\widetilde{X}_{n-\Delta_l}^x)|\mathcal{Y}_n]}
{\overline{\mathbb{E}}[Z_{p}^l(\widetilde{X}_0^x,\widetilde{X}_{\Delta_l}^x,\dots,\widetilde{X}_{p-\Delta_l}^x)|\mathcal{Y}_{p}]}-\frac{\overline{\mathbb{E}}[\varphi(X_{n+1}^x)Z_{n}^x|\mathcal{Y}_{n}]}{\overline{\mathbb{E}}[Z_{p}^x|\mathcal{Y}_{p}]}\Bigg|^r\Bigg]^{1/r} \leq T_1 + T_2
$$
where
\begin{eqnarray*}
T_1 & := & \overline{\mathbb{E}}\Bigg[\Bigg|\frac{\overline{\mathbb{E}}[\varphi(\widetilde{X}_{n+1}^x)Z_{n}^l(\widetilde{X}_0^x,\widetilde{X}_{\Delta_l}^x,\dots,\widetilde{X}_{n-\Delta_l}^x)|\mathcal{Y}_n]}
{\overline{\mathbb{E}}[Z_{p}^l(\widetilde{X}_0^x,\widetilde{X}_{\Delta_l}^x,\dots,\widetilde{X}_{p-\Delta_l}^x)|\mathcal{Y}_{p}]} - 
\frac{\overline{\mathbb{E}}[\varphi(X_{n+1}^x)Z_{n}^l(X_0^x,X_{\Delta_l}^x,\dots,X_{n-\Delta_l}^x)|\mathcal{Y}_n]}
{\overline{\mathbb{E}}[Z_{p}^l(X_0^x,X_{\Delta_l}^x,\dots,X_{p-\Delta_l}^x)|\mathcal{Y}_{p}]}\Bigg|^r\Bigg]^{1/r} \\
T_2 & = & \overline{\mathbb{E}}\Bigg[\Bigg|\frac{\overline{\mathbb{E}}[\varphi(X_{n+1}^x)Z_{n}^l(X_0^x,X_{\Delta_l}^x,\dots,X_{n-\Delta_l}^x)|\mathcal{Y}_n]}
{\overline{\mathbb{E}}[Z_{p}^l(X_0^x,X_{\Delta_l}^x,\dots,X_{p-\Delta_l}^x)|\mathcal{Y}_{p}]} - 
\frac{\overline{\mathbb{E}}[\varphi(X_{n+1}^x)Z_{n}^x|\mathcal{Y}_{n}]}{\overline{\mathbb{E}}[Z_{p}^x|\mathcal{Y}_{p}]}\Bigg|^r\Bigg]^{1/r}.
\end{eqnarray*}
$T_2$ can be dealt with in a similar way to $T_1$, except one uses approaches similar to \cite[Theorem 21.3]{crisan} (which is a similar MVT, Martingale plus remainder method that has been used in the proof of Lemma \ref{lem:nlf_lem1}), so we treat the former only.

Now, we have
$$
T_1 \leq T_3 + T_4
$$
where\begin{eqnarray*}
T_3 & := & \overline{\mathbb{E}}\Bigg[\Bigg|\frac{\overline{\mathbb{E}}[\varphi(\widetilde{X}_{n+1}^x)Z_{n}^l(\widetilde{X}_0^x,\widetilde{X}_{\Delta_l}^x,\dots,\widetilde{X}_{n-\Delta_l}^x)|\mathcal{Y}_n]}{\overline{\mathbb{E}}[Z_{p}^l(\widetilde{X}_0^x,\widetilde{X}_{\Delta_l}^x,\dots,\widetilde{X}_{p-\Delta_l}^x)|\mathcal{Y}_{p}]
\overline{\mathbb{E}}[Z_{p}^l(X_0^x,X_{\Delta_l}^x,\dots,X_{p-\Delta_l}^x)|\mathcal{Y}_{p}]}\Big(\overline{\mathbb{E}}[Z_{p}^l(X_0^x,X_{\Delta_l}^x,\dots,X_{p-\Delta_l}^x)|\mathcal{Y}_{p}]-\\ & & \overline{\mathbb{E}}[Z_{p}^l(\widetilde{X}_0^x,\widetilde{X}_{\Delta_l}^x,\dots,\widetilde{X}_{p-\Delta_l}^x)|\mathcal{Y}_{p}]\Big)
\Bigg|^r\Bigg]^{1/r}\\
T_4 & := & \overline{\mathbb{E}}\Bigg[\Bigg|\frac{1}{\overline{\mathbb{E}}[Z_{p}^l(X_0^x,X_{\Delta_l}^x,\dots,X_{p-\Delta_l}^x)|\mathcal{Y}_{p}]}\Big(\overline{\mathbb{E}}[\varphi(\widetilde{X}_{n+1}^x)Z_{n}^l(\widetilde{X}_0^x,\widetilde{X}_{\Delta_l}^x,\dots,\widetilde{X}_{n-\Delta_l}^x)|\mathcal{Y}_n]- \\ & &
\overline{\mathbb{E}}[\varphi(X_{n+1}^x)Z_{n}^l(X_0^x,X_{\Delta_l}^x,\dots,X_{n-\Delta_l}^x)|\mathcal{Y}_n]\Big)
\Bigg|^r\Bigg]^{1/r}.
\end{eqnarray*}
For $T_3$ applying Cauchy-Schwarz and conditional Jensen,
$$
T_3 \leq \overline{\mathbb{E}}\Bigg[\Bigg|\frac{\overline{\mathbb{E}}[\varphi(\widetilde{X}_{n+1}^x)Z_{n}^l(\widetilde{X}_0^x,\widetilde{X}_{\Delta_l}^x,\dots,\widetilde{X}_{n-\Delta_l}^x)|\mathcal{Y}_n]}{\overline{\mathbb{E}}[Z_{p}^l(\widetilde{X}_0^x,\widetilde{X}_{\Delta_l}^x,\dots,\widetilde{X}_{p-\Delta_l}^x)|\mathcal{Y}_{p}]
\overline{\mathbb{E}}[Z_{p}^l(X_0^x,X_{\Delta_l}^x,\dots,X_{p-\Delta_l}^x)|\mathcal{Y}_{p}]}\Bigg|^{2r}\Bigg]^{1/(2r)}\times
$$
$$
\overline{\mathbb{E}}\Big[\Big|Z_{p}^l(X_0^x,X_{\Delta_l}^x,\dots,X_{p-\Delta_l}^x)-Z_{p}^l(\widetilde{X}_0^x,\widetilde{X}_{\Delta_l}^x,\dots,\widetilde{X}_{p-\Delta_l}^x)\Big|^{2r}\Big]^{1/(2r)}.
$$
For the left-most expectation on the R.H.S.~one can use $\varphi\in\mathcal{B}_b(\mathbb{R}^{d_x})$, the H\"older and Jensen inequalities along with the proof approaches in the proof of Lemma \ref{lem:res1} to establish that the expectation is upper-bounded by a constant $C$ that does not depend upon $l,x$.
For the right-most expectation on the R.H.S.~one can use the ideas in \eqref{eq:z_diff_mvt} to deduce that
$$
T_3 \leq  C(\|\varphi\|+\|\varphi\|_{\textrm{Lip}})\Delta_l^{1/2}.
$$
The proof for $T_4$ is similar, except using the ideas for \eqref{eq:nlf_main_1} instead of \eqref{eq:z_diff_mvt}. That is, $T_4 \leq  C(\|\varphi\|+\|\varphi\|_{\textrm{Lip}})\Delta_l^{1/2}$. Hence
$$
T_1 \leq  C(\|\varphi\|+\|\varphi\|_{\textrm{Lip}})\Delta_l^{1/2}.
$$
This completes the argument.
\end{proof}

\begin{rem}\label{rem:theo_bias}
One can easily deduce: Assume (D\ref{hyp_diff:1}). Then for any $(n,r)\in\mathbb{N}_0\times\mathbb{N}$, there exists a $C<+\infty$ such that for any $(l,\varphi,x)\in\mathbb{N}_0\times\mathcal{B}_b(\mathbb{R}^{d_x})\cap\textrm{\emph{Lip}}_{\|\cdot\|_2}(\mathbb{R}^{d_x})\times\mathbb{R}^{d_x}$
$$
\overline{\mathbb{E}}\Bigg[\Bigg|\frac{\overline{\mathbb{E}}[\varphi(\widetilde{X}_{n}^x)Z_{n}^l(\widetilde{X}_0^x,\widetilde{X}_{\Delta_l}^x,\dots,\widetilde{X}_{n-\Delta_l}^x)|\mathcal{Y}_n]}
{\overline{\mathbb{E}}[Z_{n}^l(\widetilde{X}_0^x,\widetilde{X}_{\Delta_l}^x,\dots,\widetilde{X}_{n-\Delta_l}^x)|\mathcal{Y}_{n}]}-\frac{\overline{\mathbb{E}}[\varphi(X_{n}^x)Z_{n}^x|\mathcal{Y}_{n}]}{\overline{\mathbb{E}}[Z_{n}^x|\mathcal{Y}_{n}]}\Bigg|^r\Bigg]^{1/r}
\leq C(\|\varphi\|+\|\varphi\|_{\textrm{\emph{Lip}}})\Delta_l^{1/2}.
$$
\end{rem}

\begin{cor}\label{cor:nlf_cor1}
Assume (D\ref{hyp_diff:1}). Then for any $(n,p,r)\in\mathbb{N}_0\times\mathbb{N}^2$, there exists a $C<+\infty$ such that for any $(l,\varphi,x)\in\mathbb{N}\times\mathcal{B}_b(\mathbb{R}^{d_x})\cap\textrm{\emph{Lip}}_{\|\cdot\|_2}(\mathbb{R}^{d_x})\times\mathbb{R}^{d_x}$
$$
\overline{\mathbb{E}}\Bigg[\Bigg|\frac{\overline{\mathbb{E}}[\varphi(\widetilde{X}_{n+1}^x)Z_{n}^l(\widetilde{X}_0^x,\widetilde{X}_{\Delta_l}^x,\dots,\widetilde{X}_{n-\Delta_l}^x)|\mathcal{Y}_n]}
{\overline{\mathbb{E}}[Z_{p}^l(\widetilde{X}_0^x,\widetilde{X}_{\Delta_l}^x,\dots,\widetilde{X}_{p-\Delta_l}^x)|\mathcal{Y}_{p}]}-
\frac{\overline{\mathbb{E}}[\varphi(\widetilde{X}_{n+1}^x)Z_{n}^{l-1}(\widetilde{X}_0^x,\widetilde{X}_{\Delta_{l-1}}^x,\dots,\widetilde{X}_{n-\Delta_{l-1}}^x)|\mathcal{Y}_n]}
{\overline{\mathbb{E}}[Z_{p}^{l-1}(\widetilde{X}_0^x,\widetilde{X}_{\Delta_{l-1}}^x,\dots,\widetilde{X}_{p-\Delta_{l-1}}^x)|\mathcal{Y}_{p}]}
\Bigg|^r\Bigg]^{1/r}
$$
$$
\leq C(\|\varphi\|+\|\varphi\|_{\textrm{\emph{Lip}}})\Delta_l^{1/2}.
$$
\end{cor}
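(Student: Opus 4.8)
The plan is to route the comparison of the two level-dependent ratios through the undiscretized (continuous-time) filtering ratio, which does not depend on the level, and then invoke Lemma \ref{lem:nlf_lem5} twice. Write
$$
\Psi_l := \frac{\overline{\mathbb{E}}[\varphi(\widetilde{X}_{n+1}^x)Z_{n}^l(\widetilde{X}_0^x,\widetilde{X}_{\Delta_l}^x,\dots,\widetilde{X}_{n-\Delta_l}^x)|\mathcal{Y}_n]}{\overline{\mathbb{E}}[Z_{p}^l(\widetilde{X}_0^x,\widetilde{X}_{\Delta_l}^x,\dots,\widetilde{X}_{p-\Delta_l}^x)|\mathcal{Y}_{p}]}, \qquad \Psi := \frac{\overline{\mathbb{E}}[\varphi(X_{n+1}^x)Z_{n}^x|\mathcal{Y}_{n}]}{\overline{\mathbb{E}}[Z_{p}^x|\mathcal{Y}_{p}]},
$$
so that the quantity to be bounded is $\overline{\mathbb{E}}[|\Psi_l-\Psi_{l-1}|^r]^{1/r}$, and crucially $\Psi$ is the same object for every level.

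First I would apply Minkowski's inequality to insert $\Psi$ as an intermediate term:
$$
\overline{\mathbb{E}}[|\Psi_l-\Psi_{l-1}|^r]^{1/r} \leq \overline{\mathbb{E}}[|\Psi_l-\Psi|^r]^{1/r} + \overline{\mathbb{E}}[|\Psi-\Psi_{l-1}|^r]^{1/r}.
$$
Since $l\in\mathbb{N}$ guarantees $l-1\in\mathbb{N}_0$ and Lemma \ref{lem:nlf_lem5} holds for every level in $\mathbb{N}_0$, I would apply that lemma at level $l$ to the first term and at level $l-1$ to the second, obtaining the bounds $C(\|\varphi\|+\|\varphi\|_{\textrm{Lip}})\Delta_l^{1/2}$ and $C(\|\varphi\|+\|\varphi\|_{\textrm{Lip}})\Delta_{l-1}^{1/2}$ respectively. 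Finally, using $\Delta_{l-1}=2^{-(l-1)}=2\Delta_l$, and hence $\Delta_{l-1}^{1/2}=\sqrt{2}\,\Delta_l^{1/2}$, the two contributions combine into $(1+\sqrt{2})C(\|\varphi\|+\|\varphi\|_{\textrm{Lip}})\Delta_l^{1/2}$, which yields the claimed bound after renaming the constant.

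There is no genuinely hard step here: the corollary is essentially immediate once one recognizes that the continuous-time ratio $\Psi$ is level-independent, so the telescoping through $\Psi$ costs nothing and both legs are already controlled by Lemma \ref{lem:nlf_lem5}. The only points requiring a moment's care are that the intermediate object must be the fully continuous filter (rather than a discretization at some fixed reference level), so that both applications of the lemma are legitimate, and that the coarser step-size $\Delta_{l-1}$ is absorbed into $\Delta_l^{1/2}$ at the cost of the harmless factor $\sqrt{2}$.
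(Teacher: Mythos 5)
Your proof is correct and is exactly the argument the paper intends: its proof of Corollary \ref{cor:nlf_cor1} is the one-liner ``Can be easily proved by using Lemma \ref{lem:nlf_lem5}'', and the intended filling-in is precisely your triangle inequality through the level-independent continuous-time ratio, applying the lemma at levels $l$ and $l-1$ and absorbing $\Delta_{l-1}^{1/2}=\sqrt{2}\,\Delta_l^{1/2}$ into the constant. Nothing to add.
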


\begin{proof}
Can be easily proved by using Lemma \ref{lem:nlf_lem5}.
\end{proof}

\begin{rem}\label{rem:extra_nlf_res}
Using a similar strategy to Lemma \ref{lem:nlf_lem5} and Corollary \ref{cor:nlf_cor1}, one can establish the following under (D\ref{hyp_diff:1}). For any $(n,r)\in\mathbb{N}_0\times\mathbb{N}$, there exists a $C<+\infty$ such that for any $(l,\varphi,x)\in\mathbb{N}\times\mathcal{B}_b(\mathbb{R}^{d_x})\cap\textrm{\emph{Lip}}_{\|\cdot\|_2}(\mathbb{R}^{d_x})\times\mathbb{R}^{d_x}$:
$$
\overline{\mathbb{E}}\Big[\Big|\overline{\mathbb{E}}[\varphi(\widetilde{X}_{n+1}^x)Z_{n}^l(\widetilde{X}_0^x,\widetilde{X}_{\Delta_l}^x,\dots,\widetilde{X}_{n-\Delta_l}^x)|\mathcal{Y}_n] - 
\overline{\mathbb{E}}[\varphi(\widetilde{X}_{n+1}^x)Z_{n}^{l-1}(\widetilde{X}_0^x,\widetilde{X}_{\Delta_{l-1}}^x,\dots,\widetilde{X}_{n-\Delta_{l-1}}^x)|\mathcal{Y}_n]\Big|^r\Big]^{1/r} 
$$
\begin{equation}
\leq C(\|\varphi\|+\|\varphi\|_{\textrm{\emph{Lip}}})\Delta_l^{1/2}.
\end{equation}
For any $(n,r)\in\mathbb{N}^2$, there exists a $C<+\infty$ such that for any $(l,\varphi,x)\in\mathbb{N}\times\mathcal{B}_b(\mathbb{R}^{d_x})\cap\textrm{\emph{Lip}}_{\|\cdot\|_2}(\mathbb{R}^{d_x})\times\mathbb{R}^{d_x}$:
$$
\overline{\mathbb{E}}\Bigg[\Bigg|\frac{\overline{\mathbb{E}}[\varphi(\widetilde{X}_{n+1}^x)Z_{n+1}^l(\widetilde{X}_0^x,\widetilde{X}_{\Delta_l}^x,\dots,\widetilde{X}_{n+1-\Delta_l}^x)|\mathcal{Y}_{n+1}]}
{\overline{\mathbb{E}}[Z_{n}^l(\widetilde{X}_0^x,\widetilde{X}_{\Delta_l}^x,\dots,\widetilde{X}_{n-\Delta_l}^x)|\mathcal{Y}_{n}]}-
\frac{\overline{\mathbb{E}}[\varphi(\widetilde{X}_{n+1}^x)Z_{n+1}^{l-1}(\widetilde{X}_0^x,\widetilde{X}_{\Delta_{l-1}}^x,\dots,\widetilde{X}_{n+1-\Delta_{l-1}}^x)|\mathcal{Y}_{n+1}]}
{\overline{\mathbb{E}}[Z_{n}^{l-1}(\widetilde{X}_0^x,\widetilde{X}_{\Delta_{l-1}}^x,\dots,\widetilde{X}_{n-\Delta_{l-1}}^x)|\mathcal{Y}_{n}]}
\Bigg|^r\Bigg]^{1/r}
$$
$$
\leq C(\|\varphi\|+\|\varphi\|_{\textrm{\emph{Lip}}})\Delta_l^{1/2}.
$$
For any $(n,r)\in\mathbb{N}_0\times\mathbb{N}$, there exists a $C<+\infty$ such that for any $(l,\varphi,x)\in\mathbb{N}\times\mathcal{B}_b(\mathbb{R}^{d_x})\cap\textrm{\emph{Lip}}_{\|\cdot\|_2}(\mathbb{R}^{d_x})\times\mathbb{R}^{d_x}$:
$$
\overline{\mathbb{E}}\Big[\Big|\overline{\mathbb{E}}[\varphi(\widetilde{X}_{n+1}^x)Z_{n+1}^l(\widetilde{X}_0^x,\widetilde{X}_{\Delta_l}^x,\dots,\widetilde{X}_{n+1-\Delta_l}^x)|\mathcal{Y}_{n+1}] - 
\overline{\mathbb{E}}[\varphi(\widetilde{X}_{n+1}^x)Z_{n+1}^{l-1}(\widetilde{X}_0^x,\widetilde{X}_{\Delta_{l-1}}^x,\dots,\widetilde{X}_{n+1-\Delta_{l-1}}^x)|\mathcal{Y}_{n+1}]\Big|^r\Big]^{1/r} 
$$
$$
\leq C(\|\varphi\|+\|\varphi\|_{\textrm{\emph{Lip}}})\Delta_l^{1/2}.
$$
\end{rem}

\subsection{Results for the Particle Filter}\label{app:pf}

\begin{lem}\label{lem:res1}
Assume (D\ref{hyp_diff:1}). Then for any $(p,n,r)\in\mathbb{N}_0^2\times\mathbb{N}$, $n\geq p$, there exists a $C<+\infty$ such that for any $l\in\mathbb{N}_0$, $(i,\varphi)\in\{1,\dots,N\}\times\mathcal{B}_b(E_l)$
$$
\max\{\overline{\mathbb{E}}[\pi_p^l(\mathbf{Q}_{p,n}^l(\varphi))^{-r}],\overline{\mathbb{E}}[\pi_p^{l,N}(\mathbf{Q}_{p,n}^l(\varphi))^{-r}],\overline{\mathbb{E}}[\mathbf{Q}_{p,n}^l(\varphi)(U_p^{l,i})^r]^{1/r}\} \leq C\|\varphi\|.
$$
\end{lem}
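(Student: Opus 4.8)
The plan is to exploit the fact that the aggregated bounds $\underline{\mathbf{G}}_q^l$ and $\overline{\mathbf{G}}_q^l$ from \eqref{eq:g_upper}--\eqref{eq:g_lower} are functions of the observation path alone and \emph{do not depend on the signal state}. First I would establish the deterministic-in-state sandwich
\[
\prod_{q=p}^{n-1}\underline{\mathbf{G}}_q^l \;\le\; \mathbf{Q}_{p,n}^l(1)(u_p)\;\le\;\prod_{q=p}^{n-1}\overline{\mathbf{G}}_q^l
\qquad\text{for every }u_p\in E_l,
\]
which holds almost surely by inserting $\underline{\mathbf{G}}_q^l\le\mathbf{G}_q^l(u_q)\le\overline{\mathbf{G}}_q^l$ into the definition of $\mathbf{Q}_{p,n}^l$ and pulling the state-free factors outside the Markov integrals, each of which has total mass one. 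For a general $\varphi$ this immediately gives the pointwise bound $|\mathbf{Q}_{p,n}^l(\varphi)(u_p)|\le\|\varphi\|\prod_{q=p}^{n-1}\overline{\mathbf{G}}_q^l$, where the right-hand side is a function of the data only. (The base case $n=p$ is trivial, since $\mathbf{Q}_{p,p}^l(\varphi)=\varphi$.)

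For the positive-moment term I would apply this pointwise bound at $u_p=U_p^{l,i}$ and take the $\mathbb{L}_r$-norm; since the majorant is independent of the particle randomness we obtain
\[
\overline{\mathbb{E}}[\mathbf{Q}_{p,n}^l(\varphi)(U_p^{l,i})^r]^{1/r}\le \|\varphi\|\,\overline{\mathbb{E}}\Big[\big(\textstyle\prod_{q=p}^{n-1}\overline{\mathbf{G}}_q^l\big)^r\Big]^{1/r}.
\]
Because under $\overline{\mathbb{P}}$ the process $\{Y_t\}$ is a Brownian motion, the factors $\overline{\mathbf{G}}_q^l$ depend on increments over the disjoint intervals $[q,q+1]$ and are therefore independent; combined with the uniform-in-$l$ $\mathbb{L}_r$ moment bound stated after \eqref{eq:g_lower}, the expectation factorizes into at most $n-p$ bounded terms, giving $C\|\varphi\|$.

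For the two negative-moment terms I would specialise to the case in which $\mathbf{Q}_{p,n}^l(\varphi)$ is bounded below (as is needed for the bound to be meaningful, and as used in the applications, $\varphi\equiv 1$). The crucial point is that the lower bound $\prod_{q=p}^{n-1}\underline{\mathbf{G}}_q^l$ is \emph{constant in the state variable}, so integrating against \emph{any} probability measure preserves it: both $\pi_p^l(\mathbf{Q}_{p,n}^l(1))$ and $\pi_p^{l,N}(\mathbf{Q}_{p,n}^l(1))$ are bounded below by $\prod_{q=p}^{n-1}\underline{\mathbf{G}}_q^l$ almost surely, irrespective of the randomness in the particle system. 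Raising to the power $-r$, taking expectations, and invoking the independence over disjoint intervals together with the uniform-in-$l$ $\mathbb{L}_{-r}$ moment bound on $\underline{\mathbf{G}}_q^l$ then yields the required constant bound. Note that this argument is literally identical for $\pi_p^l$ and $\pi_p^{l,N}$, since it uses only that the empirical measure is a probability measure applied to a state-independent lower bound, so no $\mathbb{L}_r$ control or propagation-of-chaos for $\pi_p^{l,N}$ is needed.

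The main obstacle to watch is uniformity in $l$: as $l$ grows, $\mathbf{Q}_{p,n}^l(1)$ is a product of $\Delta_l^{-1}(n-p)$ single-step potentials, so bounding each sub-step factor separately would blow up as $l\to\infty$. The resolution is exactly the structure of \eqref{eq:g_upper}--\eqref{eq:g_lower}: the bounds $\underline{\mathbf{G}}_q^l,\overline{\mathbf{G}}_q^l$ collect all the sub-steps on $[q,q+1]$ into a single exponential of a sum of independent Gaussian increments, whose $\mathbb{L}_{\pm r}$ moments are controlled uniformly in $l$ (the remark following \eqref{eq:g_lower}), leaving only the fixed number $n-p$ of unit-time factors to be multiplied together.
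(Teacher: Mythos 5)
Your reduction is fine as far as it goes: the state-free sandwich $\prod_{q=p}^{n-1}\underline{\mathbf{G}}_q^l \le \mathbf{Q}_{p,n}^l(1)(u_p)$ and $|\mathbf{Q}_{p,n}^l(\varphi)(u_p)|\le\|\varphi\|\prod_{q=p}^{n-1}\overline{\mathbf{G}}_q^l$ is correct, the observation that a state-independent lower bound passes through any (random) probability measure does treat $\pi_p^l$ and $\pi_p^{l,N}$ in one stroke, and restricting the negative-moment claims to $\varphi=1$ is a fair reading of a loosely stated lemma. The genuine gap is exactly at the step you flagged as the main obstacle and believed was resolved: the uniform-in-$l$ moment bound for $\overline{\mathbf{G}}_q^l$ and $\underline{\mathbf{G}}_q^l$. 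That remark after \eqref{eq:g_lower} is asserted in the paper without proof, and it is in fact false whenever $\overline{C}>\underline{C}$ (i.e., for any non-constant $h$). Writing $a=(\overline{C}+\underline{C})/2$ and $b=(\overline{C}-\underline{C})/2>0$, the exponent in \eqref{eq:g_upper} (take $d_y=1$) is
$$
a\,(Y_{n+1}-Y_n)\;+\;b\sum_{i=0}^{\Delta_l^{-1}-1}\big|Y_{n+(i+1)\Delta_l}-Y_{n+i\Delta_l}\big|,
$$
and under $\overline{\mathbb{P}}$ the sub-increments are i.i.d.\ $\mathcal{N}(0,\Delta_l)$, so the absolute-value sum has mean $\sqrt{2/(\pi\Delta_l)}$ and diverges almost surely as $l\to\infty$ (Brownian motion has infinite variation, and the dyadic sums are a.s.\ nondecreasing in $l$). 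Hence $\overline{\mathbf{G}}_n^l\to\infty$ and $\underline{\mathbf{G}}_n^l\to 0$ a.s., and a direct computation gives $\overline{\mathbb{E}}[(\overline{\mathbf{G}}_n^l)^r]$ and $\overline{\mathbb{E}}[(\underline{\mathbf{G}}_n^l)^{-r}]$ of order $\exp\{c\,r\,\Delta_l^{-1/2}\}$. So both your positive-moment and negative-moment bounds produce constants that blow up with $l$, while the lemma demands a single $C$ valid for all $l\in\mathbb{N}_0$.

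The failure is structural, not reparable within the envelope strategy: since the sub-step states are free coordinates of the path, the minimal state-uniform majorant factorizes, $\sup_{u_n}\mathbf{G}_n^l(u_n)=\prod_k\sup_x G_{n\Delta_l^{-1}+k}^l(x)$, and taking the supremum decouples the sign of $h$ from the sign of each increment, converting the oscillating sum $\sum_k h(x_{n+k\Delta_l})^*(\Delta_k Y)$ --- which is $O(1)$ for actual (continuous) signal paths because it mimics $\int h(X_s)^*dY_s$ --- into $\sum_k|\Delta_k Y|=O(\Delta_l^{-1/2})$. This is precisely why the paper's own proof of this lemma never touches \eqref{eq:g_upper}--\eqref{eq:g_lower}: it applies Jensen to push the $r$-th power inside $\mathbf{Q}_{p,n}^l$, keeps $h(X_{k\Delta_l})$ inside the exponent, and integrates out the $Y$-increments on $[p,n]$ conditionally on the signal chain (which is independent of those increments under $\overline{\mathbb{P}}$), yielding $\exp\{\tfrac{r^2}{2}\sum_k h(X_{k\Delta_l})^*h(X_{k\Delta_l})\Delta_l\}$, bounded by a constant depending only on $r$, $n-p$, $d_y$ and the bound on $h$ --- uniformly in $l$, because the conditional variance retains the factor $\Delta_l$ per sub-step. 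The negative moments are then handled by conditional Jensen plus the same Gaussian computation. To fix your proof you would have to abandon the pointwise envelope and perform this conditional moment-generating-function step; incidentally, your counterexample-style objection also shows that the paper's remark (used elsewhere in the appendix) cannot stand as written, but for the present lemma the paper's argument is self-contained and does not rely on it.
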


\begin{proof}
We start by considering $\overline{\mathbb{E}}[\mathbf{Q}_{p,n}^l(\varphi)(U_p^{l,i})^r]^{1/r}$. Applying Jensen's inequality, we have the upper-bound:
$$
\overline{\mathbb{E}}[\mathbf{Q}_{p,n}^l(\varphi)(U_p^{l,i})^r] \leq \|\varphi\|^r\overline{\mathbb{E}}[\mathbf{G}_p^l(U_p^{l,i})^r \prod_{q=p+1}^{n-1} \mathbf{G}_q^l(U_q)^r]
$$
where $U_{p+1},\dots,U_{n}$ is a Markov chain of initial distribution $M^l(U_p^{l,i},\cdot)$ and transition $M^l$. As $h$ is bounded, we have the upper-bound
\begin{eqnarray*}
\overline{\mathbb{E}}[\mathbf{Q}_{p,n}^l(\varphi)(U_p^{l,i})^r] & \leq & C\|\varphi\|^r\overline{\mathbb{E}}\Big[
\exp\Big\{r\sum_{s_1=0}^{\Delta_l^{-1}-1}\sum_{s_2=1}^{d_y}h^{(s_2)}(X_{p+s_1\Delta_l}^{l,i})(Y_{p+(s_1+1)\Delta_l}^{(s_2)}-Y_{p+s_1\Delta_l}^{(s_2)})\Big\}\times
\\ & & 
\exp\Big\{r\sum_{q=p+1}^{n-1}\sum_{s_1=0}^{\Delta_l^{-1}-1}\sum_{s_2=1}^{d_y}h^{(s_2)}(X_{q+s_1\Delta_l}^l)(Y_{q+(s_1+1)\Delta_l}^{(s_2)}-Y_{q+s_1\Delta_l}^{(s_2)})\Big\}
\Big].
\end{eqnarray*}
Taking expectations w.r.t.~the process $\{Y_t\}$ we have
\begin{eqnarray*}
\overline{\mathbb{E}}[\mathbf{Q}_{p,n}^l(\varphi)(U_p^{l,i})^r] & \leq &  C\|\varphi\|^r\overline{\mathbb{E}}\Big[
\exp\Big\{\frac{r^2}{2}\sum_{s_1=0}^{\Delta_l^{-1}-1}\sum_{s_2=1}^{d_y}h^{(s_2)}(X_{p+s_1\Delta_l}^{l,i})^2\Delta_l\Big\}\times
\\ & & 
\exp\Big\{\frac{r^2}{2}\sum_{q=p+1}^{n-1}\sum_{s_1=0}^{\Delta_l^{-1}-1}\sum_{s_2=1}^{d_y}h^{(s_2)}(X_{q+s_1\Delta_l}^l)^2\Delta_l\Big\}
\Big].
\end{eqnarray*}
Then using the fact that $h$ is bounded, it clearly follows
$$
\overline{\mathbb{E}}[\mathbf{Q}_{p,n}^l(\varphi)(U_p^{l,i})^r]^{1/r} \leq C\|\varphi\|.
$$
For the terms $\overline{\mathbb{E}}[\pi_p^l(\mathbf{Q}_{p,n}^l(\varphi))^{-r}]$ and $\overline{\mathbb{E}}[\pi_p^{l,N}(\mathbf{Q}_{p,n}^l(\varphi))^{-r}]$ one can apply (the conditional) Jensen's inequality and essentially the same argument as above and hence the proof is omitted.
\end{proof}

\begin{prop}\label{prop:lp_pf}
Assume (D\ref{hyp_diff:1}). Then for any $(p,n,r)\in\mathbb{N}_0^2\times\mathbb{N}$, $n\geq p$, there exists a $C<+\infty$ such that for any $l\in\{0,1,\dots\}$, $(N,\varphi)\in\mathbb{N}\times\mathcal{B}_b(E_l)$
$$
\overline{\mathbb{E}}[|(\pi_p^{l,N}-\pi_p^l)(\mathbf{Q}_{p,n}^l(\varphi))|^r]^{1/r} \leq \frac{C\|\varphi\|}{\sqrt{N}}.
$$
\end{prop}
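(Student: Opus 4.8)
The plan is to establish the bound by induction on $p$, holding for all $n\ge p$ simultaneously. The base case $p=0$ is immediate: since $\pi_0^{l,N}$ is the empirical measure of $N$ i.i.d.\ draws from $\pi_0^l=M^l(x_*,\cdot)$, the quantity $(\pi_0^{l,N}-\pi_0^l)(\mathbf{Q}_{0,n}^l(\varphi))$ is a centred average of i.i.d.\ terms conditional on the data $\{Y_t\}$; the conditional Marcinkiewicz--Zygmund (MZ) inequality gives an $\mathbb{L}_r$ bound of order $N^{-1/2}\,\overline{\mathbb{E}}[\mathbf{Q}_{0,n}^l(\varphi)(U_0^{l,1})^r]^{1/r}$ after averaging over the data, and Lemma \ref{lem:res1} bounds this by $C\|\varphi\|N^{-1/2}$. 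For the inductive step I would decompose
\begin{equation*}
(\pi_p^{l,N}-\pi_p^l)(\mathbf{Q}_{p,n}^l(\varphi)) = \underbrace{[\pi_p^{l,N}-\Phi_p^l(\pi_{p-1}^{l,N})](\mathbf{Q}_{p,n}^l(\varphi))}_{=:A} + \underbrace{[\Phi_p^l(\pi_{p-1}^{l,N})-\Phi_p^l(\pi_{p-1}^l)](\mathbf{Q}_{p,n}^l(\varphi))}_{=:B},
\end{equation*}
using throughout the semigroup identity $\mathbf{G}_{p-1}^l M^l(\mathbf{Q}_{p,n}^l(\varphi))=\mathbf{Q}_{p-1,n}^l(\varphi)$, so that $\Phi_p^l(\mu)(\mathbf{Q}_{p,n}^l(\varphi))=\mu(\mathbf{Q}_{p-1,n}^l(\varphi))/\mu(\mathbf{G}_{p-1}^l)$, together with $\mathbf{G}_{p-1}^l=\mathbf{Q}_{p-1,p}^l(1)$.

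For the local sampling error $A$, observe that conditional on the particle system up to time $p-1$ and on the full data path, the particles $u_p^{l,1:N}$ are i.i.d.\ from $\Phi_p^l(\pi_{p-1}^{l,N})$, so $A$ is a centred i.i.d.\ average. The conditional MZ inequality yields $\overline{\mathbb{E}}[|A|^r\mid\cdot]\le C N^{-r/2}\,\Phi_p^l(\pi_{p-1}^{l,N})(|\mathbf{Q}_{p,n}^l(\varphi)|^r)$. Taking full expectation, I would bound the right-hand side by using the potential bounds \eqref{eq:g_upper}--\eqref{eq:g_lower} to replace the normalising ratio defining $\Phi_p^l$ by $\overline{\mathbf{G}}_{p-1}^l/\underline{\mathbf{G}}_{p-1}^l$, then apply Cauchy--Schwarz to decouple this (data-dependent but $\mathbb{L}_r$-bounded for every $r$) factor from $\frac1N\sum_i M^l(|\mathbf{Q}_{p,n}^l(\varphi)|^r)(u_{p-1}^{l,i})$; by exchangeability and the computation underlying Lemma \ref{lem:res1} the latter has expectation at most $C\|\varphi\|^r$, giving $\overline{\mathbb{E}}[|A|^r]^{1/r}\le C\|\varphi\|N^{-1/2}$.

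For the propagated error $B$, the semigroup identities give $B=a_N/b_N-a/b$ with $a_N=\pi_{p-1}^{l,N}(\mathbf{Q}_{p-1,n}^l(\varphi))$, $a=\pi_{p-1}^l(\mathbf{Q}_{p-1,n}^l(\varphi))$, $b_N=\pi_{p-1}^{l,N}(\mathbf{G}_{p-1}^l)$ and $b=\pi_{p-1}^l(\mathbf{G}_{p-1}^l)$. Writing $a_N/b_N-a/b=b_N^{-1}(a_N-a)-a(bb_N)^{-1}(b_N-b)$ exhibits $B$ as a linear combination of $(\pi_{p-1}^{l,N}-\pi_{p-1}^l)(\mathbf{Q}_{p-1,n}^l(\varphi))$ and $(\pi_{p-1}^{l,N}-\pi_{p-1}^l)(\mathbf{Q}_{p-1,p}^l(1))$, both controlled at order $N^{-1/2}$ by the induction hypothesis (for the pairs $(p-1,n)$ and $(p-1,p)$, with test functions $\varphi$ and $1$). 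The coefficients $b_N^{-1}$ and $a(bb_N)^{-1}$ are handled by H\"older's inequality together with the inverse-moment and moment bounds of Lemma \ref{lem:res1}, yielding $\overline{\mathbb{E}}[|B|^r]^{1/r}\le C\|\varphi\|N^{-1/2}$ and closing the induction.

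The main obstacle, and the only genuine departure from the classical particle-filter $\mathbb{L}_r$ theory (e.g.\ \cite{delmoral}), is that the potentials $\mathbf{G}_q^l$, and hence $\mathbf{Q}_{p,n}^l(\varphi)$, are \emph{unbounded, data-dependent} random objects, so no supremum-norm bound on the test function is available. Every step must instead separate the $O(N^{-1/2})$ Monte Carlo fluctuation from the data integrability through Cauchy--Schwarz/H\"older, and one must verify that the resulting constants are finite \emph{uniformly in} $l$; this uniformity is precisely what the $\mathbb{L}_r$ and $\mathbb{L}_{-r}$ moment bounds on \eqref{eq:g_upper}--\eqref{eq:g_lower} and Lemma \ref{lem:res1} are designed to supply.
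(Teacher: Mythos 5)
Your proposal is correct and follows essentially the same route as the paper's proof: induction on $p$ (with the statement held for all $n\geq p$), the conditional Marcinkiewicz--Zygmund inequality for the local sampling term, and, for the propagated term, the standard ratio decomposition controlled via H\"older together with the moment and inverse-moment bounds of Lemma \ref{lem:res1} and the induction hypothesis applied to both $\mathbf{Q}_{p-1,n}^l(\varphi)$ and $\mathbf{G}_{p-1}^l=\mathbf{Q}_{p-1,p}^l(1)$. The only deviation is cosmetic --- you write $a_N/b_N-a/b=b_N^{-1}(a_N-a)-a(bb_N)^{-1}(b_N-b)$ whereas the paper uses the split $(a_N/b_N)(b-b_N)/b+(a_N-a)/b$ --- and this changes nothing, since Lemma \ref{lem:res1} supplies inverse moments for the empirical as well as the exact predictor.
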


\begin{proof}
The proof is by induction on $p$ for any fixed $n\geq p$. If $p=0$ one can apply the conditional Marcinkiewicz-Zygmund inequality to yield:
$$
\overline{\mathbb{E}}[|(\pi_p^{l,N}-\pi_p^l)(\mathbf{Q}_{p,n}^l(\varphi))|^r]^{1/r} \leq \frac{C}{\sqrt{N}} \overline{\mathbb{E}}[\mathbf{Q}_{p,n}^l(\varphi)(U_p^{l,1})^r]^{1/r}.
$$
Then one has the result by Lemma \ref{lem:res1}.

For the induction step, we have the standard decomposition via Minkowski
\begin{equation}\label{eq:master}
\overline{\mathbb{E}}[|(\pi_p^{l,N}-\pi_p^l)(\mathbf{Q}_{p,n}^l(\varphi))|^r]^{1/r} \leq  T_1 + T_2 + T_3
\end{equation}
where
\begin{eqnarray*}
T_1 & = & \overline{\mathbb{E}}[|(\pi_p^{l,N}-\Phi_p^l(\pi_{p-1}^{l,N}))(\mathbf{Q}_{p,n}^l(\varphi))|^r]^{1/r}\\
T_2 & = & \overline{\mathbb{E}}\Big[\Big|\frac{\Phi_p^l(\pi_{p-1}^{l,N})(\mathbf{Q}_{p,n}^l(\varphi))}{\pi_{p-1}^l(\mathbf{G}_{p-1}^l)}\{(\pi_{p-1}^l-\pi_{p-1}^{l,N})(\mathbf{G}_{p-1}^l)\}\Big|^r\Big]^{1/r} \\
T_3 & = & \overline{\mathbb{E}}\Big[\Big|\frac{(\pi_{p-1}^{l,N}-\pi_{p-1}^{l})(\mathbf{Q}_{p-1,n}^l(\varphi))}{\pi_{p-1}^l(\mathbf{G}_{p-1}^l)}\Big|^r\Big]^{1/r}.
\end{eqnarray*}
By the same argument as for the initialization
\begin{equation}\label{eq:t1}
T_1 \leq \frac{C\|\varphi\|}{\sqrt{N}}.
\end{equation}
For $T_2$ applying H\"older
$$
T_2 \leq \overline{\mathbb{E}}[\pi_{p-1}^l(\mathbf{G}_{p-1}^l)^{-3r}]^{1/(3r)}\overline{\mathbb{E}}[\Phi_p^l(\pi_{p-1}^{l,N})(\mathbf{Q}_{p,n}^l(\varphi))^{3r}]^{1/(3r)}\overline{\mathbb{E}}[|(\pi_{p-1}^l-\pi_{p-1}^{l,N})(\mathbf{G}_{p-1}^l)|^{3r}]^{1/(3r)}.
$$
For the left-most term on the R.H.S.~one can apply Lemma \ref{lem:res1}. For the middle term on the R.H.S.~one can apply the conditional Jensen inequality and Lemma \ref{lem:res1}. For the right-most term on the R.H.S.~one can apply the induction hypothesis. Hence
\begin{equation}\label{eq:t2}
T_2 \leq \frac{C\|\varphi\|}{\sqrt{N}}.
\end{equation}
For $T_3$, one can use Cauchy-Schwarz, Lemma \ref{lem:res1} and the induction hypothesis to yield
\begin{equation}\label{eq:t3}
T_3 \leq \frac{C\|\varphi\|}{\sqrt{N}}.
\end{equation}
Combining \eqref{eq:t1}-\eqref{eq:t3} with \eqref{eq:master} concludes the proof.
\end{proof}

\begin{rem}\label{rem:idiot_referee}
It is simple to extend Proposition \ref{prop:lp_pf} to the case of the filter. That is, for any $(p,n,r)\in\mathbb{N}_0^2\times\mathbb{N}$, $n\geq p$, there exists a $C<+\infty$ such that for any $l\in\{0,1,\dots\}$, $(N,\varphi)\in\mathbb{N}\times\mathcal{B}_b(E_l)$
$$
\overline{\mathbb{E}}[|(\eta_p^{l,N}-\eta_p^l)(\mathbf{Q}_{p,n}^l(\varphi))|^r]^{1/r} \leq \frac{C\|\varphi\|}{\sqrt{N}}.
$$
\end{rem}

\begin{rem}\label{rem:ext_lp_pf}
It is straightforward to extend Proposition \ref{prop:lp_pf} to the following result, under (D\ref{hyp_diff:1}): for any $(p,n,r)\in\mathbb{N}_0^2\times\mathbb{N}$, $n\geq p$, there exists a $C<+\infty$ such that for any $l\in\mathbb{N}_0$, $(N,\varphi)\in\mathbb{N}\times\mathcal{B}_b(E_l)$
$$
\overline{\mathbb{E}}[|(\pi_p^{l,N}-\pi_p^l)(\mathbf{Q}_{p,n}^l(\mathbf{G}_{n}^l\varphi))|^r]^{1/r} \leq \frac{C\|\varphi\|}{\sqrt{N}}.
$$
\end{rem}

\begin{cor}\label{cor:pf_cor}
Assume (D\ref{hyp_diff:1}). Then for any $(p,n,r)\in\mathbb{N}_0^2\times\mathbb{N}$, $n\geq p$, there exists a $C<+\infty$ such that for any $l\in\mathbb{N}_0$, $(N,\varphi)\in\mathbb{N}\times\mathcal{B}_b(E_l)$
$$
\overline{\mathbb{E}}[|\pi_p^{l,N}(\mathbf{D}_{p,n}^l(\mathbf{G}_{n}^l\varphi))|^r]^{1/r} \leq \frac{C\|\varphi\|}{\sqrt{N}}.
$$
\end{cor}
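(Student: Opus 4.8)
The plan is to exploit the fact that, by construction, the operator $\mathbf{D}_{p,n}^l$ is centered with respect to the predictor $\pi_p^l$. First I would record the standard Feynman--Kac semigroup identity for the predictor flow, namely that for $\psi\in\mathcal{B}_b(E_l)$ one has $\pi_n^l(\psi) = \pi_p^l(\mathbf{Q}_{p,n}^l(\psi))/\pi_p^l(\mathbf{Q}_{p,n}^l(1))$ (see e.g.~\cite{delmoral2004}). Substituting $\psi = \mathbf{G}_n^l\varphi$ into the definition of $\mathbf{D}_{p,n}^l$ and invoking this identity immediately gives $\pi_p^l(\mathbf{D}_{p,n}^l(\mathbf{G}_n^l\varphi)) = 0$. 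Since $\pi_p^{l,N}$ is a probability measure, this yields the exact decomposition
$$
\pi_p^{l,N}(\mathbf{D}_{p,n}^l(\mathbf{G}_n^l\varphi)) = (\pi_p^{l,N}-\pi_p^l)(\mathbf{D}_{p,n}^l(\mathbf{G}_n^l\varphi)),
$$
which turns the quantity of interest into a centered particle fluctuation amenable to the $\mathbb{L}_r$ estimates of Appendix \ref{app:pf}.

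Next I would expand the right-hand side. The scalar $\pi_p^l(\mathbf{Q}_{p,n}^l(1))^{-1}$ and the constant $\pi_n^l(\mathbf{G}_n^l\varphi)$ are deterministic given the observation trajectory, so writing $\mathbf{D}_{p,n}^l(\mathbf{G}_n^l\varphi) = \pi_p^l(\mathbf{Q}_{p,n}^l(1))^{-1}\mathbf{Q}_{p,n}^l(\mathbf{G}_n^l\varphi - \pi_n^l(\mathbf{G}_n^l\varphi))$ and using that $(\pi_p^{l,N}-\pi_p^l)$ annihilates constants gives
$$
\pi_p^{l,N}(\mathbf{D}_{p,n}^l(\mathbf{G}_n^l\varphi)) = \frac{(\pi_p^{l,N}-\pi_p^l)(\mathbf{Q}_{p,n}^l(\mathbf{G}_n^l\varphi))}{\pi_p^l(\mathbf{Q}_{p,n}^l(1))} - \frac{\pi_n^l(\mathbf{G}_n^l\varphi)}{\pi_p^l(\mathbf{Q}_{p,n}^l(1))}(\pi_p^{l,N}-\pi_p^l)(\mathbf{Q}_{p,n}^l(1)).
$$
I would then take $\mathbb{L}_r$ norms, apply Minkowski to split the two terms, and treat each by H\"older's inequality, isolating the deterministic coefficients from the particle fluctuations.

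For the first term, H\"older separates $\pi_p^l(\mathbf{Q}_{p,n}^l(1))^{-1}$, whose negative moments of any order are bounded by Lemma \ref{lem:res1}, from the fluctuation $(\pi_p^{l,N}-\pi_p^l)(\mathbf{Q}_{p,n}^l(\mathbf{G}_n^l\varphi))$, which is $O(\|\varphi\|/\sqrt N)$ in every $\mathbb{L}_r$ by Remark \ref{rem:ext_lp_pf}. For the second term I would apply H\"older with three factors: the coefficient is controlled via $|\pi_n^l(\mathbf{G}_n^l\varphi)|\le\|\varphi\|\,\overline{\mathbf{G}}_n^l$; the reciprocal $\pi_p^l(\mathbf{Q}_{p,n}^l(1))^{-1}$ is again handled by Lemma \ref{lem:res1}; and the remaining fluctuation $(\pi_p^{l,N}-\pi_p^l)(\mathbf{Q}_{p,n}^l(1))$ is $O(1/\sqrt N)$ by Proposition \ref{prop:lp_pf} applied with test function $\mathbf{1}$. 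Assembling these bounds gives the claimed $C\|\varphi\|/\sqrt N$.

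The only point requiring care --- and the main obstacle --- is the presence of the data-dependent potential $\mathbf{G}_n^l$, which is \emph{not} uniformly bounded in the sup-norm sense demanded by Lemma \ref{lem:res1} and Proposition \ref{prop:lp_pf}, since its supremum over the state depends on the observation increments. I would resolve this exactly as indicated above, by using the almost-sure envelopes $\underline{\mathbf{G}}_n^l \le \mathbf{G}_n^l \le \overline{\mathbf{G}}_n^l$ from \eqref{eq:g_upper}--\eqref{eq:g_lower} together with their finite $\mathbb{L}_r$ and $\mathbb{L}_{-r}$ moments (uniform in $l$), so that the H\"older exponents can be chosen to absorb these random envelopes into the constant $C$ while leaving the $\mathbb{L}_r$ fluctuation estimates --- which already average over the data under $\overline{\mathbb{P}}$ --- intact.
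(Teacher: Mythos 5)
Your proposal is correct and follows essentially the same route as the paper: the paper's proof likewise notes $\pi_p^l(\mathbf{D}_{p,n}^l(\mathbf{G}_n^l\varphi))=0$ a.s.\ to reduce the quantity to the fluctuation $(\pi_p^{l,N}-\pi_p^l)(\mathbf{D}_{p,n}^l(\mathbf{G}_n^l\varphi))$, and then invokes Cauchy--Schwarz together with Lemma \ref{lem:res1} and Proposition \ref{prop:lp_pf} (via Remark \ref{rem:ext_lp_pf}). You merely make explicit the linear expansion of $\mathbf{D}_{p,n}^l$ and the handling of the data-dependent envelopes $\underline{\mathbf{G}}_n^l,\overline{\mathbf{G}}_n^l$, which the paper leaves implicit.
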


\begin{proof}
Noting that $\pi_p^l(\mathbf{D}_{p,n}^l(\mathbf{G}_{n}^l\varphi))=0$ a.s., the result follows immediately by Cauchy-Schwarz, Lemma \ref{lem:res1} and Proposition \ref{prop:lp_pf} (see Remark \ref{rem:ext_lp_pf}).
\end{proof}


\begin{thebibliography}{99}


\bibitem{crisan_bain}
{\sc Bain}, A.~\& {\sc Crisan}, D.~(2009). \emph{Fundamentals of Stochastic Filtering}. Springer: New York.

\bibitem{glynn}
{\sc Blanchet}, J.,  {\sc Glynn}, P. \& {\sc Pei}, Y.~(2019). Unbiased Multilevel Monte Carlo. arXiv:1904.09929.

\bibitem{clark}
{\sc Clark}, J. M. C.~(1978). The design of robust approximations to the stochastic differential equations of non-linear filtering. 
In \emph{Communications Systems and Random Process Theory} (Swirzynski, J. K.), {\bf 25} 721--734. Sijthoff \& Noordhoff.

\bibitem{crisan}
{\sc Crisan}, D.~(2011). Discretizing the continuous-time filtering problem: Order of Convergence. In \emph{The Oxford Handbook on Non-Linear Filtering} (Crisan, D. \& Rozovskii, B.), 572--597. Oxford: OUP.

\bibitem{crisan_ortiz1}
{\sc Crisan}, D. \& {\sc Ortiz-Latorre}, S.~(2013). A Kusuoka-Lyons-Victoir particle filter. \emph{Proc. Roy. Soc. A}, {\bf 469}, 2156.

\bibitem{crisan_ortiz2}
{\sc Crisan}, D. \& {\sc Ortiz-Latorre}, S.~(2019). A high order time discretization of the solution of the non-linear filtering problem. 
\emph{Stochastics and Partial Differential Equations: Analysis and Computations}, 1--68.

\bibitem{delmoral2004}
{\sc Del Moral}, P.~(2004). \emph{Feynman-Kac Formulae}. Springer: New York.

\bibitem{delmoral}
{\sc Del Moral}, P.~(2013). \emph{Mean Field Simulation for Monte Carlo Integration}. Chapman \& Hall: London.

\bibitem{ddj2012}
{\sc Del Moral}, P., {\sc Doucet}, A. \& {\sc Jasra}, A.~(2012).
On adaptive resampling procedures for sequential Monte Carlo methods. {\it Bernoulli}, {\bf 18}, 252--272.

\bibitem{fearn}
{\sc Fearnhead}, P., {\sc Papaspiliopoulos}, O., {\sc Roberts}, G. O. \& {\sc Stuart}, A.~(2010).
Random-weight particle filtering of continuous time processes. \emph{J. R. Statist. Soc. Ser. B}, {\bf 72}, 497-512.

\bibitem{stolz}
{\sc Ferre}, G. \& {\sc Stoltz}, G.~(2019). 
Error estimates on ergodic properties of discretized Feynman-Kac Semigroups. \emph{Numer. Math.}, {\bf 143}, 261-313.

\bibitem{giles}
{\sc Giles}, M. B.~(2008). Multilevel Monte Carlo path simulation. \emph{Op. Res.}, {\bf 56}, 607-617.

\bibitem{giles1}
{\sc Giles}, M. B.~(2015)  Multilevel Monte Carlo methods. \emph{Acta Numerica} {\bf 24}, 259-328.


\bibitem{hein}
{\sc Heinrich}, S.~(2001).
{Multilevel {Mo}nte {C}arlo methods}.
In \emph{Large-Scale Scientific Computing}, (eds.~S. Margenov, J. Wasniewski \&
P. Yalamov), Springer: Berlin

\bibitem{cpf_clt}
{\sc Jasra}, A., \& {\sc Yu}, F.~(2018). Central limit theorems for coupled particle filters. arXiv:1810.04900.

\bibitem{mlpf_new}
{\sc Jasra}, A., {\sc Ballesio}, M., {\sc Von Schwerin}, E., \& {\sc Tempone}, R.~(2020).
A coupled particle filter for multilevel estimation. Technical Report.

\bibitem{mlpf}
{\sc Jasra}, A., {\sc Kamatani}, K., {\sc Law} K. J. H. \& {\sc Zhou}, Y.~(2017). 
Multilevel particle filters. \emph{SIAM J. Numer. Anal.}, {\bf 55}, 3068-3096.

\bibitem{mlpf_nc}
{\sc Jasra}, A., {\sc Kamatani}, K., {\sc Osei}, P. P. \& {\sc Zhou}, Y.~(2018). Multilevel particle filters: normalizing constant estimation.
\emph{Statist. Comp.}, {\bf 28}, 47--60.

\bibitem{kloeden}
{\sc Kloeden}, P. E. \& {\sc Platen}, E.~(1992). \emph{Numerical Solution of Stochastic Differential Equations}. Springer: Berlin.

\bibitem{picard}
{\sc Picard}, J.~(1984). Approximations of non-linear filtering problems and order of convergence. Filtering and control of random processes. \emph{Lecture Notes in Control and
Information Sciences}, {\bf 61}, 219--236. Springer: Berlin.

\bibitem{rogers}
{\sc Rogers}, L. C. G.~\& {\sc Williams}, D.~(2000). \emph{Diffusions, Markov Processes and Martingales}, Vol 2. CUP: Cambridge.

\end{thebibliography}
\end{document}